\setlist[enumerate]{leftmargin=.5in}
\setlist[itemize]{leftmargin=.5in}
\crefname{hypothesis}{Hypothesis}{Hypotheses}
\title{
Gaussian mixture Taylor approximations of risk measures constrained by PDEs with Gaussian random field inputs
\thanks{
\funding{This work support by the US National Science Foundation under awards DMS-2245674 and DMS-2245111.}}
}
\author{Dingcheng Luo\thanks{Oden Institute for Computational Engineering and Sciences, The University of Texas at Austin, Austin, TX
  (\email{dc.luo@utexas.edu}, \email{joshuawchen@utexas.edu}).}
\and Joshua Chen\footnotemark[2]
\and Peng Chen\thanks{School of Computational Science and Engineering, Georgia Institute of Technology, Atlanta, GA
  (\email{pchen402@gatech.edu})}
\and Omar Ghattas\thanks{Walker Department of Mechanical Engineering and Oden Institute for Computational Engineering and Sciences, The University of Texas at Austin, Austin, TX (\email{omar@oden.utexas.edu})}
}
\begin{document} 
\maketitle
\begin{abstract}
This work considers the computation of risk measures for quantities of interest governed by PDEs with Gaussian random field parameters using Taylor approximations. 
While efficient, Taylor approximations are local to the point of expansion, 
and hence may degrade in accuracy when the variances of the input parameters are large. 
To address this challenge, we approximate the underlying Gaussian measure by a mixture of Gaussians with reduced variance in a dominant direction of parameter space. 
Taylor approximations are constructed at the means of each Gaussian mixture component, which are then combined to approximate the risk measures. 
The formulation is presented in the setting of infinite-dimensional Gaussian random parameters for risk measures including the mean, variance, and conditional value-at-risk. 
We also provide detailed analysis of the approximations errors arising from two sources: the Gaussian mixture approximation and the Taylor approximations.
Numerical experiments are conducted for a semilinear advection-diffusion-reaction equation with a random diffusion coefficient field and for the Helmholtz equation with a random wave speed field. 
For these examples, the proposed approximation strategy can achieve less than $1\%$ relative error in estimating CVaR with only $\mathcal{O}(10)$ state PDE solves, which is comparable to a standard Monte Carlo estimate with $\mathcal{O}(10^4)$ samples, 
thus achieving significant reduction in computational cost.
The proposed method can therefore serve as a way to rapidly and accurately estimate risk measures under limited computational budgets.
\end{abstract}

\begin{keywords}
    Taylor approximation, PDE, uncertainty quantification, risk measure, 
    Gaussian random field, Gaussian mixture
\end{keywords}

\begin{MSCcodes}
    35R60, 41A30, 65C20, 65D32, 68U05
\end{MSCcodes}


\section{Introduction}
Computational models described by partial differential equations (PDEs) have become indispensable tools for the analysis of physical systems. 
The PDE models are frequently used to make predictions about \textit{quantites of interest} (QoIs) of the systems to guide decision-making. 
However, the ubiquity of uncertainty in such systems, e.g. arising from uncertain model parameters, such as initial/boundary conditions, material coefficients, and sources, demands that predictions of the QoIs are equipped with a measure of their uncertainty.
This can be quantified using risk measures, which are statistical quantities that summarize the probability distribution of the QoIs subject to the uncertainty in the model parameters, and are typically defined in terms of expectations over the distribution. 
Risk measures play important roles in risk-informed decision-making as well as engineering applications such as risk-based design optimization, where the goal is to minimize the risk measure of a QoI with respect to the design variables, potentially subject to constraints on certain failure probabilities. 

In this work, we consider the estimation of risk measures given PDEs with uncertain parameters that are Gaussian random fields. 
As a single evaluation of the QoI requires solving the underlying PDE, sample-based estimates may be computationally expensive as it can require solving a large number of PDEs, especially for risk measures that emphasize tail events, such as the conditional value-at-risk (CVaR). 
Moreover, since the uncertain parameters are formally infinite dimensional (arbitrarily high dimensional upon discretization), 
deterministic quadrature methods can suffer from the curse of dimensionality.

Taylor approximations of the parameter-to-QoI map have commonly been used in uncertainty quantification (UQ), as in perturbation methods for computing moments, and the first-order and second-order reliability method (FORM/SORM) for computing failure probabilities \cite{KiureghianLinHwang87,Rackwitz01, MelchersBeck18}. 
By exploiting local derivative information, these methods are often able to produce accurate estimates of the risk measures at the cost of a single evaluation of the QoI and its derivatives. 
For this reason, they have been widely applied to risk/reliability-based design optimization, \cite{TuChoiPark99, ChiralaksanakulMahadevan04, YounChoi04}, and more recently, to PDE-modeled systems with random parameter fields \cite{AlexanderianPetraStadlerEtAl17, ChenVillaGhattas19, ChenGhattas21},
since the risk measures or failure probabilities typically need to be evaluated numerous times during the optimization process.
However, since Taylor approximations only are guaranteed accuracy locally near the point of expansion, the accuracy of the risk measure estimates may degrade for distributions with large variances or functions with high degrees of nonlinearity where the Taylor truncation errors dominates.

Other approaches for estimating risk measures subject to high-dimensional random parameters 
include sparse grid quadrature or collocation \cite{BungartzGriebel04, NobileTemponeWebster08a} and quasi-Monte Carlo methods \cite{GrahamKuoNuyensEtAl11}. 
For sufficiently smooth parameter-to-QoI maps, these have been shown to have favorable convergence properties compared to the standard $\mathcal{O}(N^{-1/2})$ convergence rate of Monte Carlo methods. 
However, these methods may still require a large number of function evaluations to reach the asymptotic regime. 
To reduce the computational complexity, \cite{CastrillonCandasNobileTempone21} combines sparse grids with Taylor approximations for computing expectations of QoIs of PDEs with random geometries, making use of sparse grids for large-variance directions of the input space and taking advantage of Taylor approximations for low-variance input directions. 
Alternatively, the efficiency of Monte Carlo methods can be improved through variance reduction strategies using lower-fidelity evaluations, as in multilevel and multifidelity Monte Carlo methods \cite{NgWillcox14, Giles15, PeherstorferWillcoxGunzburger16, AliUllmannHinze17, GeraciEldredIaccarino17}.

In this work, we develop and analyze the use of \textit{Gaussian mixture Taylor approximations} as a method of approximating the risk measures of PDEs with relatively small computational costs. 
Building on the Taylor approximation for its computational efficiency,
we seek to address the issues with larger variances. 
To this end, we consider approximating the underlying Gaussian measure by a Gaussian mixture with reduced variance along a dominant direction in parameter space. This procedure is studied in \cite{VittaldevRussell16}, and has also been used to improve the accuracy of UQ methods such as Kalman filters \cite{DeMarsBishopJah13}, polynomial chaos expansion \cite{VittaldevRussellLinares16}, 
and Taylor approximations \cite{FossaArmellinDelandeEtAl22}, 
for the propagation of uncertainty in orbital uncertain propagation for spacecrafts. 
Following the strategy in \cite{VittaldevRussell16}, the mixture approximation of the random field can be derived from a Gaussian mixture approximation of the standard Gaussian distribution in 1D. 
In our case, Taylor approximations are then constructed about the mean  of each mixture component with reduced variance.
Estimates of the risk measures are then computed by combining the contributions from the Taylor approximations at each mixture component.
The dominant direction can be selected as the dominant eigenvector of the covariance operator, which is the direction of largest variance. 
Alternatively, as we propose in this work, we can obtain this from the dominant eigenvector of covariance-preconditioned Hessian of the parameter-to-QoI map, which additionally accounts for the nonlinearity of the mapping. 

The method in our work is analogous to that of \cite{FossaArmellinDelandeEtAl22}, which considers the propagation of uncertainty for a Gaussian distribution subject to a nonlinear transformation. The authors also use a Gaussian mixture approximation of the distribution that is recursively created based on one-dimensional decompositions along dominant directions. Subsequently, linear Taylor approximations about the mixture component means are used to propagate the uncertainty to the outputs. In \cite{FossaArmellinDelandeEtAl22}, the decomposition directions are selected to maximize a nonlinearity index, which is computed based on the change in the Jacobian along each input direction. 
While \cite{FossaArmellinDelandeEtAl22} largely focuses on the propagation of uncertainty subject to low-dimensional random vectors for applications in orbital mechanics, 
we instead look to extend the formulation to formally infinite-dimensional parameter spaces.
Moreover, we focus on the computational challenges that arise in the setting of mappings governed by PDEs, and investigate the accuracy of our approximation scheme given a small number of function evaluations, due to the large computational costs of the PDE solves.

As such, the main contributions in this work are as follows. 
We derive expressions for the mean, standard deviation, and CVaR when using Gaussian mixtures with linear and quadratic Taylor approximations. While no simple analytic expression of the CVaR is available when using the quadratic approximation, we present a rapid sampling-based approach using a low-rank approximation of the Hessian operator.
We also analyze the approximation error of the Gaussian measure by the mixture when using the approach considered by \cite{VittaldevRussell16}. This analysis is of broader interest beyond our methodology, 
since the mixture decomposition strategy has been combined with several other UQ methods, as mentioned above.
We then present bounds for the combined approximation errors of the mean and CVaR risk measures in terms of both the mixture and Taylor approximations.
Finally, we investigate the performance of our method using two numerical examples; a semilinear advection-diffusion-reaction (ADR) equation with a random diffusion coefficient field and a scattering problem governed by the Helmholtz equation with a random wave speed field. In particular, our results show that the Gaussian mixture Taylor approximation is able to efficiently and effectively improve upon a single Taylor approximation for estimating the risk measures.

The remainder of the paper is organized as follows. Section~\ref{sec:forward_uq} presents the problem setting and reviews the use of Taylor approximation for UQ of PDEs with random parameter fields. In Section~\ref{sec:taylorgm}, we introduce the computation of the mean, variance, and CVaR using Taylor approximations with Gaussian mixtures. The mixture construction strategy of \cite{VittaldevRussell16} is then presented in Section~\ref{sec:gm} in the language of general Gaussian measures. Section~\ref{sec:analysis} provides an analysis of the mixture approximation error and the combined mixture Taylor approximation errors. Numerical results are then shown in Section~\ref{sec:numerical_results} before concluding with some remarks in Section~\ref{sec:conclusion}.

\section{Forward uncertainty quantification of PDEs}\label{sec:forward_uq}
In this work, we consider the problem of evaluating risk measures for systems governed by PDEs with uncertain parameter fields. We let $m$ denote an uncertain parameter belonging to a separable Hilbert space $\cM$ with inner product $\linner \cdot, \cdot \rinner_{\cM}$. We assume that $m$ has an associated distribution $\nu$, meaning $m$ is an $\cM$-valued random variable $m : (\Omega, \Sigma, \bP) \rightarrow (\cM, \cF, \nu)$. Here, $(\Omega, \Sigma, \mathbb{P})$ is the underlying probability space, $\cF = \cB(\cM)$ is the Borel sigma-algebra on $\cM$, and $\nu$ is therefore the pushforward measure of $\bP$ on $(\cM, \cB(\cM))$. 
In this paper, we focus on the case where $\nu = \cN(\bar{m}, \cC)$ is a Gaussian measure such that $m$ is a Gaussian random field over a bounded spatial domain $\cD \subset \bR^{d}$, where $d$ is the spatial dimension.
The Gaussian measure is defined by its mean $\bar{m} \in \cM = L^2(\cD)$ and a covariance operator $\cC \in \cL(\cM, \cM)$, a self-adjoint, positive trace-class operator, where $\cL(\cM, \cM)$ denotes the space of bounded linear operators from $\cM$ to itself.

As a prototypical example, we will consider the class of Gaussian random fields with 
covariance operators given by inverses of elliptic PDE operators, $\cC = \cA^{-\beta}$, 
where $\cA = \delta -\gamma \Delta$ is defined on the domain $H^2(\cD)$ with either homogeneous Neumann or Robin boundary conditions. 
For $\beta > d/2$, this is a well-defined covariance operator that gives rise to $\tau$-Holder continuous samples for any 
$\tau < \min\{1, \beta-d/2\}$. 
The spatial covariance structure is additionally determined by the parameters $\gamma, \delta > 0$, 
such that the random fields have pointwise variances of $\sigma_x = \sqrt{\gamma \delta}$ and
correlation lengths of $l_x = \sqrt{8(\beta-d/2)\gamma/\delta}$ (i.e.\ the correlation between the values at two points $x_1, x_2 \in \cD$ where $|x_1 - x_2| = l_x$ is approximately 0.1) 
This definition of the covariance operator coincides with that of the Whittle--Mat\'ern covariance model and the explicit link between the two is discussed in \cite{LindgrenRueLindstroem11}. 
In this work, we will focus on the case where $\beta = 2$, which corresponds to a widely used class of covariance operators,
but make the note that the methodologies presented are applicable to general covariance operators provided one can numerically apply the operator $\cC$ and its (numerical) inverse. 

The state variable $u$, belonging to a separable Hilbert space $\cU$, depends on the uncertain parameter field $m$ by a PDE, which we write as
\begin{equation} 
	\cR(u, m) = 0, 
\end{equation}
where $\cR : \cU \times \cM \rightarrow \cV'$ is a differential operator as a map from the product space $\cU \times \cM$ to the dual $\cV'$ of the test space $\cV$, another separable Hilbert space. 
In particular, we identify $\cV'$ with $\cV$ using the Riesz map.
We assume that the PDE is well-posed, admitting a solution operator $u = u(m)$. 
Computationally, the function spaces need to be discretized (e.g. by finite element bases), in which case $\cM$, $\cU$ and $\cV$ become finite but high-dimensional vector spaces, and $m \sim \cN(\bar{m}, \cC)$ is a high-dimensional Gaussian random vector. 

In many applications, specific quantities, such as pointwise observations and integral quantities, are often of primary interest. 
We let $q : \cM \times \cU \rightarrow \bR$ denote such a quantity of interest. Since the state depends on $m$ through the PDE, we can also consider the parameter-to-QoI map 
$Q : \cM \ni m \rightarrow q(u(m), m) \in \bR$, which is assumed to be $k$ times continuously Fr\'echet differentiable for some $k > 2$.
The QoI is thus a random variable distributed as the pushforward of $\nu$ under $Q$.
We are interested in quantifying the uncertainty of $Q$ through risk measures, which are essentially statistical quantities of the distribution of $Q$. Simple examples include 
the expectation $\bE_{\nu}[Q]$ 
and standard deviation $\var_{\nu}^{1/2}[Q] = \left(\bE_{\nu}[Q^2] - \bE_{\nu}[Q]^2\right)^{1/2}$. 
Additionally, we consider a more complex risk measure, the conditional value-at-risk (CVaR). The CVaR is of importance for optimization under uncertainty and reliability-based engineering design (\cite{RockafellarUryasev00, KouriSurowiec16, ChaudhuriKramerNortonEtAl21}) due to its favorable mathematical properties, including coherence as a risk measure (as in \cite{ArtznerDelbaenEberEtAl99}) and conservativeness (see \cite{ChaudhuriKramerNortonEtAl21}). 
The CVaR (or superquantile) is defined for a particular percentile $\alpha \in [0,1]$ as 
\begin{equation}\label{eq:cvar_min}
	\CVaR_{\alpha}[Q] = \min_{t \in \bR} t + \frac{1}{1-\alpha} \bE_{\nu}[(Q - t)^+].
\end{equation}
When $Q$ is continously distributed, this can be interpreted as the conditional expectation of $Q$ given that it exceeds its $\alpha$-value-at-risk ($\alpha$-VaR, or $\alpha$-quantile) \cite{ShapiroDentchevaRuszczynski09}. 
In particular, the $\alpha$-VaR is given by 
\begin{equation}
	\VaR_{\alpha}[Q] = \inf \{t \in \bR  : \bP[Q \leq t] \geq \alpha \} = F_Q^{-1}(\alpha),
\end{equation}
where $F_Q$ is the cumulative distribution function of $Q$.
Then, the $\alpha$-CVaR is
\begin{equation}\label{eq:cvar}
	\CVaR_{\alpha}[Q] = \VaR_{\alpha}[Q] + \frac{1}{1-\alpha} \bE_{\nu}[(Q - \VaR_{\alpha}[Q])^+],
\end{equation}
where $(\cdot)^+ = \max(\cdot, 0)$ is the maximum function. It follows from the definition that 
$\CVaR_{0}[Q] = \bE_{\nu}[Q]$ is the mean and
$\CVaR_{1}[Q] = \| Q \|_{L^{\infty}(\nu)}$ is the worst-case value,
while $\alpha \in (0,1)$ interpolates between the two extremes.

\subsection{Taylor approximations}
Given that the parameter-to-QoI map is sufficiently smooth, one can use Taylor approximations to compute moments of $Q$. 
In this section, we follow the presentation in \cite{ChenVillaGhattas19}. We adopt a Taylor approximation about the mean $\bar{m}$, which yields
\begin{equation}
	Q_{\lin}(m) = Q(\bar{m}) + \linner D Q(\bar{m}), m - \bar{m} \rinner_{\cM}
\end{equation}
in the linear case, and 
\begin{equation}
	Q_{\qua}(m) = Q(\bar{m}) + \linner D Q(\bar{m}), m - \bar{m} \rinner_{\cM} 
	+ \frac{1}{2} \linner D^2 Q(\bar{m}) (m - \bar{m}), m - \bar{m} \rinner_{\cM}
\end{equation}
in the quadratic case, where $D Q(\bar{m}) \in \cM$ and $D^2 Q(\bar{m}) \in \cL(\cM, \cM)$ are the gradient and Hessian operator of the parameter-to-QoI map evaluated at the mean $\bar{m} \in \cM$. 
When $m \sim \cN(\bar{m}, \cC)$ is Gaussian, we have analytic expressions for the means
\begin{align}
	\bE_{\nu}[Q_{\lin}] &= Q(\bar{m}), \label{eq:lin_exp}\\
	\bE_{\nu}[Q_{\qua}] &= Q(\bar{m})  + \frac{1}{2} \tr(\cH), \label{eq:quad_exp}
\end{align}
and variances 
\begin{align}
	\var_{\nu}[Q_{\lin}] &= \linner g, \cC g \rinner, \label{eq:lin_var}\\
	\var_{\nu}[Q_{\qua}] &= \linner g, \cC g \rinner + \frac{1}{2} \tr(\cH^2), \label{eq:_quad_var}
\end{align}
of the linear and quadratic approximations. Here, $g := DQ(\bar{m})$ and $\cH := \cC^{1/2} D^2 Q(\bar{m}) \cC^{1/2}$ denote the gradient and covariance-preconditioned Hessian at the mean $\bar{m}$. 
As discussed in \cite{ChenVillaGhattas19}, the trace can be approximated by first solving 
\begin{equation} \label{eq:hessian_ghep}
	D^2 Q(\bar{m}) \phi_j = \lambda_j \cC^{-1} \phi_j, \qquad j = 1, \dots, r_{\cH}
\end{equation}
for the eigenvalues $\lambda_j \in \bR$ (given in descending order, i.e. $\lambda_1 \geq \lambda_2 \geq \dots \geq \lambda_{r_{\cH}} \geq 0$)
and eigenvectors $\phi_j \in \cM$, which we note are orthonormal under the $\cC^{-1}$-inner product, i.e.\,$\langle \phi_i, \cC^{-1} \phi_j \rangle_{\cM} = \delta_{ij}$.
The traces are then estimated by a truncated sum over the first $r_{\cH}$ eigenvalues,
\[ \tr(\cH) \approx \sum_{j=1}^{r_{\cH}} \lambda_j, \quad \tr(\cH^2) \approx \sum_{j=1}^{r_{\cH}} \lambda_j^2. \]
Thus, the mean and variance of $Q$ can be estimated by linear and quadratic Taylor approximations without sampling $m \sim \nu$. 

When the covariance-preconditioned Hessian has a fast-decaying spectrum, 
the truncated sum is accurate even when using a low rank $r_{\cH}$, and can be efficiently computed using randomized methods with $\cO(r_{\cH})$ Hessian actions.
Specifically, we consider the randomized algorithm of \cite{SaibabaLeeKitanidis15} (Algorithm 6). This is a matrix-free approach, requiring only the action of the Hessian on $2(r_{\cH} + n_{\mathrm{OS}})$ randomly drawn vectors, where $n_{\mathrm{OS}}$ is an oversampling factor typically taken to be $\approx 20$.
The fast decay of eigenvalues have been numerically observed in Hessians of parameter-to-QoI maps for many problems of interest, and have even been mathematically proven in some. Examples include problems that arise in ice sheet flows \cite{IsaacPetraStadlerEtAl15a},
ocean dynamics \cite{KalmikovHeimbach14},
viscous and turbulent flows \cite{YangStadlerMoserEtAl11,ChenVillaGhattas19}, 
porous flows and poroelasticity \cite{AlexanderianPetraStadlerEtAl17a,ChenGhattas21,AlghamdiHesseChenEtAl21},
acoustic and electromagnetic scattering \cite{Bui-ThanhGhattas12a, Bui-ThanhGhattas12, Bui-ThanhGhattas13,ChenHabermanGhattas21}, 
and seismic wave propagation \cite{Bui-ThanhBursteddeGhattasEtAl12,MartinWilcoxBursteddeEtAl12,Bui-ThanhGhattasMartinEtAl13}.

The accuracy of the Taylor approximation for computing expectations is analyzed in \cite{AlexanderianPetraStadlerEtAl17}. When the parameter-to-QoI map has a globally bounded third-order derivative, one has 
\[ \bE_{\nu}[|Q - Q_{\qua}] \leq \sqrt{3} K \tr(\cC)^{3/2}, \]
where $K \geq 0$ is the bound on the third-order derivative such that 
\[
|D^3 Q(m) (\widetilde{m}_1, \widetilde{m}_2, \widetilde{m}_3)| \leq K \|\widetilde{m}_1\|_{\cM} \|\widetilde{m}_2\|_{\cM} \|\widetilde{m}_3\|_{\cM} 
\quad \forall m, \widetilde{m}_1, \widetilde{m}_2, \widetilde{m}_3 \in \cM,
\]
in which we are viewing the third-order derivative as a mapping from $m \in \cM$ to a bounded trilinear functional $D^3 Q(m)(\cdot, \cdot, \cdot)$.
This suggests that the quadratic approximation is accurate when $K$ is small (i.e.\ truncation error is small) and when $\tr(\cC)$ is small (i.e.\ variances are small). Conversely, one anticipates larger approximation errors of the expectation for highly nonlinear functions and distributions with large variances. This agrees with the well-known fact that Taylor approximations posses only local accuracy guarantees. The accuracy of the expectation therefore depends on the amount of probability mass in regions far from the expansion point where Taylor truncation errors are large.

\subsection{Derivative computation using an adjoint method}
The gradient $D Q(\bar{m})$ and Hessian action $D^2 Q(\bar{m}) \hat{m}$ in a direction $\hat{m} \in \cM$ can be efficiently computed via an adjoint method when $Q(m) = q(u(m),m)$ depends on the PDE $\cR(u,m) = 0$. Expressions for the derivatives can be derived using the Lagrangian formalism (see, for example, \cite{GhattasWillcox21}). We begin by defining the Lagrangian
\[ \cL(u,m,v) := q(u,m) + \linner \cR(u,m), v \rinner, \]
where $v \in \cV$ is the adjoint variable. For notational compactness, $\linner \cdot, \cdot \rinner$ (without the subscript) will be used to denote the inner product on the Hilbert space ($\cV$ or $\cM$) implied by the arguments. The gradient is then derived using partial derivatives of the Lagrangian to obtain the system $\partial_v \cL = 0$, $\partial_u \cL = 0$ and $DQ = \partial_m \cL$. 
This can be written out in weak form as
\begin{align}
	\label{eq:pde_state_eq}
	\linner \cR(u,\bar{m}), \tilde{v} \rinner &= 0 
	\qquad &\forall \tilde{v} \in \cV, \\
	\label{eq:pde_adjoint_eq}
	\linner \partial_u \cR(u,\bar{m})^* v, \tilde{u} \rinner &= - \linner \partial_u q(u,\bar{m}), \tilde{u} \rinner
	\qquad &\forall \tilde{u} \in \cV, \\
	\label{eq:pde_gradient_eq}
	\linner DQ(\bar{m}), \tilde{m} \rinner &= \linner \partial_{m} q(u,\bar{m}), \tilde{m} \rinner + \linner \partial_m \cR(u, \bar{m}) \tilde{m}, v \rinner 
	\qquad &\forall \tilde{m} \in \cM,
\end{align}	
where $(\cdot)^*$ denotes the adjoint of an operator.
Thus, in addition to the solving state PDE \eqref{eq:pde_state_eq}, the gradient $g$ can be obtained by solving the linear adjoint PDE \eqref{eq:pde_adjoint_eq} with operator $\partial_u \cR^* : \cV \rightarrow \cV$ for the adjoint variable $v$, and then evaluating the gradient form \eqref{eq:pde_gradient_eq}. Additionally, the Hessian action in the direction $\hat{m}$ can be derived by defining a Lagrangian,
\[ \cL_{H}(u,m,v,\hat{u},\hat{m},\hat{v}):= 
	\linner \partial_{u}\cL(u, m, v), \hat{u} \rinner
	+ \linner \partial_{v}\cL(u, m, v), \hat{v} \rinner
	+ \linner \partial_{m}\cL(u, m, v), \hat{m} \rinner,
\]
where $\hat{u}, \hat{v} \in \cV$ are the incremental state and adjoint variables, respectively. Given the solutions to the state and adjoint PDEs, $u$ and $v$, the Hessian action is then given by the system $\partial_{v} \cL_{H} = 0$, $\partial_{u} \cL_{H} = 0$, and $\partial_{mm}^{2} q \hat{m} = \partial_{m} \cL_{H}$ taken at $m = \bar{m}$. This has the following form,
\begin{align}
	\label{eq:pde_incr_state_eq}
	\linner \partial_u \cR \hat{u}, \tilde{v} \rinner &= - \linner \partial_m {\cR} \hat{m}, \tilde{v} \rinner, \\
	\label{eq:pde_incr_adjoint_eq}
	\linner \partial_u \cR^* v, \tilde{u} \rinner 
		&= - \linner \linner \partial^2_{uu} \cR \hat{u} + \partial^2_{mu} \cR \hat{m}, \tilde{u}  \rinner, v \rinner 
		- \linner \partial^2_{uu} q \hat{u} + \partial^2_{mu} q \hat{m}, \tilde{u} \rinner , \\
	\label{eq:pde_hessian_eq}
	\linner D^2{Q} \hat{m}, \tilde{m} \rinner 
		&= 
		\linner \partial_{m}\cR \tilde{m}, \hat{v} \rinner
		+ \linner \linner \partial^2_{um} \cR \hat{m} + \partial^2_{mm} \cR \hat{m}, \tilde{m}  \rinner, v \rinner 
		+ \linner \partial^2_{um} Q \hat{m} + \partial^2_{mm} Q \hat{m}, \tilde{m} \rinner ,
\end{align}	
for all $\tilde{u}, \tilde{v} \in \cV$ and $\tilde{m} \in \cM$, where we have implicitly assumed the forms are evaluated at $(u(\bar{m}),\bar{m})$ to further simplify the notation. 
Therefore, applying Hessian action requires the solution of two additional linear PDEs, namely the incremental state \eqref{eq:pde_incr_state_eq} and incremental adjoint \eqref{eq:pde_incr_adjoint_eq} equations for $\hat{u}$ and $\hat{v}$, respectively. These involve the linearized state operator $\partial_u \cR: \cV \rightarrow \cV$ and its adjoint $\partial_u \cR^*$. Moreover, the linear operators $\partial_u \cR$ and $\partial_u \cR^*$ remain unchanged when applying the Hessian to different directions, say, $\hat{m}_1$ and $\hat{m}_2$.
This allows one to amortize the costs of applying the Hessian by reusing factorizations or preconditioners constructed for the $\partial_u \cR$ and $\partial_u \cR^*$ operators across successive Hessian actions.

In Table~\ref{tab:pde_solves}, we summarize the number linear PDE solves required to (i) evaluate the QoI, (ii) construct a linear Taylor approximation, and (iii) construct a quadratic Taylor approximation using the randomized generalized eigenvalue solver of \cite{SaibabaLeeKitanidis15}, broken down into those involved in the state, adjoint, incremental state and incremental adjoint PDEs.
Moreover, we identify the total number of unique linear systems solved. 
When using a direct solver for these linear systems, the matrix factors for $\partial_u \cR$ or $\partial_u \cR^*$ can be reused for all of the adjoint and incremental PDEs, 
where asymptotically ($n \gg r_{\cH}$), the back-substitution step has negligible cost compared to the matrix factorization step.
In this case, constructing the quadratic Taylor approximation costs only one additional linear PDE solve compared to the linear Taylor approximation, which itself costs only one additional linear PDE solve compared to evaluating the QoI. 
\begin{table}[tbhp]
	\centering
	\small
	\caption{Linear PDE solves required to (i) evaluate the QoI, (ii) construct a linear Taylor approximation, 
	and (iii) construct a (low-rank) quadratic Taylor approximation via the randomized method (Algorithm 6 of \cite{SaibabaLeeKitanidis15}), assuming that the state PDE requires solving $n_L$ linear PDEs (e.g. Newton or Picard iterations). Here, $r_{\cH}$ denotes the rank used for the Hessian eigenvalue problem 
	and $n_{\mathrm{OS}}$ is the oversampling factor for the randomized eigenvalue solver (typically $n_{\mathrm{OS}} = \cO(10)$).}
	\label{tab:pde_solves}
	\begin{tabular}{| l | c | c | c | c | c | c | }
		\hline
		& State & Adj. & Incr. state. & Incr. adj. & Total & Total unique \\
		\hline
	Evaluate QoI  & $n_L$ & - & - & - & $n_L$ & $n_L$ \\ 
		\hline
	Lin. Taylor & $n_L$ & $1$ & - & - & $n_L + 1$ & $n_L + 1$ \\ 
		\hline
	Quad. Taylor & $n_L$ & $1$ & $2(r_{\cH} + n_{\mathrm{OS}})$ & $2(r_{\cH} + n_{\mathrm{OS}})$ & $n_L + 1 + 4(r_{\cH} + n_{\mathrm{OS}})$ & $n_L + 2$\\
		\hline
	\end{tabular}
\end{table}

\section{Gaussian mixture Taylor approximations}\label{sec:taylorgm}
We next propose an approach to improve the approximation accuracy of Taylor approximations for Gaussian distributions with larger variance. We first approximate the underlying Gaussian distribution $\nu$ by a Gaussian mixture $\nu_{\mix} = \sum_{i=1}^{N_{\mix}} w_i \nu_i$, where the components $\nu_i = \cN(\bar{m}_i, \cC_{i})$ are Gaussians with smaller variance. Under the Gaussian mixture approximation, we have
\[
\bE_{\nu}[Q]  \approx \bE_{\nu_{\mix}}[Q] = \sum_{i=1}^{N_{\mix}} w_i \bE_{\nu_i}[Q].
\]
We can construct individual Taylor approximations about the mixture component means, $\bar{m}_i$, 
\begin{align}
	Q_{\lin, i}(m) &= Q(\bar{m}_i) + \linner D Q(\bar{m}_i), m - \bar{m}_i \rinner_{\cM}, \\
	Q_{\qua, i}(m) &= Q(\bar{m}_i) + \linner D Q(\bar{m}_i), m - \bar{m}_i \rinner_{\cM} 
		+ \frac{1}{2} \linner D^2 Q(\bar{m}_i) (m - \bar{m}_i), m - \bar{m}_i \rinner_{\cM}.
\end{align}
We then use the Taylor approximations to compute the component expectations $\bE_{\nu_i}[Q]$. This yields the linear and quadratic approximations of the expectation
\begin{align}
	\widehat{Q}_{\lin, \gm} &:= \sum_{i=1}^{N_{\mix}} w_i \bE_{\nu_i}[Q_{\lin,i}] 
		= \sum_{i=1}^{N_{\mix}} w_i Q(\bar{m}_i), \\
	\widehat{Q}_{\qua, \gm} &:= \sum_{i=1}^{N_{\mix}} w_i \bE_{\nu_i}[Q_{\qua,i}] =
		\sum_{i=1}^{N_{\mix}} w_i ( Q(\bar{m}_i) + \frac{1}{2} \tr(\cH_i) ),
\end{align} 
where $\cH_i = \cC_i^{1/2} D^2 Q(\bar{m}_i) \cC_i^{1/2}$. Taylor expansions about the mixture means $\bar{m}_i$ are expected to be more accurate since $\nu_i$ have smaller variances. We will refer to this approach as the \textit{Gaussian mixture Taylor approximation}, or more simply, the \textit{mixture Taylor approximation}.

We can also derive a similar approximation for the variance, and by extension, the standard deviations. We approximate the expectation terms in the variance expression by
\[
\var_{\nu}[Q] = \bE_{\nu}[Q^2] - \bE_{\nu}[Q]^2 
\approx \bE_{\nu_{\mix}}[Q^2] - \bE_{\nu_{\mix}}[Q]^2 
= \sum_{i=1}^{N_{\mix}} w_i \bE_{\nu_i}[Q^2] - \left(\sum_{i=1}^{N_{\mix}} w_i \bE_{\nu_i}[Q] \right)^{\!\!\!2}.
\]
Subsequently applying Taylor approximations to compute $\bE_{\nu_i}[Q^2]$ and $\bE_{\nu_i}[Q]$ yields the variance approximations 
\begin{align}
\widehat{V}_{\lin,\gm} &= \sum_{i=1}^{N_{\mix}} w_i \bE_{\nu_i}[Q_{\lin,i}^2] - \widehat{Q}_{\lin, \gm}^2, \\
\widehat{V}_{\qua,\gm} &= \sum_{i=1}^{N_{\mix}} w_i \bE_{\nu_i}[Q_{\qua,i}^2] - \widehat{Q}_{\qua, \gm}^2,
\end{align}
for the linear and quadratic approximations, respectively.
Here, the second moment terms are given by 
\begin{align}
\bE_{\nu_i}[Q_{\lin,i}^2] &= Q(\bar{m}_i)^2 + \linner g_i, \cC_i g_i\rinner_{\cM}, \\
\bE_{\nu_i}[Q_{\qua,i}^2] &= (Q(\bar{m}_i) + \frac{1}{2}\tr(\cH_i))^2 
+ \linner g_i, \cC_i g_i \rinner_{\cM} + \frac{1}{2} \tr(\cH_i^2),
\end{align}
where we have used the notation $g_i := D Q(\bar{m}_i)$. Approximations for the standard deviations can then be obtained by taking the square root of the variance.

\subsection{Gaussian mixture Taylor approximations for CVaR}
\subsubsection{CVaR of Taylor approximations}
We also derive approximations for computing CVaR using Taylor approximations and Gaussian mixtures. We begin with the linear case, for which there is a closed-form expression of the CVaR when $\nu$ is Gaussian. Due to the Gaussianity of $m$, the linear Taylor approximation $Q_{\lin}(m)$ is also Gaussian with mean 
$\mu_{Q_{\lin}} = \bE_{\nu}[Q_{\lin}]$ 
and standard deviation 
$\sigma_{\lin} = \var_{\nu}[Q_{\lin}]^{1/2}$ 
given by \eqref{eq:lin_exp} and \eqref{eq:lin_var}. 
Using the CVaR for a Gaussian random variable (e.g. \cite{RockafellarUryasev00}), we have the CVaR for the linear approximation,
\begin{equation}
(\widehat{C}_{\alpha})_{\lin} := \mu_{Q_{\lin}} + \frac{1}{1-\alpha} \frac{\sigma_{Q_{\lin}}}{\sqrt{2\pi}}
	\exp \left( - \frac{(\VaR_{\alpha} - \mu_{Q_{\lin}})^2}{2 \sigma_{Q_{\lin}}^2} \right),
\end{equation}
where the $\alpha$-quantile $\VaR_{\alpha}[Q_{\lin}]$ can be found by the inverse CDF of the normally distributed random variable $\cN(\mu_{Q_{\lin}}, \sigma_{Q_{\lin}}^2)$.

Unlike the linear Taylor approximations, we do not have an analytic representation for 
the quadratic case. Instead, we can evaluate the CVaR using the quadratic approximation by a sample average approximation. 
This is the approach taken in \cite{ChenGhattas21}, where the authors use a low-rank approximation of the Hessian
to rapidly sample the quadratic approximation. We write the low-rank quadratic approximation as
\begin{equation}
	Q_{\qua, \lr}(m) = Q(\bar{m}) + \linner g, m - \bar{m} \rinner_{\cM}
		+ \frac{1}{2} \sum_{j=1}^{r_{\cH}} \lambda_j \linner \cC^{-1} \phi_j, m - \bar{m} \rinner_{\cM}^2,
\end{equation}
where $\lambda_j, \phi_j$ are the generalized eigenpairs from \eqref{eq:hessian_ghep}. The expectation and variance of this approximation match those computed using the analytic expressions \eqref{eq:quad_exp} and \eqref{eq:_quad_var} for the quadratic Taylor approximation when the trace is computed by the truncated sum to rank $r_{\cH}$.

For the Whittle--Mat\'ern Gaussian random fields under consideration (with $\beta = 2)$, samples for $m$ are drawn by solving the stochastic PDE associated with the elliptic operator, $\cA(m - \bar{m}) = \cW$, where $\cW$ is Gaussian white noise (see Definition 6, \cite{LindgrenRueLindstroem11}). 
Although the PDE involving $\cA$ can be efficiently solved using fast elliptic solvers, the operations still scale with the discretization dimension of $m$. Here, we present an new sampling scheme for the low-rank quadratic approximation 
that completely avoids the PDE solve. 
Specifically, $Q_{\qua, \lr}$ is distributionally equivalent to
\begin{equation}\label{eq:quadratic_sample}
Q_{\qua, \lr} \equaldist Q(\bar{m}) 
+ \left(\linner g, \cC g \rinner_{\cM} - \sum_{j=1}^{r_{\cH}} \linner g, \phi_j \rinner_{\cM} \right)^{\!\!1/2} \!\!\! y_0
+ \sum_{j=1}^{r_{\cH}} 
	\left(\linner g , \phi_j \rinner_{\cM} y_j + \lambda_j y_j^2 \right), 
\end{equation}
where $y_0, \dots, y_r \sim \cN(0,1)$ are i.i.d. standard Gaussian random variables (see Appendix \ref{sec:appendix_sampling}). Here, we only need to draw samples 
of $y = [y_0, y_1, \dots, y_{r_{\cH}}] \in \bR^{r_{\cH}}$,
with the inner products computed a single time after solving the generalized eigenvalue problems. The cost of sampling the quadratic approximation by \eqref{eq:quadratic_sample} is negligible compared to the cost of solving the state PDE.

We can then use a standard sampling-based estimator to compute the CVaR by the quadratic approximation. For example, we can use the estimator from \cite{HongHuLiu14},
\begin{equation}
	(\widehat{C}_{\alpha})_{\qua} := \min_{t \in \bR} t + \frac{1}{M} \sum_{j=1}^{M} (Q_{\qua}^{(j)} - t)^+, 
\end{equation}
where $Q_{\qua}^{(j)}$, $j = 1,\dots, M$, are i.i.d. samples drawn from the low-rank quadratic approximation. The minimization over $t$ is one-dimensional and can be performed using a standard search algorithm without additional PDE solves. As sampling is inexpensive, we can use a large number of samples to compute a quadratic CVaR estimate to make the sampling error negligible.

\subsubsection{With Gaussian mixtures}
We can also obtain CVaR estimators using the Gaussian mixture approximation $\nu_{\mix} = \sum w_i \nu_i$. That is, we replace the expectation in \eqref{eq:cvar_min} by the expectation with respect to the Gaussian mixture, 
\[ 
\CVaR_{\alpha}[Q] \approx
\min_{t \in \bR} t + \frac{1}{1-\alpha}\sum_{i=1}^{N_{\mix}} w_i  \bE_{\nu_i}[(Q - t)^{+}].
\]
Taking Taylor approximations at each component mean then yields the Gaussian mixture Taylor CVaR estimators. In the linear case, this becomes
\begin{equation}\label{eq:cvar_lin_mix_min}
 (\widehat{C}_{\alpha})_{\lin, \gm} := \min_{t \in \bR} t 
 	+ \frac{1}{1 - \alpha} \sum_{i=1}^{N_{\mix}} w_i \bE_{\nu_i}[(Q_{\lin,i} - t)^+].
\end{equation}
For each mixture component, the corresponding linear approximation $Q_{\lin,i}$ is normally distributed with mean 
$\mu_{Q_{\lin,i}} = Q(\bar{m}_i)$ and variance $\sigma_{Q_{\lin,i}}^2 = \linner g_i, \cC_i g_i \rinner$. 
Therefore, our combined mixture Taylor approximation is equivalent to approximating the distribution of $Q$ by the Gaussian mixture 
\[
Q_{\mix,\lin} \sim \sum_{i=1}^{N_{\mix}} w_i \cN(\mu_{Q_{\lin,i}}, \sigma_{Q_{\lin,i}}^2).
\]
The minimizer $t^*$ of \eqref{eq:cvar_lin_mix_min}, which is the VaR of $ Q_{\mix,\lin}$, 
satisfies 
\[
\sum_{i=1}^{N_{\mix}} w_i \bP[Q_{\lin,i} > t^*] = 1 - \alpha, 
\]
where $\bP[Q_{\lin,i} > t^*]$ can be expressed using the CDF of $\cN(\mu_{Q_{\lin,i}}, \sigma_{Q_{\lin,i}}^2)$. Thus, $t^*$ can be obtained from a simple one-dimensional search. The overall CVaR estimate \eqref{eq:cvar_lin_mix_min} is then given by  
\begin{equation}\label{eq:cvar_lin_mix}
(\widehat{C}_{\alpha})_{\lin, \gm} = t^* 
+ \frac{1}{1-\alpha}\sum_{i=1}^{N_\mix}w_i \left( 
\frac{\sigma_{Q_{\lin,i}}}{\sqrt{2\pi}} 
	\exp\left(-\frac{(t^* - \mu_{Q_{\lin,i}})^2}{2\sigma_{Q_{\lin,i}}^2}\right)
		+ (\mu_{Q_{\lin,i}} - t^*) \bP[Q_{\lin,i} > t^*]
\right),
\end{equation}
where we have utilized the fact that for a normally distributed random variable $X \sim \cN(\mu_X, \sigma_X^2)$, 
\[
	\bE[(X - \gamma)^+] = \frac{\sigma_X}{\sqrt{2\pi}} 
		\exp\left(-\frac{(\gamma - \mu_X)^2}{2\sigma_X^2}\right)
		+ (\mu_X - \gamma) \bP[X > \gamma].
\]
This latter expression can be obtained by a direct evaluation of the integral.

Similarly, we can use quadratic approximations for the mixture components,
\[
\CVaR_{\alpha}[Q] \approx \min_{t \in \bR} t 
	+ \frac{1}{1-\alpha} \sum_{i=1}^{N_{\mix}} w_i \bE_{\nu_i}[(Q_{\qua,i} - t)^+].
\]
Since the quadratic approximation does not admit elegant, closed-form expressions, we can instead use the sampling based estimate
\begin{equation}\label{eq:cvar_quaDix}
(\widehat{C}_{\alpha})_{\qua,\mathrm{GM}}
= \min_{t \in \bR} t 
	+ \frac{1}{1-\alpha} \sum_{i=1}^{N_{\mix}} w_i \; \frac{1}{M_i} \sum_{j=1}^{M_i} (Q_{\qua,i}^{(j)} - t)^+,
\end{equation}
where $Q_{\qua,i}^{(j)}$, $j=1,\dots,M_i$, are i.i.d. samples drawn from the low-rank quadratic approximation for component $\nu_i$. As previously noted, sampling of the low-rank quadratic approximation has negligible computational cost, so we can afford to use large sample sizes to make the sampling error negligible.

\section{Constructing the Gaussian mixture approximation}\label{sec:gm}
\subsection{Approximation of a one-dimensional Gaussian}
Following \cite{VittaldevRussell16}, we now describe a strategy for constructing the mixture approximation 
$\nu_{\mix} = \sum_{i=1}^{N_{\mix}} w_i \cN(\bar{m}_i, \cC_i)$ 
based on a one-dimensional decomposition along a dominant direction. 
This approach is originally presented for finite dimensional Gaussians in $\bR^n$. 
In this section, we present it for generic Gaussian measures to include the case of Gaussian random fields. 
In particular, we derive the expressions for the component means $\bar{m}_i$, covariances $\cC_i$, and its inverse $\cC_i^{-1}$, 
which are used in the Taylor approximation-based risk measure estimates.

In \cite{VittaldevRussell16}, the authors approximate standard normal distribution $\nu_0 = \cN(0, 1)$ with density $\pi_0$, 
by Gaussian mixture $\nu_{0, \mix} = \sum_{i=1}^{N_{\mix}} w_i \cN(\mu_i, \sigma_i^2)$ with density $\pi_{0,\mix}$ by minimizing the $L^q(\bR)$ misfit of the densities for some $q \geq 1$. 
To simplify the minimization problem, authors define a fixed standard deviation by the rule $\sigma_i = N_{\mix}^{-p}$ with predefined $p \in (0,1)$. Moreover, the mixture is assumed to be symmetric about $x=0$. Mixture approximations are then obtained by solving 
\begin{align}
	\min_{w_i, \mu_i} \int_{\bR} | \pi_0(x) - \pi_{0, \mix}(x) |^q dx, 
	\quad \text{s.t. } \sum_{i=1}^{N_{\mix}} w_i = 1
\end{align}
for different mixture sizes $N_{\mix}$. 
In practice $q=2$ is typically used to simplify the optimization problem.
In our work, we directly make use of the results for Gaussian mixtures up to 39 components computed in the univariate splitting library of \cite{VittaldevRussell16}. 
Figure~\ref{fig:univariate_splitting} shows some examples of the mixture approximations 
of \cite{VittaldevRussell16} for the standard normal distribution in 1D.

\begin{figure}[htpb!]
	\centering
	\includegraphics[width=0.3\textwidth]{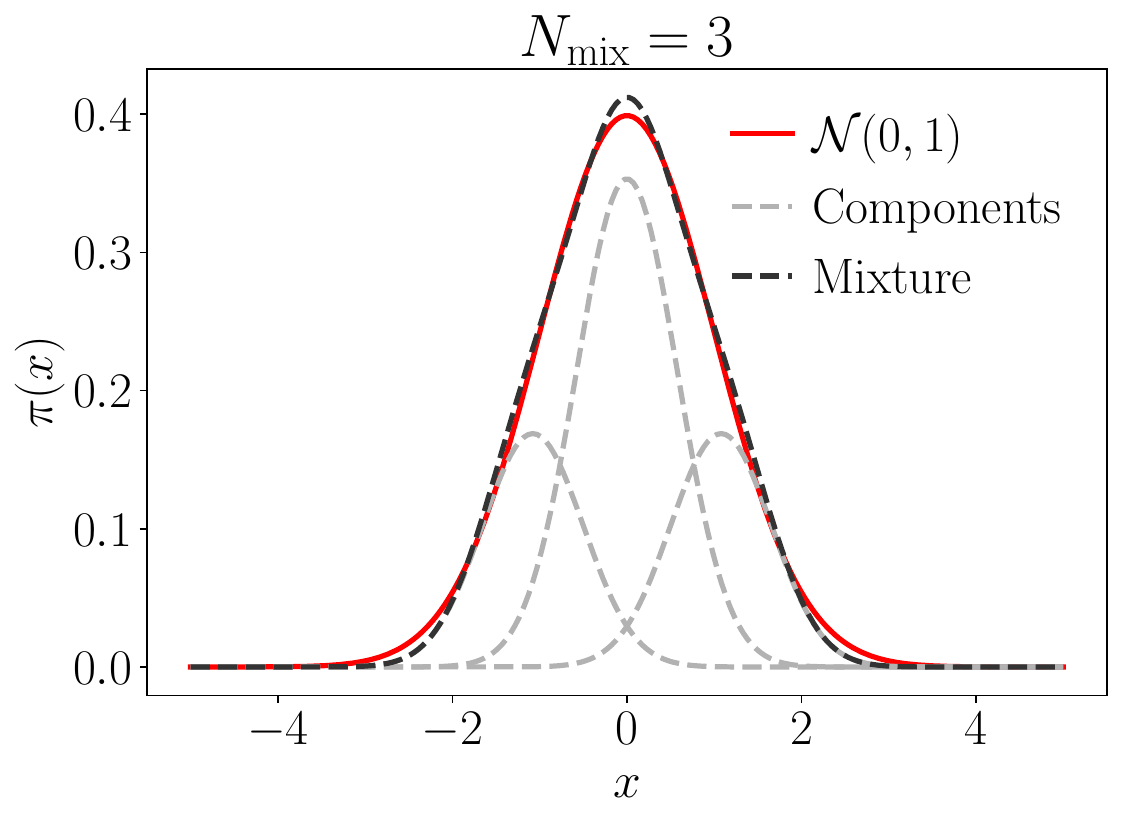}
	\includegraphics[width=0.3\textwidth]{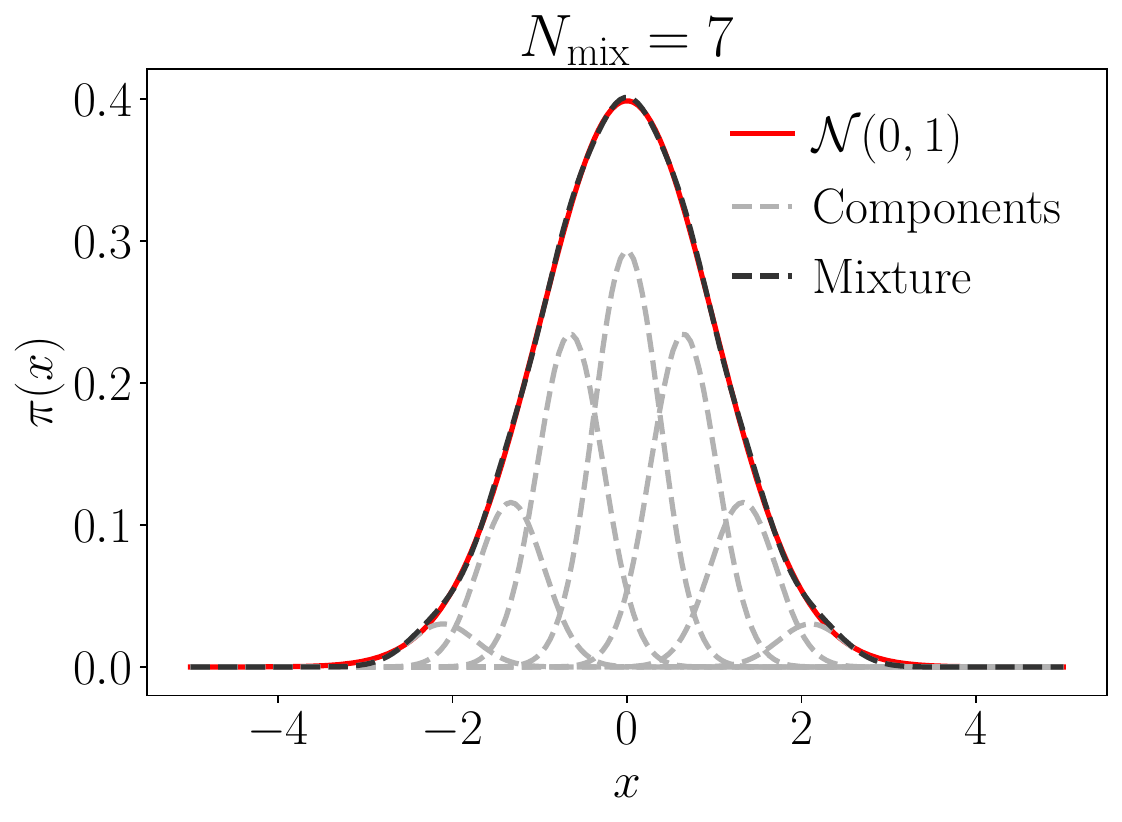}
	\includegraphics[width=0.3\textwidth]{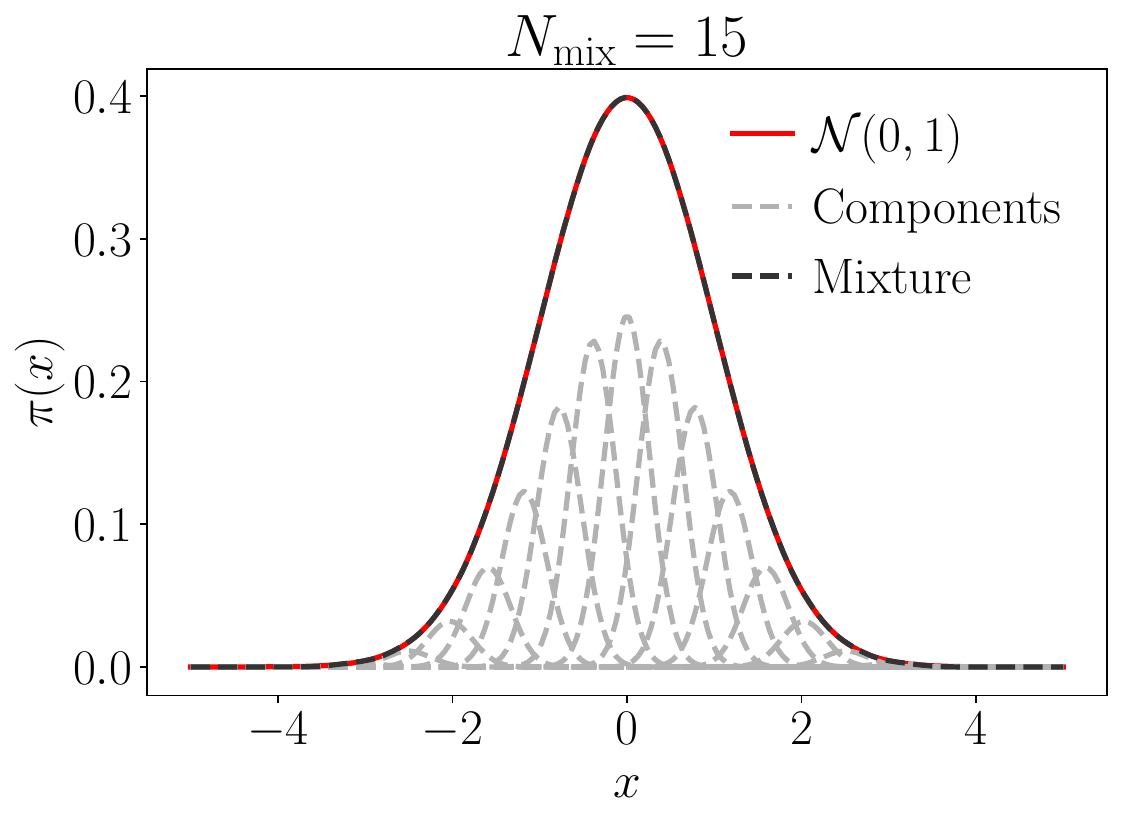}
	\caption{Probability density functions of Gaussian mixture approximations of the standard normal distribution using 
	$N_{\mix} = 3, 5, 7$ mixture components obtained from the univariate splitting library of \cite{VittaldevRussell16}. The mixture components have standard deviations $\sigma_i = N_{\mix}^{-p}$ with $p = 1/2$.}
	\label{fig:univariate_splitting}
\end{figure}

\subsection{Decomposition along a Karhunen--Lo\`eve direction}
As discussed in \cite{VittaldevRussell16}, the 1D mixture approximation 
$\nu_{0,\mix} = \sum_{i=1}^{N_{\mix}} w_i \cN(\mu_i, \sigma_i^2)$ of the standard Gaussian $\nu_{0} = \cN(0, 1)$ 
can be used to construct a Gaussian mixture for general Gaussian measures $\nu = \cN(\bar{m}, \cC)$. To this end, we consider the Karhunen--Lo\`eve (KL) expansion of $m \sim \nu$,
\[
m(\omega) = \bar{m} 
+ \sum_{j=1}^{\infty} \xi_j(\omega) \sqrt{\lambda_j} \varphi_j
\]
where $(\lambda_j, \varphi_j)_{j=1}^{\infty}$ are eigenvalues and eigenvectors of the covariance operator with $\lambda_1 \geq \lambda_2 \geq \dots$, and $\xi_j \sim \cN(0,1)$ are i.i.d. To construct the mixture approximation, we select a particular KLE mode $\varphi_k$, $k \in \bN$. We approximate its corresponding stochastic component, $\xi_k \sim \cN(0,1)$, by the 1D mixture approximation $\sum_{i=1}^{N_{\mix}} w_i \cN(\mu_i, \sigma_i^2)$. This leads to mixture components
\[
m_i(\omega) = \bar{m} + (\mu_i + \xi_k^i(\omega) \sigma_i) \sqrt{\lambda_k} \varphi_k
+ \sum_{j \neq k}^{\infty} \xi_j(\omega) \sqrt{\lambda_j} \varphi_j,
\]
where $\xi_k^i \sim \cN(0,1)$ independent to $\xi_j$, $j \neq k$. We observe that each component corresponds to the KLE of a Gaussian $\cN(\bar{m}_i, \cC_i)$ with mean
and covariance 
\begin{align}
	\label{eq:kl_mix_mean}
	\bar{m}_i &= \bar{m} + \mu_i \sqrt{\lambda_k} \varphi_k,\\
	\label{eq:kl_mix_cov}
	\cC_i &= \cC(\cI_{\cM} - \varphi_k \otimes \varphi_k) + \sigma_i^2 \lambda_k \varphi_k \otimes \varphi_k
	= \cC + (\sigma_i^2 - 1)\lambda_k \varphi_k \otimes \varphi_k,
\end{align}
where $\cI_{\cM}$ is the identity operator on $\cM$ and $\mu_i$ and $\sigma_i$ come from the 1D mixture approximation.
Thus, the Gaussian mixture approximation generated by a 1D mixture approximation along a KLE mode $\varphi_k$ is given by $\nu_{\mix} = \sum_{i=1}^{N_{\mix}} w_i \cN(\bar{m}_i, \cC_i)$, 
where the mean and covariance are given by \eqref{eq:kl_mix_mean} and \eqref{eq:kl_mix_cov}, and $w_i$ comes from the 1D approximation. Moreover, we can formally write the inverse of the component covariance operators as 
\begin{equation}
	\cC_i^{-1} 
	= \cC^{-1}(\cI_{\cM} - \varphi_k \otimes \varphi_k) + \frac{1}{\sigma_i^2 \lambda_k} \varphi_k \otimes \varphi_k
	= \cC^{-1} + \left(\frac{1}{\sigma_i^2} - 1 \right) \frac{1}{\lambda_k} \varphi_k \otimes \varphi_k.
\end{equation}

Such mixture components are spaced in alignment with the chosen direction $\varphi_k$. 
Moreover, the decomposition produces Gaussian mixture components that have their variance along $\varphi_k$ reduced by a factor of $\sigma_i^2$ compared to the original Gaussian, while the variances along all remaining KLE directions remain unchanged. Therefore, it makes sense to decompose along the dominant variance direction of $\cC$, i.e. direction with largest $\lambda_k$. This is very effective if the covariance exhibits rapid eigenvalue decay such that it has a notable high-variance direction. This is the case for Gaussian random fields with large correlation lengths.

\subsection{Decomposition along arbitrary directions}
The 1D decomposition can also be made along arbitrary directions in $\range(\cC)$. 
In \cite{VittaldevRussell16}, this is done by first performing the mixture decomposition in the white noise distribution, and then transforming the mixture components back to the original distribution
using the Cholesky or spectral decomposition of the covariance matrix.
Here, we will follow the same approach, but will avoid explicitly computing with $\cC^{1/2}$ 
via a decomposition due to the high-dimensionality of $\cM$. 
To this end, we start by recalling that  
$ m - \bar{m} = \cC^{1/2} \cW$,
where $\cW$ is again Gaussian white noise.
Formally, we can consider $\cW \sim \cN(0, \cI_{\cM})$ to be a centered Gaussian with the identity operator as the covariance. 
For a direction $\varphi \in \cM$ with $\|\varphi\|_{\cM} = 1$, we can formally perform the KLE-based decomposition for the distribution of $\cW$ along $\varphi$, which is a KLE mode of $\cI_{\cM}$ with unit variance. Based on \eqref{eq:kl_mix_mean} and \eqref{eq:kl_mix_cov}, we can write the mixture components of white noise as 
\[ 
\cW_i \sim \cN(\mu_i \varphi, \cI_{\cM} + (\sigma_i^2 - 1) \varphi \otimes \varphi).
\]
This leads to corresponding mixture components,
$m_i = \bar{m} + \cC^{1/2} \cW_i$, 
suggesting that the mixture components have distribution $m_i \sim \cN(\bar{m}_i, \cC_{i})$, with component mean and covariance
\begin{align}\label{eq:arb_mix_mean_phi}
	\bar{m}_i &= \bar{m} + \mu_i \cC^{1/2} \varphi, \\
\label{eq:arb_mix_cov_phi}
	\cC_i &= \cC^{1/2}(\cI_{\cM} + (\sigma_i^2 - 1)\varphi \otimes \varphi) \cC^{1/2}.
\end{align}

In this case, the mixture components are spaced along the direction $\cC^{1/2} \varphi$. 
If the desired alignment direction is specified as $\psi \in \range(\cC)$, then we can define
$ \varphi = {\cC^{-1/2} \psi}/{\|\cC^{-1/2} \psi\|_{\cM}}$
by a normalization.
Furthermore, we can introduce the variable
\begin{equation}\label{eq:pseudo_eigenvalue}
	\lambda_{\psi} = \|\cC^{-1/2} \psi\|_{\cM}^{-2}
	= \linner \psi, \cC^{-1} \psi \rinner_{\cM}^{-1},
\end{equation}
so that $\cC^{1/2} \varphi = \sqrt{\lambda_{\psi}} \psi$. 
The component mean and covariance can instead be expressed in terms of $\lambda_{\psi}$ and $\psi$ as
\begin{align}
	\label{eq:arb_mix_mean_psi}
	\bar{m}_i &= \bar{m} + \mu_i \sqrt{\lambda_{\psi}} \psi, \\
	\label{eq:arb_mix_cov_psi}
	\cC_i &= \cC + (\sigma_i^2 - 1) \lambda_{\psi} \psi \otimes \psi.
\end{align}
Moreover, the inverse of the component covariance is
\begin{equation}
	\cC_i^{-1} = \cC^{-1} 
	+ \left( \frac{1}{\sigma_i^2} - 1 \right) \lambda_{\psi} \cC^{-1} \psi \otimes \cC^{-1} \psi.
\end{equation}
The corresponding Gaussian mixture is then given by $\nu_{\mix} = \sum_{i=1}^{N_{\mix}} w_i \cN(\bar{m}_i, \cC_i)$.
Note that the final expressions for the component means and covariances do not involve $\cC^{1/2}$.

While the derivation for the mixture is largely formal in the infinite-dimensional setting, we can verify that the resulting mixture components $\nu_i = \cN(\bar{m}_i, \cC_i)$ are well-defined Gaussian measures. 
By inspection of \eqref{eq:arb_mix_cov_psi}, we see that $\cC_i$ is indeed a self-adjoint, positive operator, and its trace is 
\begin{equation} 
\tr(\cC_i) = \tr(\cC) + (\sigma_i^2 - 1) \frac{\linner \psi, \psi \rinner_{\cM}}{\linner \psi, \cC^{-1} \psi \rinner_{\cM}} \leq \tr(\cC).
\end{equation}
Thus, if $\cC$ is of trace-class, then so is $\cC_i$. Moreover, the reduction in variance is given by an inverse Rayleigh quotient of $\cC^{-1}$, 
where the variance reduction is maximized when $\psi$ is the dominant KLE direction $\varphi_1$. In fact, whenever $\psi = \varphi_k$ is a KLE direction, we have $\lambda_{\psi} = \lambda_k$, and the expressions coincide with the KLE case. 

Although the reduction in variance is not necessarily maximized when selecting $\psi$ that are not KLE directions, there are potential advantages for doing so. As we have noted, the error of the Taylor approximation also depends on the nonlinearity of the map $Q$. Thus, choosing $\psi$ based on directions with strong nonlinearity in addition to large variance can be beneficial. To this end, we consider $\psi$ obtained from solving the generalized Hessian eigenvalue problem \eqref{eq:hessian_ghep}. That is, we take the dominant eigenvector $\phi_1$ obtained from 
\begin{equation}\label{eq:dominant_ghep}
	D^2 Q(\bar{m}) \phi_1 = \lambda_1 \cC^{-1} \phi_1 
\end{equation}
as the decomposition direction $\psi$. Intuitively, this represents a balance between the nonlinearity, which informed by the Hessian, and variance, which is informed by the covariance operator. We refer to this as the Hessian eigenvalue problem (HEP)-based decomposition, in contrast to the KLE-based decomposition. 

We remark that problem \eqref{eq:dominant_ghep} can again be computed using iterative or randomized methods in a matrix-free manner. 
Moreover, in the 1D mixture approximations considered in this work, the mean $\bar{m}$ of the original Gaussian $\mathcal{N}(\bar{m}, \cC)$ will also be the mean of a subsequent mixture component $\mathcal{N}(\bar{m}_i, \cC_i)$ (i.e. $\bar{m}_i = \bar{m}$). 
Thus, state and adjoint solutions at $\bar{m}$ can be re-used for the risk measure computations at the mixture component 
such that no additional unique linear solves are needed to construct the HEP-based expansion when compared to the KLE-based expansion.

\subsection{Decomposition along multiple directions}
So far, the presented mixture approximations involve decomposing along a single dominant direction in parameter space. If multiple dominant directions are present, we also can simultaneously decompose each of the dominant directions. 

We begin with the case where all the decomposition 
directions are KLE modes, letting $\{\varphi_k\}_{k=1}^{r_K}$ denote the $r_K$ dominant KLE modes. 
This can be done as a tensor product of 1D mixture decompositions.
That is, for each of the directions $\varphi_k$, we use a 1D mixture approximation with $\{w_{i_k}, \mu_{i_k}, \sigma_{i_k} \}_{i_k = 1}^{N_k}$ as the means, variances, and weights, where $N_k$ is the number of 1D components in the $k^{th}$ direction.
We introduce a multi-index $\bs{i} = (i_1, \dots, i_{r_K})$, where each component $i_k$ of the multi-index $\bs{i}$ denotes the 1D mixture index along direction $k$. This leads to a total of 
$N_{\mix} = \prod_{k=1}^{r_K} N_k$ 
Gaussian mixture components, 
$\nu_{\bs{i}} = \cN(\bar{m}_{\bs{i}}, \cC_{\bs{i}})$, which have weights, means, and covariances
\begin{align}
\label{eq:kl_mix_multi_weight}
w_{\bs{i}} &= \prod_{k=1}^{r_K} w_{i_{k}}, \\
\label{eq:kl_mix_multi_mean}
\bar{m}_{\bs{i}} &= \bar{m} + \sum_{k=1}^{r_K} \mu_{i_k} \sqrt{\lambda_k} \varphi_k, \\
\label{eq:kl_mix_multi_cov}
\cC_{\bs{i}} &= \cC + \sum_{k=1}^{r_K} (\sigma_{i_k}^2 - 1) \lambda_k \varphi_k \otimes \varphi_k.
\end{align}

In this case, for each component, the variance along the direction $\varphi_k$ is reduced by a factor of $\sigma_{i_k}^2$ for each of the $k = 1, \dots, r_K$ decomposition directions. However, due to the tensor product nature of the approximation, the number of mixture components $N_{\mix}$ increases rapidly with the number of decomposition directions $r$. Thus, this approach is best restricted to only the first few KLE modes.

As shown in \cite{VittaldevRussell16}, mixture approximations can also be constructed recursively along arbitrary directions. That is, we can first decompose $\nu = \cN(\bar{m}, \cC)$ into a mixture 
\[ \nu_{\mix}^{(1)} = \sum_{i=1}^{N_{\mix}^{(1)}} w_i^{(1)} \nu_i^{(1)} = \sum_{i=1}^{N_{\mix}^{(1)}}w_i^1 \cN(\bar{m}_i^{(1)}, \cC_i^{(1)})
\]
along some direction $\psi^{(1)}$. 
Subsequently, components in the mixture, $\nu_i^{(1)} = \cN(\bar{m}_i^{(1)}, \cC_i^{(1)})$, can also be further approximated by
\[
\nu_i^{(1)} = \cN(\bar{m}_i^{(1)}, \cC_i^{(1)}) \approx \sum_{j=1}^{N_{\mix,i}^{(2)}} 
	w_{i,j}^{(2)} \nu_{i,j}^{(2)} = \sum_{j=1}^{N_{\mix},i} w_{i,j}^{(2)} \nu_{i,j}^{(2)},
\]
each along a second mixture direction $\psi^{(2)}_i$. 
Here, the superscript indicates the generation of the mixture, i.e., the number of decomposition directions used to produce it. This process can be repeated to a desired mixture size (number of components). 
Figure~\ref{fig:splitting} illustrates the different mixture construction strategies, showing the PDFs of a 2D Gaussian and those of its mixture approximations obtained via decomposing along a KLE direction, decomposing along an arbitrary direction, and a recursively constructed mixture where a mixture component is further decomposed along a second direction.
We note that for practical purposes, we consider only decompositions along a single dominant direction, 
since the number of mixture components, 
and hence the number of PDE solves involved in the mixture Taylor approximation, can scale prohibitively with the number of directions (exponentially in the full tensor product case). 
However, we do discuss the theoretical mixture approximation errors for the general case in Section~\ref{sec:analysis}.

\begin{figure}[htpb!]
	\centering
	\setlength{\tabcolsep}{1pt}
	\begin{tabular}{c c}
	\multirow{2}{*}[0.8in]{
	\includegraphics[width=0.22\textwidth]{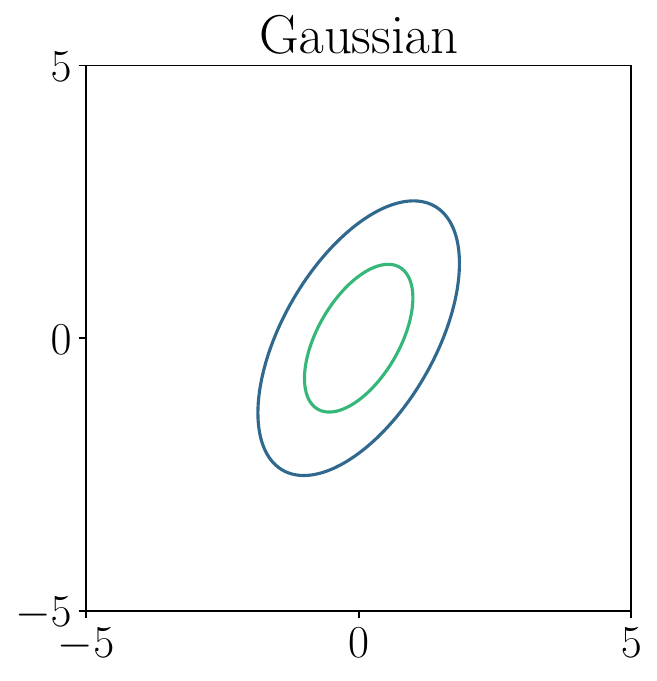}
	}
	& 
		\begin{tabular}{c c c}
		\includegraphics[width=0.22\textwidth]{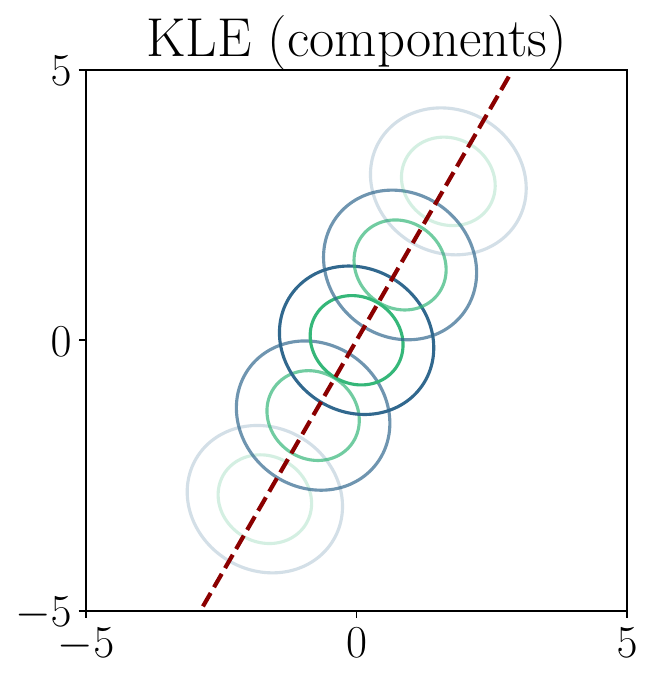}
		& 
		\includegraphics[width=0.22\textwidth]{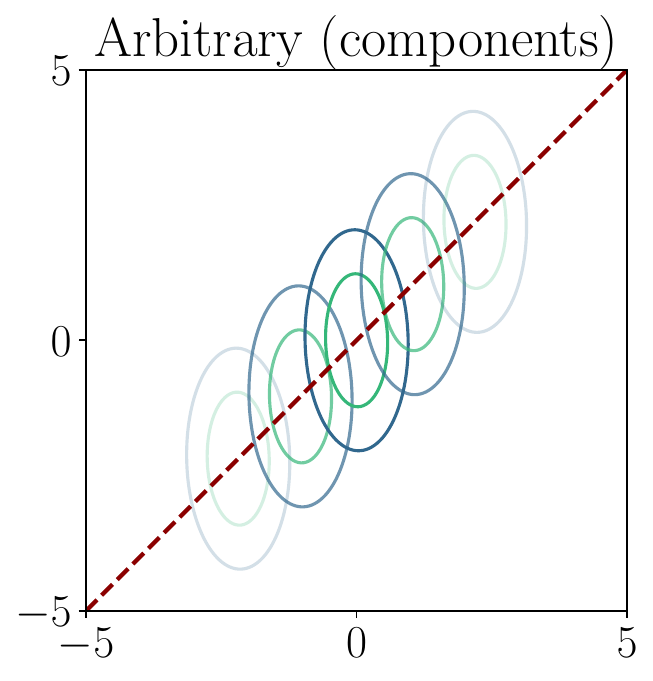}
		& 
		\includegraphics[width=0.22\textwidth]{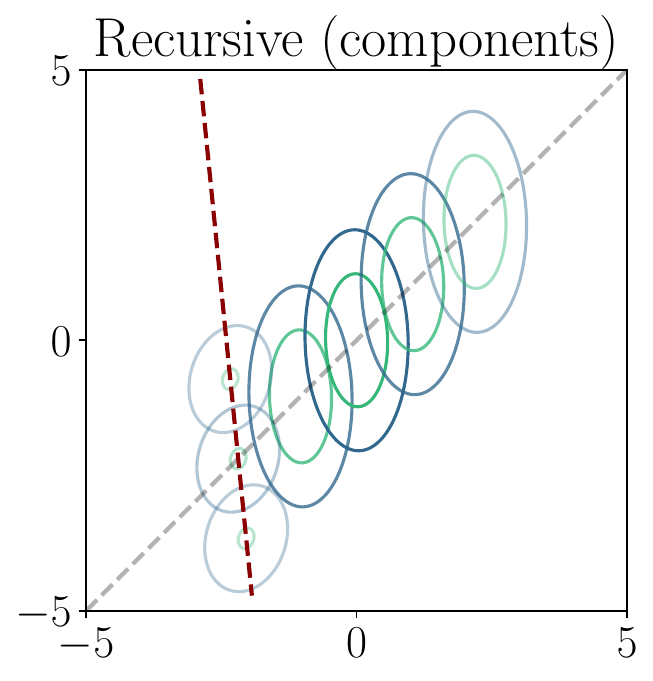}
		\\
		\includegraphics[width=0.22\textwidth]{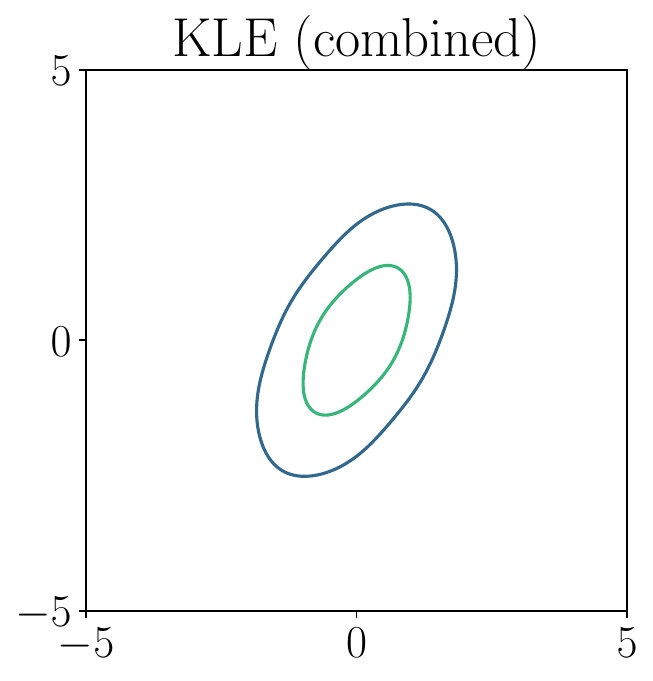}
		& 
		\includegraphics[width=0.22\textwidth]{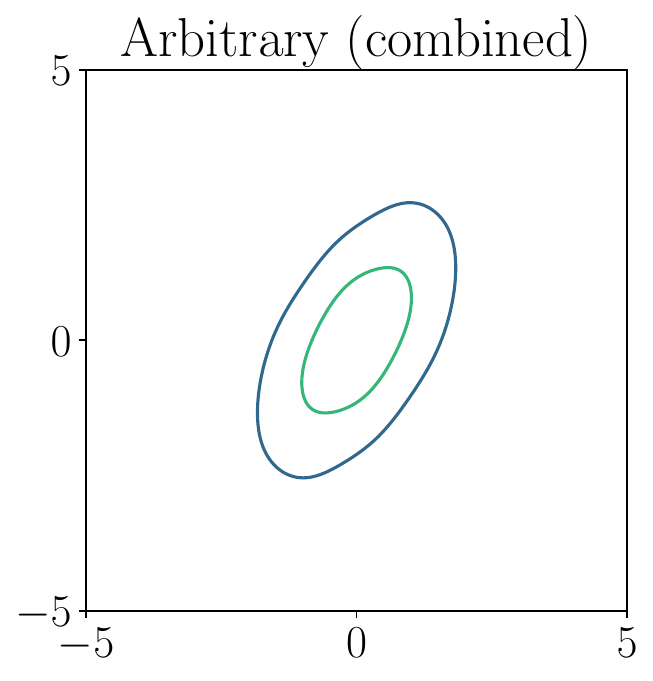}
		& 
		\includegraphics[width=0.22\textwidth]{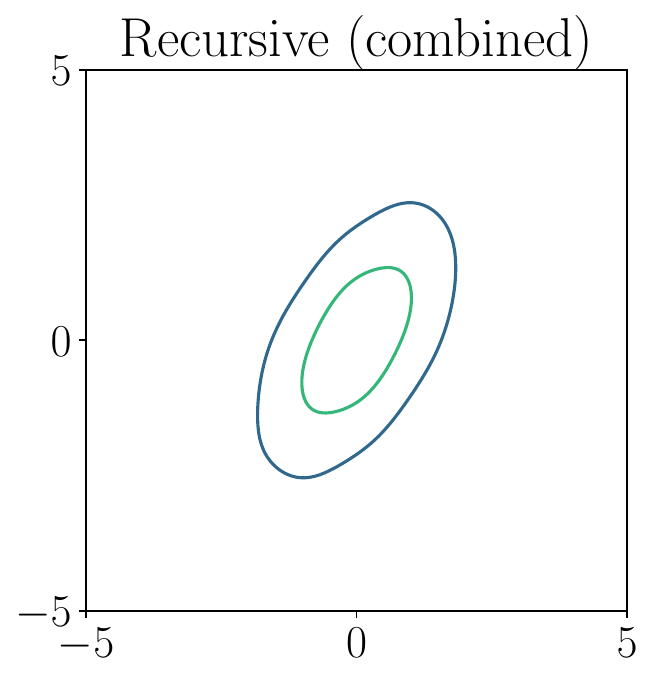}

		\end{tabular}
	\end{tabular}
	\caption{Gaussian mixture approximations of a 2D Gaussian distribution by 1D mixture decompositions along the dominant KLE direction, an arbitrary direction, and recursively by further decomposing a mixture component along a second direction. Top row shows the PDFs of the individual mixture components, where the opacity corresponds to the component weight and the dashed lines are the decomposition directions.. The bottom row shows the total mixture PDFs.}
	\label{fig:splitting}
\end{figure}

\section{Analysis of approximation errors}\label{sec:analysis}
\subsection{Gaussian mixture approximations of Gaussian measures}
The quality of the overall risk measure approximation will depend on the accuracy of the mixture approximation. 
Therefore, it is of interest to analyze the approximation errors committed by the 1D decomposition strategy presented in Section \ref{sec:gm}. 
We begin with an existence result for the 1D approximation for the standard Gaussian $\nu_0 = \cN(0,1)$. 
It is well known that Gaussian mixtures with increasing 
numbers of components and decreasing variances can approximate probability density functions (PDFs) to arbitrary accuracy in $L^1(\bR)$ or $L^{\infty}(\bR)$ (e.g. Theorem 1.1, \cite{Bacharoglou10}).
In particular, the $L^1(\bR)$ norm of the PDFs corresponds to the total variation (TV) distance, which, for two probability measures $\nu_{a}, \nu_{b}$ is given by
\begin{equation}
	\tv(\nu_{a}, \nu_{b}) = \sup_{A \in \Sigma} |\nu_{a}(A) - \nu_{b}(A)|.
\end{equation}
However, such results typically do not provide a relation between the number of mixture components and the variance of the components.
Thus, to show that the ansatz of $\sigma = N_{\mix}^{-p}$ used by \cite{VittaldevRussell16} to generate the mixture approximations is a well-motivated one, 
we provide the following result in the specific context of approximating Gaussian PDFs.
\begin{proposition}\label{thm:gm_1d}
Let $\nu_0 = \cN(0,1)$ be the standard normal distribution on $\bR$ with the PDFs 
$\pi_0(x) = \exp(-x^2/2)/\sqrt{2\pi}$. 
Given $p \in (0,1)$, for any $\epsilon > 0$, there exists some $N^* \in \bN$ such that for any $N \geq N^*$, there exists a corresponding Gaussian mixture approximation with $N$ components,
$\nu_{0,N} := \sum_{i=1}^{N} w_{N}^{i} \cN(\mu_{N}^{i}, \sigma_N^2)$ 
with uniform variance $\sigma_N = N^{-p}$ such that 
\begin{equation}
	\tv(\nu_0, \nu_{0,N}) \leq \epsilon.
\end{equation}
\end{proposition}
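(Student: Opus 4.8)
The plan is to prove the result by an explicit construction, since the proposition only asserts the existence of \emph{some} mixture with the prescribed variance scaling $\sigma_N = N^{-p}$. First I would reduce the total variation distance to an $L^1$ distance between densities via $\tv(\nu_0, \nu_{0,N}) = \frac{1}{2}\|\pi_0 - \pi_{0,N}\|_{L^1}$, so that it suffices to build a mixture density $\pi_{0,N}$ that is close to $\pi_0$ in $L^1$. The key structural observation is the convolution identity for Gaussians: provided $\sigma_N < 1$ (which holds for all large $N$ since $p > 0$), we may write $\pi_0 = \rho_N * k_{\sigma_N}$, where $k_{\sigma_N}$ is the density of $\cN(0, \sigma_N^2)$ and $\rho_N$ is the density of $\cN(0, 1 - \sigma_N^2)$. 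A Gaussian mixture with common variance $\sigma_N^2$ is then exactly $\hat\rho_N * k_{\sigma_N}$, where $\hat\rho_N = \sum_i w_N^i \delta_{\mu_N^i}$ is the discrete mixing measure; thus the task becomes discretizing the smooth ``outer'' Gaussian $\rho_N$ by $N$ atoms so that, after convolution by the narrow kernel $k_{\sigma_N}$, the two densities remain $L^1$-close.

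Concretely, I would place the means $\mu_N^i$ on a uniform grid of $N$ points in a truncation interval $[-R_N, R_N]$ with spacing $h_N = 2 R_N / N$, and assign renormalized quadrature weights $w_N^i \propto \rho_N(\mu_N^i)\, h_N$; these are nonnegative and sum to one, so $\nu_{0,N} = \sum_i w_N^i \cN(\mu_N^i, \sigma_N^2)$ is a genuine mixture. To bound $\|(\rho_N - \hat\rho_N) * k_{\sigma_N}\|_{L^1}$ I would use duality: for any test function $g$ with $\|g\|_\infty \le 1$, one has $\int \big((\rho_N - \hat\rho_N)*k_{\sigma_N}\big)\, g = \int G \, d(\rho_N - \hat\rho_N)$ with $G = g * k_{\sigma_N}$. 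The smoothed test function satisfies $\|G\|_\infty \le 1$ and $\|G'\|_\infty \le \|g\|_\infty \|k_{\sigma_N}'\|_{L^1} \le C/\sigma_N$, so the $L^1$ error equals the worst-case quadrature error, over Lipschitz integrands of constant $O(\sigma_N^{-1})$, of the midpoint rule $(\mu_N^i, w_N^i)$ applied against the weight $\rho_N$.

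It then remains to estimate three contributions. The midpoint quadrature error on $[-R_N, R_N]$ is bounded by $C\, R_N\, h_N\, \sigma_N^{-1} = C'\, R_N^2\, N^{p-1}$; the truncation mass $\int_{|y| > R_N} \rho_N$ decays faster than any power of $N$ once $R_N \to \infty$; and the weight renormalization contributes at most the truncation mass. Taking, say, $R_N = \log N$, all three terms vanish as $N \to \infty$, whence $\tv(\nu_0, \nu_{0,N}) \to 0$ and the claimed $N^*$ exists for each $\epsilon$.

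The main obstacle, and the point where the hypothesis $p \in (0,1)$ is essential, is the competition in the bound $R_N^2 N^{p-1}$ between the grid refinement $h_N \sim R_N/N$ and the blow-up $\sigma_N^{-1} = N^p$ in the Lipschitz constant of the smoothed test function: convergence requires $h_N \ll \sigma_N$, i.e.\ that the grid resolves the mixture kernel, which is precisely $p < 1$ (while $p > 0$ guarantees $\sigma_N < 1$ for the convolution split). I would emphasize that a naive Young's-inequality bound $\|(\rho_N - \hat\rho_N)*k_{\sigma_N}\|_{L^1} \le \|\rho_N - \hat\rho_N\|_{\tv}$ is useless here, since a discrete and a continuous measure are always at maximal total-variation distance; the smoothing by $k_{\sigma_N}$ is indispensable, and tracking how its regularizing effect degrades as $\sigma_N \to 0$ is the crux of the estimate.
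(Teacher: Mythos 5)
Your proposal is correct, but it takes a genuinely different route from the paper's proof. The paper also constructs an equally spaced mixture, but places the means on a fixed interval $(-L,L)$ with weights proportional to $\pi_0(\mu_N^k)$ itself (not the deconvolved density), and then analyzes the ratio $\pi_N/\pi_0$ directly: after completing the square it shows, via a Riemann-sum-on-a-moving-window argument and the squeeze theorem, that the ratio converges pointwise to $1/\nu_0(B_L)$ inside $(-L,L)$ and to $0$ outside, with a uniform bound $2/\nu_0(B_L)$; the TV bound then follows from dominated convergence together with a two-stage choice of $L$ and $N$. Your argument instead exploits the exact convolution identity $\pi_0=\rho_N*k_{\sigma_N}$, reduces the problem to discretizing the mixing density $\rho_N=\cN(0,1-\sigma_N^2)$, and controls the $L^1$ error by duality against smoothed test functions with $\|G'\|_\infty\lesssim\sigma_N^{-1}$. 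This buys two things the paper's proof does not provide: an explicit rate, $O\bigl(R_N^2N^{p-1}\bigr)$ plus a superpolynomially small truncation term, and a transparent identification of why $p<1$ is needed (the grid spacing $h_N\sim R_N/N$ must resolve the kernel width $\sigma_N=N^{-p}$). The paper's hand computation with $s_N=\sigma_N\sqrt{1-\sigma_N^2}$ is essentially the same deconvolution algebra done implicitly, but because its weights sample $\pi_0$ rather than $\rho_N$, the interior limit of the density ratio is $1/\nu_0(B_L)$ rather than $1$, forcing the extra bookkeeping with $L$. One minor point to tighten in your write-up: the renormalization error is bounded by the truncation mass \emph{plus} the quadrature error for the constant integrand (which involves only the Lipschitz constant of $\rho_N$, uniformly bounded once $\sigma_N^2\le 1/2$); both vanish, so the conclusion stands.
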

The proof of this proposition is constructive, making use of equally spaced Gaussian mixture components, and is presented in Appendix~\ref{appendix:proofs}. This result verifies the approximation capabilities of Gaussian mixtures with uniform variance $\sigma = N_{\mix}^{-p}$ in the context of approximating Gaussians, and justifies the strategy presented in \cite{VittaldevRussell16}. 

We then turn our attention to the approximation of multivariate Gaussians using the one-directional decomposition. 
\begin{proposition}[Dimension-independent mixture approximation error]\label{thm:gm}
Let $\nu = \cN(\bar{m}, \cC)$ be a non-degenerate Gaussian in $\bR^{n}$, and $\nu_{\mix} = \sum_{i=1}^{N} w_i \nu_i$ be the mixture approximation constructed along the direction $\psi = \cC^{1/2} \varphi$, from the 1D mixture with weights, means, and standard deviations $\{w_i, \mu_i, \sigma_i\}_{i=1}^{N}$ by \eqref{eq:arb_mix_mean_phi}--\eqref{eq:arb_mix_cov_phi} (equivalently, \eqref{eq:arb_mix_mean_psi}--\eqref{eq:arb_mix_cov_psi}),  where $\varphi \in \bR^{n}$ is a unit vector such that $\varphi^{\transpose} \varphi = 1$. That is, $\nu_i = \cN(\bar{m}_i, \cC_i)$ with 
$\bar{m}_i = \bar{m} + \mu_i \cC^{1/2} \varphi$ and 
$\cC_i = \cC^{1/2} (\cI + (\sigma_{i}^2 - 1)\varphi \otimes \varphi) \cC^{1/2}$.
Then, the TV distance between the Gaussian and its mixture approximation is given by 
\begin{equation}
	\tv(\nu, \nu_{\mix}) = \tv(\nu_0, \nu_{0, \mix})
\end{equation}
where $\nu_0 = \cN(0,1)$ and $\nu_{0, \mix} = \sum_{i=1}^{N} w_i \cN(\mu_i, \sigma_{i}^2)$ are the 1D standard Gaussian and its mixture approximations, respectively.
\end{proposition}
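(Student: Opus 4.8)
The plan is to exploit the invariance of the total variation distance under invertible measurable transformations, thereby reducing the $n$-dimensional comparison to the one-dimensional one through a sequence of changes of variables followed by a product-measure factorization.

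First I would whiten the problem. Since $\nu$ is non-degenerate, $\cC^{1/2}$ is invertible, so the affine map $T(x) = \bar{m} + \cC^{1/2} x$ is a bijection of $\bR^n$. Because the total variation distance is invariant under such bijections (they merely relabel the sample space, so the supremum over measurable sets in its definition is preserved), we have $\tv(\nu, \nu_{\mix}) = \tv(T^{-1}_\# \nu, T^{-1}_\# \nu_{\mix})$. A direct computation of the pushforward under $T^{-1}(y) = \cC^{-1/2}(y - \bar{m})$ gives $T^{-1}_\# \nu = \cN(0, \cI)$ and $T^{-1}_\# \nu_i = \cN(\mu_i \varphi, \cI + (\sigma_i^2 - 1)\varphi \otimes \varphi)$, using $\bar{m}_i = \bar{m} + \mu_i \cC^{1/2}\varphi$ and $\cC_i = \cC^{1/2}(\cI + (\sigma_i^2-1)\varphi\otimes\varphi)\cC^{1/2}$. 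This removes all dependence on $\bar{m}$ and $\cC$, leaving only the unit vector $\varphi$.

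Next I would rotate so that $\varphi$ aligns with the first coordinate axis. Choosing an orthogonal $R$ with $R\varphi = e_1$, invariance of $\tv$ under the bijection $R$ together with the rotation-invariance of the standard Gaussian ($R_\# \cN(0,\cI) = \cN(0,\cI)$) transforms each component into $\cN(\mu_i e_1, \cI + (\sigma_i^2-1) e_1 \otimes e_1) = \cN(\mu_i e_1, \mathrm{diag}(\sigma_i^2, 1, \dots, 1))$. At this stage both measures possess product structure over the splitting $\bR^n = \bR \times \bR^{n-1}$: the standard Gaussian factors as $\cN(0,1) \otimes \cN(0, \cI_{n-1})$, while each mixture component factors as $\cN(\mu_i, \sigma_i^2) \otimes \cN(0, \cI_{n-1})$, so the full mixture is $\left(\sum_i w_i \cN(\mu_i, \sigma_i^2)\right) \otimes \cN(0, \cI_{n-1}) = \nu_{0,\mix} \otimes \cN(0, \cI_{n-1})$.

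The final step is a factorization lemma: for product measures sharing a common factor $\rho$, one has $\tv(\eta_1 \otimes \rho, \eta_2 \otimes \rho) = \tv(\eta_1, \eta_2)$. Writing $\tv$ as one-half the $L^1$ distance of the Lebesgue densities, the density of the shared factor $\rho$ factors out of the integrand and integrates to one over the transverse coordinates, leaving exactly $\tv(\nu_0, \nu_{0,\mix})$. Each individual step is routine; the conceptual crux, and the main thing to verify with care, is that the covariance algebra in the whitening and rotation steps genuinely collapses the $n-1$ transverse directions into the \emph{identical} Gaussian factor in both measures. This shared-factor cancellation is precisely what makes the final TV distance independent of the ambient dimension $n$.
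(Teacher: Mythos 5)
Your proposal is correct and follows essentially the same route as the paper's proof: the composition of your whitening and rotation steps is exactly the paper's change of variables $z = \Phi^{\transpose}\cC^{-1/2}(m-\bar{m})$, and your product-factorization lemma is the paper's marginalization over the transverse coordinates $y=(z_2,\dots,z_n)$. The only difference is presentational—you invoke TV invariance under bijections and a shared-factor cancellation abstractly, while the paper carries out the identical reduction by explicit computation with the Gaussian densities.
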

\begin{proof}
	Let $\pi$, $\pi_{mix}$, and $\pi_i$  denote the PDFs of the reference Gaussian $\nu$, mixture approximation $\nu_{\mix}$, and mixture components $\nu_i$, respectively. 
	We have 
	\begin{align*}
	\pi(m) &= \frac{1}{(2\pi)^{n/2} \det(\cC)^{1/2}} 
		\exp\left( -\frac{1}{2}(\bar{m} - \bar{m})^{\transpose} \cC^{-1}(m - \bar{m}) \right), \\
	\pi_i(m) &= \frac{1}{(2\pi)^{n/2} \det(\cC_i)^{1/2}} 
		\exp\left( -\frac{1}{2}(\bar{m} - \bar{m}_i)^{\transpose} \cC_i^{-1}(m - \bar{m}_i) \right),
	\end{align*}
	noting that in this case, $\cC$ and $\cC_{i}$ correspond to the covariance matrices of the original and mixture component Gaussian distributions.
	We also note that 
	\[ 
	\det(\cC_i) = \det(\cC^{1/2}(\cI_{\bR^{n}} + (\sigma_i^2 - 1) \varphi \otimes \varphi) \cC^{1/2}) =  \sigma_i^2 \det(\cC),
	\]
	where $\cI_{\bR^{n}}$ corresponds to the identity matrix on $\bR^{n}$, $\det(\cdot)$ is the matrix determinant, and $(\cdot)^{\transpose}$ is the vector transpose.
	We then use the following change of variables,
	$
		z = \Phi^{\transpose} \cC^{-1/2}(m - \bar{m}),
	$
	where $\Phi = [\varphi_1, \dots, \varphi_n] \in \bR^{n \times n}$ is an orthonormal basis of $\bR^{n}$ with the first column $\varphi_1 = \varphi$. Thus, we have 
	\[ 
	(m - \bar{m})^{\transpose} \cC^{-1} (m - \bar{m}) 
	= \left(\Phi^{\transpose} \cC^{-1/2}(m - \bar{m}) \right)^{\transpose} \! \! \left(\Phi^{\transpose} \cC^{-1/2}(m - \bar{m}) \right)
	= z^{\transpose} z
	\]
	since $\Phi^{\transpose} \Phi = \Phi \Phi^{\transpose} = \cI_{\bR^{n}}$. Similarly, 
	\begin{align*}
	(m - \bar{m}_i)^{\transpose} \cC_i^{-1} (m - \bar{m}_i) &= 
		\left(\cC^{-1/2}(m - \bar{m} - \mu_i \cC^{1/2}\varphi) \right)^{\transpose} \!\!
		\left((\cI_{\bR^{n}} - \varphi \otimes \varphi) + \frac{1}{\sigma_i^2} \varphi \otimes \varphi \right) \\
		& \qquad \left(\cC^{-1/2}(m - \bar{m} - \mu_i \cC^{1/2}\varphi) \right) \\
	&= (\Phi z - \mu_i \varphi)^{\transpose} 
		\left((\cI_{\bR^{n}} - \varphi \otimes \varphi) + \frac{1}{\sigma_i^2} \varphi \otimes \varphi \right)
		(\Phi z - \mu_i \varphi) \\
	&= z^{\transpose} \Phi^{\transpose}(\cI_{\bR^{n}} - \varphi \otimes \varphi) \Phi z 
		+ \frac{1}{\sigma_i^2} (\Phi z - \mu_i \varphi)^{\transpose} \varphi \otimes \varphi (\Phi z - \mu_i \varphi) \\
	&= z^{\transpose}({\cI}_{\bR^{n}} - e_1 \otimes e_1) z 
		+ \frac{1}{\sigma_i^2}\left((z^{\transpose} e_1)^2 - 2\mu_i z^{\transpose} e_1 + \mu_i^2 \right),
	\end{align*}
	where we have made use of the orthonormality of $\Phi$ with $\varphi=\varphi_1$. Here, $e_i = (1, 0 \dots, 0)$ refers to the first standard basis vector in $\bR^n$. Moreover, for $z = (z_1, \dots, z_n)$, we introduce $y = (z_2, \dots, z_n) \in \bR^{n - 1}$ to denote all but the first component of $z$. This allows us to write
	\[
	(m - \bar{m}_i)^{\transpose} \cC_i^{-1} (m - \bar{m}_i) = y^{\mathsf{T}} y + \frac{(z_1 - \mu_i)^2}{\sigma_i^2}
	\]
	Substituting this change of variables into the integral for TV in terms of the PDFs gives 
	\[
	\tv(\nu, \nu_{\mix}) = \frac{1}{2} \int \left| 
		1 - \frac{
		\sum_{i=1}^{N} \frac{1}{(2\pi)^{n/2} \sigma_i} \exp\left(-\frac{y^{\transpose} y}{2} - \frac{(z_1 - \mu_i)^2}{2\sigma_i^2}\right)
		}{
		\frac{1}{(2\pi)^{n/2}} \exp\left(-\frac{y^{\transpose} y}{2} - \frac{z_1^2}{2}\right)
		}
	\right|
		\frac{1}{(2\pi)^{n/2}} \exp\left(-\frac{y^{\transpose} y}{2} - \frac{z_1^2}{2}\right) dz.
	\]
	Cancelling and marginalizing over $y = (z_2, \dots, z_n)$ gives 
	\[
	\tv(\nu, \nu_{\mix}) = \frac{1}{2} \int \left| 
		1 - \frac{
		\sum_{i=1}^{N} \frac{1}{(2\pi)^{1/2} \sigma_i} \exp\left(- \frac{(z_1 - \mu_i)^2}{2\sigma_i^2}\right)
		}{
		\frac{1}{(2\pi)^{1/2}} \exp\left(- \frac{z_1^2}{2}\right)
		}
	\right|
		\frac{1}{(2\pi)^{1/2}} \exp\left(- \frac{z_1^2}{2}\right) dz_1.
	\]
	We recognize that this is simply 
	\[
	\tv(\nu, \nu_{\mix}) = \frac{1}{2} \int \left|1 - \frac{\pi_{0, \mix}(z_1)}{\pi_0(z_1)} \right| \pi_0(z_1) dz_1 
	= \tv(\nu_0, \nu_{0, \mix}),
	\]
	where $\pi_0$, $\pi_{0, \mix}$ are the 1D PDFs of $\nu_0$ and $\nu_{0, \mix}$, respectively. 
\end{proof}

Thus, the error committed by the approximation of $\nu$ along a particular direction reduces to the approximation error for the 1D Gaussian. 
Importantly, this does not depend on the direction $\varphi$, the dimension $n$, or the covariance structure $\cC$.
In particular, when $\cM$ is infinite dimensional and $\cN(\bar{m}, \cC)$ is a Gaussian random field, its finite dimensional discretizations are Gaussian random vectors in $\bR^{n}$. Hence, Proposition \ref{thm:gm} applies to such cases, where the TV error depends only on the choice of the 1D Gaussian mixture approximation $\nu_{0, \mix}$ of $\nu_0$, and not the discretization dimension.

Errors for multi-directional decompositions can then be obtained by applying the single directional results. We begin with a lemma.
\begin{lemma}\label{lemma:recursive_mix}
Let $\nu_{\mix} = \sum_{i=1}^{N} w_i \cN(\bar{m}_i, \cC_i)$ be a Gaussian mixture. Without loss of generality, consider approximating the $N^{th}$ component, $\nu_N = \cN(\bar{m}_N, \cC_N)$, by a Gaussian mixture 
$\nu_{N,\mix} = \sum_{i=1}^{M}w_{N,i}\;\cN(\bar{m}_{N,i}, \cC_{N,i})$, 
to obtain a new Gaussian mixture approximation of $\nu_{\mix}$,
\begin{equation}
\nu_{\mix}' 
= \sum_{i=1}^{N-1} w_i \cN(\bar{m}_i, \cC_i) + w_N \nu_{N,\mix}
= \sum_{i=1}^{N-1} w_i \cN(\bar{m}_i, \cC_i) + w_N \sum_{j=1}^{M} w_{N,j} \cN(\bar{m}_{N,j}, \cC_{N,j}).
\end{equation}
Then the TV distance between the two mixtures is given by
\begin{equation}
\tv(\nu_{\mix}, \nu_{\mix}') = w_N \; \tv(\nu_{N}, \nu_{N,\mix}).
\end{equation}
\end{lemma}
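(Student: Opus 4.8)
The plan is to work directly with the measure-theoretic (variational) definition of total variation, $\tv(\nu_a,\nu_b) = \sup_{A \in \Sigma} |\nu_a(A) - \nu_b(A)|$, rather than with probability density functions. This is the natural choice here because the mixture components are general Gaussian measures, possibly on an infinite-dimensional $\cM$; by the Feldman--Hajek dichotomy, Gaussians with distinct covariances are typically mutually singular, so the components need not share a common dominating measure and a density-based expression for the TV distance may be unavailable. The variational definition, by contrast, applies to any pair of probability measures.

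First I would isolate the part common to both mixtures. Writing the shared (positive) measure as $\mu_{\mathrm{com}} := \sum_{i=1}^{N-1} w_i \cN(\bar{m}_i, \cC_i)$, which has total mass $1 - w_N$, the two mixtures decompose as $\nu_{\mix} = \mu_{\mathrm{com}} + w_N \nu_N$ and $\nu_{\mix}' = \mu_{\mathrm{com}} + w_N \nu_{N,\mix}$ in the sense of set functions. Evaluating on an arbitrary measurable set $A \in \Sigma$, the common contribution $\mu_{\mathrm{com}}(A)$ cancels in the difference, leaving
\begin{equation}
\nu_{\mix}(A) - \nu_{\mix}'(A) = w_N \bigl( \nu_N(A) - \nu_{N,\mix}(A) \bigr).
\end{equation}

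Next I would take absolute values and use that $w_N \geq 0$ is a fixed scalar, so it factors out of both the absolute value and the subsequent supremum over $A \in \Sigma$, giving
\begin{equation}
\tv(\nu_{\mix}, \nu_{\mix}') = \sup_{A \in \Sigma} w_N \bigl| \nu_N(A) - \nu_{N,\mix}(A) \bigr| = w_N \sup_{A \in \Sigma} \bigl| \nu_N(A) - \nu_{N,\mix}(A) \bigr| = w_N \, \tv(\nu_N, \nu_{N,\mix}),
\end{equation}
which is precisely the claimed identity.

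I do not anticipate a substantive obstacle: the argument rests entirely on the exact cancellation of the $N-1$ unchanged components and on the homogeneity of the supremum under multiplication by the nonnegative constant $w_N$. The only point meriting care is the choice of the variational definition over the density formula $\tfrac{1}{2}\int |p_a - p_b|\,dx$; the density version would reproduce the same computation, since the mixture density difference collapses to $w_N(\pi_N - \pi_{N,\mix})$, but it would implicitly require a common dominating measure, which is exactly what may fail in the infinite-dimensional Gaussian random field setting. Using the set-function definition sidesteps this issue entirely and keeps the result valid regardless of the dimension of $\cM$.
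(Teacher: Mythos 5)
Your proposal is correct and follows essentially the same route as the paper's proof: both cancel the unchanged components in $\nu_{\mix}(A) - \nu_{\mix}'(A)$ for arbitrary measurable $A$ and then pull the nonnegative constant $w_N$ out of the supremum in the variational definition of TV. Your added remark about avoiding a common dominating measure in the infinite-dimensional setting is a nice justification for using the set-function definition, but the core argument is the same.
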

\begin{proof}
This follows directly from the definition of TV, since for any $A \in \cF$,
$$
\tv(\nu_{\mix}, \nu_{\mix}') = \sup_{A \in \cF} |\nu_{\mix}(A)- \nu_{mix}'(A)| 
	=  \sup_{A \in \cF} |w_N \nu_{N}(A) - w_N \nu_{\mix}^N(A)|
	= w_N \; \tv(\nu_{N}, \nu_{N,\mix}).
$$
We note that when the mixture approximation $\nu_{N,\mix} \approx \nu_N$ is constructed using the 1D decomposition of $\nu_N$ along some direction $\cC^{1/2}\varphi$, this error $\tv(\nu_{N}, \nu_{N,\mix})$ further reduces to the 1D TV distance as in Proposition \ref{thm:gm}.
\end{proof}

This can be directly applied to the recursive reconstruction algorithms of \cite{VittaldevRussell16} using a triangle inequality. That is, for a Gaussian mixture constructed by recursively expanding its components along single directions, the final TV distance can be computed as a cumulative sum over the TV errors of the 1D mixture approximations used every time when a mixture component is further decomposed, multiplied by the component's weight prior to its decomposition (as in Lemma \ref{lemma:recursive_mix}).

We provide a concrete use of this idea to derive the approximation error for the tensor product of mixture approximations along KLE modes. First note that we can extend the result of Lemma \ref{lemma:recursive_mix} as follows.
\begin{lemma}\label{lemma:recursive_entire}
Let $\nu_{\mix} = \sum_{i=1}^{N} w_i \cN(\bar{m}_i, \cC_i)$ be a Gaussian mixture. Suppose we approximate each component $\nu_i = \cN(\bar{m}_i, \cC_i)$, by a Gaussian mixture $\nu_{\mix,j} = \sum_{i=1}^{N_j}w_{j,i} \; \cN(\bar{m}_{j,i}, \cC_{j,i})$, to obtain a new Gaussian mixture,
$\nu_{mix}' 
= \sum_{j=1}^{N} w_j \nu_{\mix, j}.
$
Then the TV distance between the two mixtures is bounded by 
\[
\tv(\nu_{\mix}, \nu_{\mix}') \leq \sum_{j=1}^{N} w_j \tv(\nu_j, \nu_{\mix, j})
\]
\end{lemma}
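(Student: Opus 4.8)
The plan is to reduce the multi-component case to $N$ successive single-component replacements, each governed by Lemma~\ref{lemma:recursive_mix}, and then to chain the resulting total-variation distances with the triangle inequality. First I would introduce a sequence of intermediate mixtures that interpolates between $\nu_{\mix}$ and $\nu_{\mix}'$ by replacing the components $\nu_j$ one index at a time. For $k = 0, 1, \dots, N$, define
\[
\nu^{(k)} := \sum_{j=1}^{k} w_j \, \nu_{\mix,j} + \sum_{j=k+1}^{N} w_j \, \nu_j,
\]
so that $\nu^{(0)} = \nu_{\mix}$ and $\nu^{(N)} = \nu_{\mix}'$. Each $\nu^{(k)}$ is a convex combination of Gaussians (flattening the already-replaced mixture components $\nu_{\mix,j}$ into their Gaussian constituents) whose weights sum to one, since $\sum_{j} w_j = 1$ and each $\nu_{\mix,j}$ has weights summing to one. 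Hence every $\nu^{(k)}$ is a genuine probability measure, and in fact again a Gaussian mixture.

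Next I would compare consecutive intermediates. The measures $\nu^{(k-1)}$ and $\nu^{(k)}$ differ only in their $k$-th term: the former carries $w_k \nu_k$ while the latter carries $w_k \nu_{\mix,k}$, with all remaining terms identical. This is exactly the configuration of Lemma~\ref{lemma:recursive_mix} (with the replaced component taken to be index $k$ rather than $N$, which is permitted by the ``without loss of generality'' statement there), so that the same cancellation of the common terms in the supremum defining the TV distance yields the exact equality
\[
\tv(\nu^{(k-1)}, \nu^{(k)}) = w_k \, \tv(\nu_k, \nu_{\mix,k}), \qquad k = 1, \dots, N.
\]

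Finally I would chain these estimates using the triangle inequality, which holds because $\tv$ is a metric on the space of probability measures:
\[
\tv(\nu_{\mix}, \nu_{\mix}') = \tv(\nu^{(0)}, \nu^{(N)}) \leq \sum_{k=1}^{N} \tv(\nu^{(k-1)}, \nu^{(k)}) = \sum_{k=1}^{N} w_k \, \tv(\nu_k, \nu_{\mix,k}),
\]
which is the claimed bound. The argument is essentially bookkeeping rather than hard analysis; the only points needing care are verifying that each intermediate $\nu^{(k)}$ is a legitimate probability measure (so the triangle inequality applies) and recognizing that each single-index transition matches the hypotheses of Lemma~\ref{lemma:recursive_mix} verbatim. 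The reason the conclusion is an inequality rather than the equality of the single-component case is precisely the use of the triangle inequality, which can be strict once more than one component is perturbed.
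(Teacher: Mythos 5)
Your proof is correct and follows essentially the same route as the paper's: successive single-component replacements controlled by Lemma~\ref{lemma:recursive_mix}, chained together with the triangle inequality for the TV metric. Your explicit construction of the intermediate probability measures $\nu^{(k)}$ is a slightly tidier piece of bookkeeping than the paper's recursion (which briefly compares sub-probability remainders), but the underlying argument is identical.
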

\begin{proof}
This follows from the triangle inequality and Lemma \ref{lemma:recursive_mix}, applied successively for the approximation of each component of $\nu_{\mix}$, 
\begin{align*}
\tv(\nu_{\mix}, \nu_{\mix}') &= \tv \left(\sum_{j=1}^{N} w_i \nu_i, \sum_{j=1}^{N} w_i \nu_{\mix,i} \right) \\
&\leq \tv \left(\sum_{j=1}^{N} w_j \nu_j, \sum_{j=1}^{N-1}\nu_j + w_N \nu_{\mix, N} \right)
+ \tv \left(\sum_{j=1}^{N-1}\nu_j + w_N \nu_{\mix, N},  \sum_{j=1}^{N} w_i \nu_{\mix,i}\right) \\
&\leq w_N \tv(\nu_N, \nu_{\mix,N})
+ \tv \left(\sum_{j=1}^{N-1}\nu_j,  \sum_{j=1}^{N-1} w_i \nu_{\mix,i}\right).
\end{align*}
Repeating this process for the remaining $N-1$ components yields the desired result.
\end{proof}

Again, when the individual mixtures $\nu_{\mix,j} \approx \nu_j$ are constructed using a 1D mixture $\nu_{\mix,0}$ along some direction $\cC^{1/2} \varphi$, as in Proposition \ref{thm:gm}, the distance reduces to the weighted sum over the individual 1D Gaussian mixture approximations.
For example, when the same 1D mixture approximation $\nu_{0, \mix}$ is used for all mixture components $i \leq N$, we simply have 
\[
\tv(\nu_{\mix}, \nu_{\mix}') \leq \tv(\nu_{0}, \nu_{0, \mix}).
\]
since $\sum_{j=1}^{N}w_j = 1$.
This allows us to derive an error estimate for the multi-directional decomposition case. 
\begin{proposition}
 	Let $\nu = \cN(\bar{m}, \cC)$ be a non-degenerate Gaussian on $\bR^{n}$. Suppose $\nu_{\mix} = \sum_{\bs{i}} w_{\bs{i} \cN(\bar{m}_{\bs{i}}, \cC_{\bs{i}})}$ is a mixture approximation constructed as a tensor product along the first $r_K$ KLE modes using \eqref{eq:kl_mix_multi_weight}--\eqref{eq:kl_mix_multi_cov}. Then, the overall approximation error is given by 
	\begin{equation}
 	\tv(\nu, \nu_{\mix}) = \sum_{k=1}^{r_K} \tv(\nu_{0}, \nu_{0, \mix}^{(k)}),
	\end{equation}
	where $\nu_{0, \mix}^{(k)}$ is the mixture approximation of the standard 1D Gaussian used for the $k$th KLE mode with weights, means, and standard deviations given by $\{w_{i_k}, \mu_{i_k}, \sigma_{i_k}\}_{i_k=1}^{N_k}$.
\end{proposition}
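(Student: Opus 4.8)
The plan is to prove the equality by whitening all of the KLE directions simultaneously, reducing $\tv(\nu,\nu_{\mix})$ to a product of one-dimensional problems, and then peeling off the decomposition directions one at a time. First I would extend the change of variables used in the proof of Proposition~\ref{thm:gm} to the entire eigenbasis: taking $\Phi = [\varphi_1, \dots, \varphi_n]$ to be the full orthonormal eigenbasis of $\cC$ and setting $z = \Phi^{\transpose}\cC^{-1/2}(m - \bar{m})$, the reference Gaussian $\nu$ maps to the standard Gaussian $\cN(0,\cI_{\bR^n})$, whose density factorizes as $\prod_{j=1}^{n}\pi_0(z_j)$. A coordinatewise repetition of the computation in Proposition~\ref{thm:gm} shows that under this map the tensor-product component $\nu_{\bs{i}}$ from \eqref{eq:kl_mix_multi_weight}--\eqref{eq:kl_mix_multi_cov} becomes a product Gaussian with marginal $\cN(\mu_{i_k},\sigma_{i_k}^2)$ in each of the first $r_K$ coordinates and $\cN(0,1)$ in every remaining coordinate, carrying weight $w_{\bs{i}} = \prod_{k=1}^{r_K} w_{i_k}$. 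Summing over the multi-index $\bs{i}$ then factorizes, so the full mixture density equals $\big(\prod_{k=1}^{r_K}\pi_{0,\mix}^{(k)}(z_k)\big)\prod_{j>r_K}\pi_0(z_j)$.

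Since the trailing coordinates $z_j$ with $j > r_K$ carry the identical factor $\pi_0$ in both densities, they integrate out of the TV integral and contribute a factor of one, so $\tv(\nu,\nu_{\mix})$ reduces to the $r_K$-dimensional total variation between $\bigotimes_{k=1}^{r_K}\nu_0$ and $\bigotimes_{k=1}^{r_K}\nu_{0,\mix}^{(k)}$. I would then introduce the telescoping sequence of intermediate mixtures $\nu = \nu^{(0)}, \nu^{(1)}, \dots, \nu^{(r_K)} = \nu_{\mix}$, where $\nu^{(k)}$ replaces the factor $\pi_0$ by $\pi_{0,\mix}^{(\ell)}$ in coordinates $\ell \le k$ and retains $\pi_0$ in the rest. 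Each consecutive pair $\nu^{(k-1)},\nu^{(k)}$ differs only in the single factor along coordinate $k$, while the shared factors along all other coordinates are identical probability densities that pull out of the integral and integrate to one. This is exactly the single-component situation of Lemma~\ref{lemma:recursive_mix}, giving the exact per-direction identity $\tv(\nu^{(k-1)},\nu^{(k)}) = \tv(\nu_0,\nu_{0,\mix}^{(k)})$.

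It then remains to combine these per-direction increments into the total, and I expect this to be the main obstacle. Applying Lemma~\ref{lemma:recursive_entire} (equivalently, the triangle inequality along the telescoping path) immediately yields the subadditive bound $\tv(\nu,\nu_{\mix}) \le \sum_{k=1}^{r_K}\tv(\nu_0,\nu_{0,\mix}^{(k)})$, so the entire content of the stated equality is that this accumulation is lossless. The route I would take is to return to the factorized integrand and expand the density difference as the telescoping sum $\sum_{k=1}^{r_K}\big(\prod_{\ell<k}\pi_{0,\mix}^{(\ell)}(z_\ell)\big)\big(\pi_0(z_k)-\pi_{0,\mix}^{(k)}(z_k)\big)\big(\prod_{\ell>k}\pi_0(z_\ell)\big)$, and then attempt to show that the $r_K$ summands share a consistent sign pattern on the relevant region of $\bR^{r_K}$, so that the absolute value of the sum coincides with the sum of absolute values and each term integrates to its one-dimensional TV. Establishing this sign alignment is precisely what separates the claimed equality from the generic subadditive behavior of total variation on product measures, and it is the delicate step to which I would devote the most care.
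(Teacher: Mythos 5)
Your whitening-and-factorization reduction is correct, and your telescoping argument cleanly establishes
\[
\tv(\nu,\nu_{\mix}) \;\le\; \sum_{k=1}^{r_K}\tv\bigl(\nu_0,\nu_{0,\mix}^{(k)}\bigr).
\]
This is in substance the same decomposition the paper uses: the paper views the tensor-product mixture as built recursively (decompose along mode $1$, then decompose every component of the resulting mixture along mode $2$, and so on) and applies Lemma~\ref{lemma:recursive_entire} at each stage, whereas you work directly with the factorized densities. Your route is, if anything, slightly sharper: because the difference of consecutive intermediate densities factorizes, your per-direction increment $\tv(\nu^{(k-1)},\nu^{(k)}) = \tv(\nu_0,\nu_{0,\mix}^{(k)})$ is an exact identity, while the paper's per-stage lemma only gives an upper bound. (Small attribution quibble: your per-step situation is that of Lemma~\ref{lemma:recursive_entire}, not Lemma~\ref{lemma:recursive_mix}, since every component is decomposed at stage $k$; but your identity holds anyway by the factorization.) Both arguments then lose exactness at the same place — combining the stages by the triangle inequality.

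The step you flag as the crux — sign alignment of the telescoping summands, which would upgrade the inequality to the stated equality — should be abandoned, because it is false whenever at least two directions carry nonzero 1D error. Each difference $\pi_0 - \pi_{0,\mix}^{(k)}$ integrates to zero, so it takes both signs on sets of positive measure; since the $k$th telescoping term has the sign of $\pi_0(z_k)-\pi_{0,\mix}^{(k)}(z_k)$ (all other factors being positive densities), you can place $z_j$ in the positive region for one index and $z_k$ in the negative region for another, so the summands disagree in sign on a product set of positive measure and the $L^1$ norm of the sum is strictly smaller than the sum of the $L^1$ norms. One also sees the equality cannot hold in general because the right-hand side can exceed $1$ while total variation never does. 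The honest conclusion is that the proposition's ``$=$'' should be ``$\le$'': note that the paper's own proof suffers from the same defect, since Lemma~\ref{lemma:recursive_entire} is itself only an upper bound, so the recursive argument in the paper likewise establishes only the inequality. Up to and including the subadditive bound, your proposal is therefore as complete a proof as the paper's; the remaining gap is in the statement, not in your argument.
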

\begin{proof}
We start by noting that this mixture can be recursively constructed by decomposing the initial Gaussian first in the $k=1$ mode. This is followed by decomposing the components of the new mixture in the $k=2$ mode, and then continuing for all $r_K$ directions.
Thus, the error can be computed by successively applying Lemma \ref{lemma:recursive_entire} recursively for each decomposition procedure for $k = 1, \dots, r_K$. Since 
the the same 1D approximations are used for a particular $k$, each successive decomposition direction contributes an additional error of $\tv(\nu_{0}, \nu_{0,\mix}^{(k)})$. 
\end{proof}

\subsection{Mixture Taylor approximation error}
We now consider the combined error of the Gaussian mixture and Taylor approximations, which depends on both the mixture approximation error and the Taylor truncation error. 
This is summarized in the following estimate for the expectation. 
\begin{proposition}
Let $Q_{s}$ be the $s^{th}$ order Taylor approximation of the quantity of interest $Q$, $\nu_{\mix} = \sum_{i=1}^{N_{\mix}}w_i \nu_i$, $\nu_i = \cN(\bar{m}_i, \cC_i)$ be a Gaussian mixture approximation of the $\nu = \cN(\bar{m}, \cC)$, and $Q$ has finite second moments for each $\nu_i$ and $\nu$. Let
\[
\widehat{Q}_{s, \gm} = \sum_{i=1}^{N_{\mix}} w_i \bE_{\nu_i}[Q_{s,i}] 
\]
be the mixture Taylor approximation estimator of order $s$,
where $Q_{s,i}$ are the $s^{th}$ Taylor approximations centered at the means $\bar{m}_i$. Then, the approximation error has the bound
\begin{equation}\label{eq:mean_mix_bound}
| \bE_{\nu}[Q] - \widehat{Q}_{s,\gm} | \leq
	2 (\bE_{\nu}[Q^2] + \bE_{\nu_{\mix}}[Q^2])^{1/2} \tv(\nu, \nu_{\mix})^{1/2}
	+ \sum_{i=1}^{N_{\mix}} w_i | \bE_{\nu_i}[Q] - \bE_{\nu_i}[Q_{s,i}] |.
\end{equation}
\end{proposition}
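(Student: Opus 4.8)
The plan is to split the total error with the triangle inequality into a \emph{mixture approximation error} $|\bE_{\nu}[Q] - \bE_{\nu_{\mix}}[Q]|$, arising from replacing $\nu$ by $\nu_{\mix}$, and a \emph{Taylor truncation error} $|\bE_{\nu_{\mix}}[Q] - \widehat{Q}_{s,\gm}|$, arising from replacing $Q$ by its component-wise Taylor approximations. The second term is immediate: since $\nu_{\mix} = \sum_i w_i \nu_i$ is a mixture we have $\bE_{\nu_{\mix}}[Q] = \sum_i w_i \bE_{\nu_i}[Q]$ exactly, so that $\bE_{\nu_{\mix}}[Q] - \widehat{Q}_{s,\gm} = \sum_i w_i(\bE_{\nu_i}[Q] - \bE_{\nu_i}[Q_{s,i}])$, and a further triangle inequality with $w_i \geq 0$ produces exactly the second summand of \eqref{eq:mean_mix_bound}.

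The real work is in controlling the mixture term $\left| \int Q \, d(\nu - \nu_{\mix}) \right|$, where the difficulty is that $Q$ is unbounded, so the elementary estimate by $\|Q\|_{L^\infty} \tv(\nu, \nu_{\mix})$ is unavailable. I would instead pass to the signed measure $\mu = \nu - \nu_{\mix}$ and use its Jordan (Hahn) decomposition $\mu = \mu^+ - \mu^-$. Because $\nu$ and $\nu_{\mix}$ are both probability measures, $\mu(\Omega) = 0$, so $\mu^+(\Omega) = \mu^-(\Omega)$; and since the supremum in the definition of $\tv$ is attained on the Hahn positive set, $\mu^+(\Omega) = \mu^-(\Omega) = \tv(\nu, \nu_{\mix})$. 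Moreover $\mu^+ \leq \nu$ and $\mu^- \leq \nu_{\mix}$ as measures, which I would read off directly from the Hahn decomposition.

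With these facts in hand the estimate is a Cauchy--Schwarz argument. Writing $\int Q \, d\mu = \int Q \, d\mu^+ - \int Q \, d\mu^-$ and applying Cauchy--Schwarz to each piece gives, for instance, $\left|\int Q \, d\mu^+\right| \leq \left(\int Q^2 \, d\mu^+\right)^{1/2} \mu^+(\Omega)^{1/2} \leq \bE_{\nu}[Q^2]^{1/2} \tv(\nu, \nu_{\mix})^{1/2}$, using $\mu^+ \leq \nu$ together with $\mu^+(\Omega) = \tv(\nu, \nu_{\mix})$; the analogous bound for the $\mu^-$ piece uses $\mu^- \leq \nu_{\mix}$. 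Summing the two and bounding each second-moment square root by the combined quantity $(\bE_{\nu}[Q^2] + \bE_{\nu_{\mix}}[Q^2])^{1/2}$ yields the factor $2$ and the first summand of \eqref{eq:mean_mix_bound}. Finiteness of all the integrals involved is guaranteed by the hypothesis that $Q$ has finite second moments under each $\nu_i$ and under $\nu$, hence also under $\nu_{\mix}$.

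The main obstacle is exactly this passage from a total-variation bound, which natively controls integrals of \emph{bounded} functions, to a bound valid for the unbounded $Q$; the Jordan decomposition together with the domination relations $\mu^+ \leq \nu$ and $\mu^- \leq \nu_{\mix}$ is what allows the second moments to absorb the tails, and verifying these relations (and that $\mu^\pm(\Omega) = \tv(\nu, \nu_{\mix})$) is the one place requiring care. A fully equivalent route that avoids Hahn decomposition is to introduce a common dominating measure $\rho$ with densities $\pi, \pi_{\mix}$, bound $\left|\int Q(\pi - \pi_{\mix}) \, d\rho\right| \leq \int |Q| \, |\pi - \pi_{\mix}| \, d\rho$, factor $|\pi - \pi_{\mix}| = |\pi - \pi_{\mix}|^{1/2} |\pi - \pi_{\mix}|^{1/2}$, and apply Cauchy--Schwarz using $|\pi - \pi_{\mix}| \leq \pi + \pi_{\mix}$ and $\int |\pi - \pi_{\mix}| \, d\rho = 2\,\tv(\nu, \nu_{\mix})$. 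I would present whichever is cleaner, noting that both give the stated estimate (in fact with the slightly sharper constant $\sqrt{2}$, which $2$ dominates).
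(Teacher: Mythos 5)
Your proof is correct and follows essentially the same route as the paper: the identical triangle-inequality decomposition into a mixture error and a component-wise Taylor error, followed by the total-variation/second-moment bound on $| \bE_{\nu}[Q] - \bE_{\nu_{\mix}}[Q] |$. The only difference is that the paper simply cites this last inequality as a standard result (Lemma 1.30 of Law--Stuart--Zygalakis), whereas you prove it from scratch via the Jordan decomposition (or the equivalent density argument), correctly observing along the way that the sharper constant $\sqrt{2}$ is attainable.
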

\begin{proof}
We can decompose the error as 
\[
| \bE_{\nu}[Q] - \widehat{Q}_{s,\gm} | \leq
	| \bE_{\nu}[Q] - \bE_{\nu_{\mix}}[Q] | 
	+ \sum_{i=1}^{N_{\mix}} w_i | \bE_{\nu_i}[Q] - \bE_{\nu_i}[Q_{s,i}] |
\]
where $s$ is the order of the Taylor approximation (linear/quadratic), and $Q_{s,i}$ denotes the $s^{th}$ order Taylor approximation about the mean $\bar{m}_i$ of component $\nu_i$.
We can bound the mixture approximation error by a standard result from probability theory (e.g. Lemma 1.30, \cite{LawStuartZygalakis15}),
\[
| \bE_{\nu}[Q] - \bE_{\nu_{\mix}}[Q] | \leq 2(\bE_{\nu}[Q^2] + \bE_{\nu_{\mix}}[Q^2])^{1/2} \tv(\nu, \nu_{\mix})^{1/2}.
\]
The overall approximation error bound follows.
\end{proof}

We use this result to gain some intuition for the behavior of the approximation error by considering the quadratic Taylor approximation. Recall that for QoIs with bounded third order derivatives, the quadratic Taylor approximation error satisfies 
\[ 
| \bE_{\nu_i}[Q] - \bE_{\nu_i}[Q_{\qua,i}] | \leq \sqrt{3} K \tr(\cC_i)^{3/2}.
\]
Therefore, if the same covariance is used for each mixture component, i.e. $\cC_i = \cC_{\mix}$, we have 
\[
| \bE_{\nu}[Q] - \widehat{Q}_{\qua,\gm} | \leq
	2(\bE_{\nu}[Q^2] + \bE_{\nu_{\mix}}[Q^2])^{1/2} \tv(\nu, \nu_{\mix})^{1/2}
	+ \sqrt{3} K \tr(\cC_{\mix})^{3/2},
\]
since $\sum_{i=1}^{N_{\mix}} w_i = 1$. From this, we see that the mixture Taylor approximation works in a pre-asymptotic manner by reducing $\tr(\cC_{\mix})$ while maintaining a small $\tv(\nu, \nu_{\mix})$. For example, if $\nu_i$ is obtained by a 1D decomposition along the dominant KLE direction $\lambda_1, \phi_1$ using the 1D mixture given by $\{w_i, \mu_i, \sigma \}_{i=1}^{N_{\mix}}$ as in \eqref{eq:kl_mix_mean}--\eqref{eq:kl_mix_cov}, we have 
$ \tr(\cC_{\mix}) = \sigma^2 \lambda_1 + \sum_{k=2}^{\infty} \lambda_k. $
The effect of the variance reduction due to the mixture approximation can be seen by comparing this to the original error bound, $\sqrt{3} K \tr(\cC)^{3/2}$. We note
\[
\tr(\cC)^{3/2} - \tr(\cC_{\mix})^{3/2} = 
\left(\lambda_1 + \sum_{k=2}^{\infty} \lambda_k \right)^{3/2}
- \left(\sigma^2 \lambda_1 + \sum_{k=2}^{\infty} \lambda_k \right)^{3/2},
\]
for which we have the lower bound
\[
\tr(\cC)^{3/2} - \tr(\cC_{\mix})^{3/2} \geq
(1 - \sigma^2) \lambda_1^{3/2},
\]
where we have made use of the inequality $(1+x)^{3/2} - (\sigma^2 + x)^{3/2} \geq (1-\sigma^2)^{3/2}$ for $x \geq 0$ and $\sigma \in [0,1].$ Thus, compared to the single quadratic approximation about the mean $\bar{m}$, the reduction in the upper bound of the errors can be estimated as 
\[
	(1-\sigma^2) \sqrt{3} K \lambda_1^{3/2}
	- 2(\bE_{\nu}[Q^2] + \bE_{\nu_{\mix}}[Q^2])^{1/2} \tv(\nu, \nu_{\mix})^{1/2}.
\]
From this, we expect that the mixture Taylor approximation along a dominant KLE direction is effective when $\sigma$ and $\lambda_1$ are large, and $\tv(\nu, \nu_{\mix})$ can be made small.

For the CVaR, we can derive an analogous error estimate.
\begin{proposition}
Let $Q_{s}$ be the $s^{th}$ order Taylor approximation of the quantity of interest $Q$, $\nu_{\mix} = \sum_{i=1}^{N_{\mix}} \nu_i$ be a Gaussian mixture approximation of the $\nu$, for which $Q$ has a finite second moment, and 
\[
(\widehat{C}_{\alpha})_{s,\gm} = \min_{t \in \bR} t + \frac{1}{1-\alpha} \sum_{i=1}^{N_{\mix}} w_i \bE_{\nu_i}[(Q_{s,i} - t)^+]
\]
be the mixture Taylor approximation estimator of order $s$,
where $Q_{s,i}$ are the $s^{th}$ Taylor approximations centered at the means $\bar{m}_i$. Then, the approximation error has the bound
\begin{equation}\label{eq:cvar_mix_bound}
| \CVaR_{\alpha}[Q] - (\widehat{C}_{\alpha})_{s,\gm} | 
\leq \frac{1}{1-\alpha}\left( 
2(\bE_{\nu}[Q^2] + \bE_{\nu_{\mix}}[Q^2])^{1/2} \tv(\nu, \nu_{\mix})^{1/2} 
+ \sum_{i=1}^{N_{\mix}} w_i \bE_{\nu_i}[|Q - Q_{s,i}|]
\right) 
\end{equation}
for $0 < \alpha < 1$. 
\end{proposition}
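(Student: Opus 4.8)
The plan is to mirror the structure of the preceding proposition for the expectation, but to work at the level of the CVaR \emph{objective functionals} rather than bare expectations. For a measure $\rho$ write $f_{\rho}(t) = t + \frac{1}{1-\alpha}\bE_{\rho}[(Q-t)^+]$, so that the CVaR of $Q$ under $\rho$ equals $\min_{t} f_{\rho}(t)$, and set $\widehat{f}(t) = t + \frac{1}{1-\alpha}\sum_{i=1}^{N_{\mix}} w_i \bE_{\nu_i}[(Q_{s,i}-t)^+]$, so that $(\widehat{C}_{\alpha})_{s,\gm} = \min_{t} \widehat{f}(t)$. Inserting the intermediate quantity $\min_{t} f_{\nu_{\mix}}(t)$ (the exact CVaR under the mixture $\nu_{\mix}$) and applying the triangle inequality splits the total error into a mixture part $|\min_t f_{\nu} - \min_t f_{\nu_{\mix}}|$ and a Taylor part $|\min_t f_{\nu_{\mix}} - \min_t \widehat{f}|$.

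First I would record the elementary fact that functions with attained minima obey $|\min_t g_1 - \min_t g_2| \le \sup_{t} |g_1(t) - g_2(t)|$, together with the slightly sharper statement that the difference of minima is in fact controlled by $|g_1 - g_2|$ evaluated at either of the two minimizers. For the Taylor part, $f_{\nu_{\mix}}$ and $\widehat{f}$ differ only through the replacement of $Q$ by its surrogates inside the hinge, and since $x \mapsto (x-t)^+$ is $1$-Lipschitz \emph{uniformly in} $t$, this term is bounded independently of $t$ by
\[
|\min_t f_{\nu_{\mix}} - \min_t \widehat{f}| \le \frac{1}{1-\alpha}\sum_{i=1}^{N_{\mix}} w_i \bE_{\nu_i}\!\left[\,|(Q-t)^+ - (Q_{s,i}-t)^+|\,\right] \le \frac{1}{1-\alpha}\sum_{i=1}^{N_{\mix}} w_i \bE_{\nu_i}[|Q - Q_{s,i}|],
\]
which is exactly the second term of the claimed estimate.

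For the mixture part, the naive supremum over all $t$ diverges, since $\bE[((Q-t)^+)^2]$ grows like $t^2$ as $t \to -\infty$; I would therefore use the two-point form and evaluate the objective difference only at the (finite) minimizers, namely the VaR levels of $\nu$ and $\nu_{\mix}$. At such a $t$ it suffices to bound $\frac{1}{1-\alpha}|\bE_{\nu}[(Q-t)^+] - \bE_{\nu_{\mix}}[(Q-t)^+]|$, and applying Lemma~1.30 of \cite{LawStuartZygalakis15} to the test function $g = (Q-t)^+$ gives
\[
|\bE_{\nu}[(Q-t)^+] - \bE_{\nu_{\mix}}[(Q-t)^+]| \le 2\left(\bE_{\nu}[((Q-t)^+)^2] + \bE_{\nu_{\mix}}[((Q-t)^+)^2]\right)^{1/2} \tv(\nu, \nu_{\mix})^{1/2}.
\]

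The final step is to replace the second moments of $(Q-t)^+$ by those of $Q$ through the pointwise bound $0 \le (Q-t)^+ \le |Q|$, producing the first term of the claimed bound, after which the two pieces combine. I expect this last replacement to be the main obstacle: the inequality $(Q-t)^+ \le |Q|$ holds only for $t \ge 0$, so obtaining the clean, $t$-independent $\bE[Q^2]$-dependence requires either restricting attention to a nonnegative VaR level (e.g.\ after a normalization of $Q$) or carrying an extra lower-order term that is then absorbed. By contrast, the difference-of-minima reduction and the $1$-Lipschitz Taylor estimate are routine and valid for every $t$.
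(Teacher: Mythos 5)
Your decomposition into a mixture error and a Taylor error, the $1$-Lipschitz bound for the hinge function, and the use of Lemma~1.30 of \cite{LawStuartZygalakis15} all coincide with the paper's proof. However, the step you yourself flag as the ``main obstacle'' is a genuine gap, and neither of the workarounds you sketch actually delivers the stated bound: normalizing $Q$ changes $\bE_\nu[Q^2]$ and $\bE_{\nu_{\mix}}[Q^2]$ in the right-hand side, and restricting to the two minimizers does not help because the VaR levels can perfectly well be negative, in which case $(Q-t)^+\le |Q|$ fails and you are left with a $t$-dependent constant. ``Carrying an extra lower-order term that is then absorbed'' is not carried out and it is not clear how it would be absorbed into the claimed inequality.

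The missing idea is a simple shift. Since $t$ is a constant, adding it inside both expectations does not change their difference:
\[
\bE_{\nu}[(Q - t)^{+}] - \bE_{\nu_{\mix}}[(Q - t)^{+}]
= \bE_{\nu}[(Q - t)^{+} + t] - \bE_{\nu_{\mix}}[(Q - t)^{+} + t],
\]
and $(Q-t)^{+}+t = \max(Q,t)$ satisfies $|\max(Q,t)|\le |Q|$ whenever $t<0$ (if $Q\le t<0$ then $|t|\le|Q|$; otherwise $\max(Q,t)=Q$). Applying Lemma~1.30 to $\max(Q,t)$ for $t<0$, and to $(Q-t)^{+}$ directly for $t\ge 0$, yields the bound
$2(\bE_{\nu}[Q^2]+\bE_{\nu_{\mix}}[Q^2])^{1/2}\tv(\nu,\nu_{\mix})^{1/2}$
\emph{uniformly} in $t\in\bR$. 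With that uniformity your worry about the supremum diverging disappears, and you can use the plain estimate $|\min_t f - \min_t g|\le\sup_t|f(t)-g(t)|$ without ever invoking the two-point form or locating the minimizers. Everything else in your argument then goes through as written.
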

\begin{proof}
Let
\[
f(t) := t + \frac{1}{1-\alpha} \bE_{\nu}[(Q - t)^{+}] \text{ and }
g(t) := 
t + \frac{1}{1-\alpha} \sum_{i=1}^{N_{\mix}} w_i \bE_{\nu_i}[(Q_{s,i} - t)^+]
\]
be the cost functionals for the original CVaR and mixture Taylor CVaR approximation, respectively. Then
\begin{align*}
|f(t) - g(t)| &= \frac{1}{1-\alpha} \left| \bE_{\nu}[(Q - t)^{+}] - \sum_{i=1}^{N_{\mix}} w_i \bE_{\nu_i}[(Q_{s,i} - t)^{+}] \right|\\
&\leq \frac{1}{1-\alpha} \left( 
	|\bE_{\nu}[(Q - t)^{+}] - \bE_{\nu_{\mix}}[(Q - t)^{+}]| + \sum_{i=1}^{N_{\mix}} w_i \bE_{\nu_i}[|(Q - t)^{+} - (Q_{s,i} - t)^{+}|],
\right)
\end{align*}
where the error is decomposed into a mixture approximation error and individual Taylor approximation errors for the mixture components. Since the maximum function, $(\cdot)^{+}$, is Lipschitz continuous with Lipschitz constant one, we have 
\begin{equation}\label{eq:lipschitz}
	|(Q - t)^{+} - (Q_{s,i} - t)^{+}| \leq |(Q - t) - (Q_{s,i} - t)| = |Q - Q_{s,i}|.
\end{equation}
This leads to a conservative bound for the Taylor approximation errors
\[
\bE_{\nu_i}[|(Q - t)^{+} - (Q_{s,i} - t)^{+}|] \leq \bE_{\nu_i}[|Q - Q_{s,i}|].
\]
For the mixture approximation error, we consider the cases of $t \geq 0$ and $t < 0$ separately. 
When $t \geq 0$, under the assumption of finite second moments, we have
\[
|\bE_{\nu}[(Q - t)^{+}] - \bE_{\nu_{\mix}}[(Q - t)^{+}]| \leq 
2 ( \bE_{\nu}[|(Q - t)^{+}|^2] +  \bE_{\nu_{\mix}}[|(Q - t)^{+}|^2] )^{1/2} \tv(\nu, \nu_{\mix})^{1/2}.
\]
Since $|(Q - t)^{+}| \leq |Q|$ when $t \geq 0$, we can replace the upper bound by 
\[
2 ( \bE_{\nu}[Q^2] +  \bE_{\nu_{\mix}}[Q^2] )^{1/2} \tv(\nu, \nu_{\mix})^{1/2}. 
\]
When $t < 0$, we can obtain the same bound as in the $t \geq 0$ case by
\begin{align*}|\bE_{\nu}[(Q - t)^{+}] - \bE_{\nu_{\mix}}[(Q - t)^{+}]|  
	&= |\bE_{\nu}[(Q - t)^{+} + t] - \bE_{\nu_{\mix}}[(Q - t)^{+} + t]| \\
	&\leq 2 (\bE_{\nu}[|(Q - t)^{+} + t|^2] +  \bE_{\nu_{\mix}}[|(Q - t)^{+} + t|^2] )^{1/2} tv(\nu, \nu_{\mix})^{1/2} \\
	&\leq 2 (\bE_{\nu}[|Q|^2] +  \bE_{\nu_{\mix}}[|Q|^2] )^{1/2} \tv(\nu, \nu_{\mix})^{1/2},
\end{align*}
where we have used $|(Q-t)^{+} + t| \leq |Q|$ for $t < 0$.

Overall, for any $t \in \bR$, we have 
\begin{equation}
|f(t) - g(t) |
\leq \frac{1}{1-\alpha}\left( 
2(\bE_{\nu}[Q^2] + \bE_{\nu_{\mix}}[Q^2])^{1/2} \tv(\nu, \nu_{\mix})^{1/2}
+ \sum_{i=1}^{N_{\mix}} w_i \bE_{\nu_i}[|Q - Q_{s,i}|]
\right)
\end{equation}
as an upper bound.
As both minima are attained, we have
\[
|\CVaR_{\alpha}[Q] - (\widehat{C}_{\alpha})_{s,\gm} | = | \min_{t \in \bR} f(t) - \min_{t \in \bR} g(t) | \leq \sup_{t \in \bR}|f(t) - g(t)|, 
\]
which leads to the bound in \eqref{eq:cvar_mix_bound}.
\end{proof}

In this analysis, the CVaR error bound is made to look like the case for the expectation, with a constant that degrades as $\alpha \rightarrow 1$. However, for large values of $\alpha$, one expects $\VaR_{\alpha}$ to be large and hence several inequalities, such as \eqref{eq:lipschitz} become excessively conservative.

\section{Numerical results}\label{sec:numerical_results}

\subsection{Advection-diffusion-reaction equation with a log-normal conductivity field}
We first consider a semilinear advection-diffusion-reaction (ADR) equation with an uncertain conductivity field over the square domain $\Omega = (0,1)^2$,
\begin{align}
	-\nabla \cdot (e^m \nabla u) + \bs{v} \cdot \nabla u + a u^3 &= f \quad \text{in } \Omega, \\
	u &= 0 \quad \text{on } \Gamma_L,  \\ 
	e^m \nabla u \cdot \bs{n} &= 0 \quad \text{on } \Gamma \setminus \Gamma_L.
\end{align}
Here, $\bs{n}$ is the unit outward normal, $\Gamma$ is the domain boundary, and $\Gamma_L$ is the left boundary $x_1 = 0$. 
The velocity $\bs{v} = (0.1, 0.1)^{\transpose}$ and reaction coefficient $a = 0.01$ are assumed to be fixed, and $f$ is a Gaussian source centered at (0.25, 0.5). 
The random parameter, $m \sim \cN(\bar{m}, \cC)$ is a Gaussian random field with covariance of the form $\cC = (\delta -\gamma \Delta)^{-2}$ with Robin boundary conditions following \cite{DaonStadler18}. 
In this case, the resulting covariance function has a characteristic correlation length of $l_x = \sqrt{8 \gamma / \delta}$ and pointwise variance of $\sigma^2_{x} = \sqrt{\gamma \delta}$. 
A diagram of the problem along with a realization of $m$ and its corresponding state $u(m)$ are shown in Figure~\ref{fig:adr_sample} for the case with pointwise variance and correlation both equal to 1.

\begin{figure}[htpb!]
	\centering
	\begin{subfigure}{0.32\textwidth}
		\centering
		\begin{tikzpicture}
		\draw[thick] (0,0) -- (3,0) -- (3,3) -- (0,3) -- (0,0);
		\draw[very thick, blue] (0,3) -- (0,0);
		\draw[thick,-stealth] (1.5,0.75) -- (2.25,1.5);
		\node at (1.5, 0.5) {$\bs{v} = (0.1, 0.1)$};
		\node at (-0.3, 1.5) {$\Gamma_L$};

		\filldraw[black] (0.75,1.5) circle (2pt) node[anchor=north]{Source};
		\end{tikzpicture}
	\end{subfigure}
	\begin{subfigure}{0.25\textwidth}
		\centering
		\includegraphics[width=\textwidth]{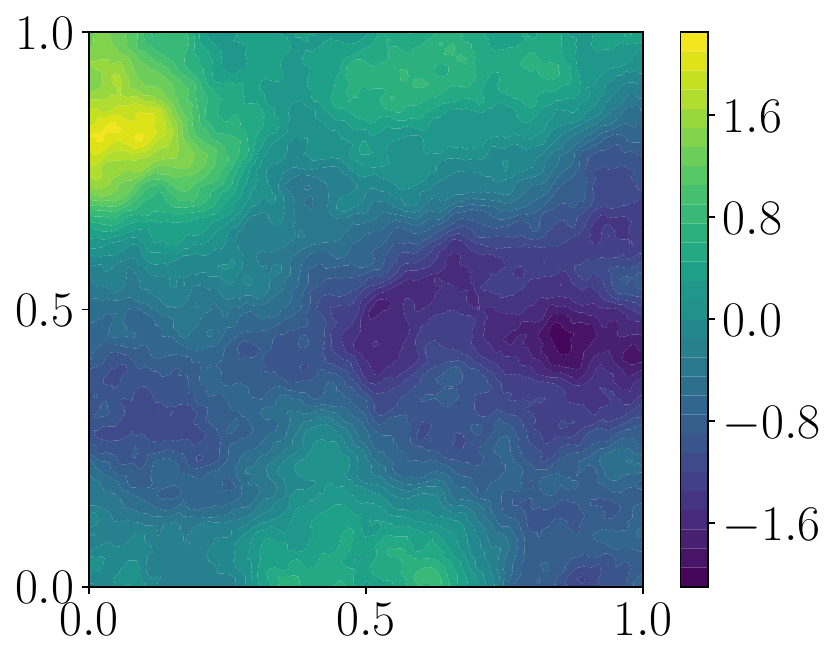}
	\end{subfigure}
	\begin{subfigure}{0.25\textwidth}
		\centering
		\includegraphics[width=\textwidth]{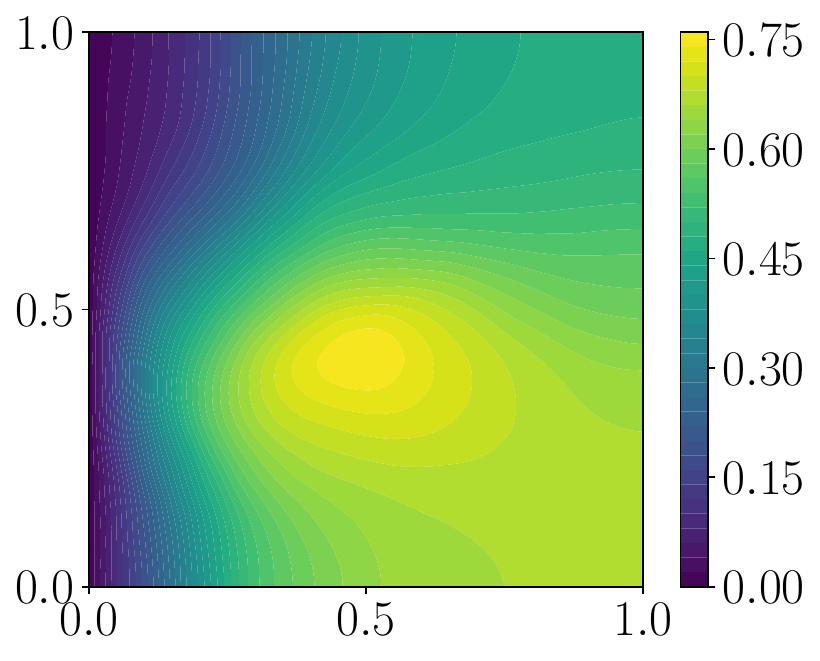}
	\end{subfigure}
	\caption{Left: diagram of the PDE domain and set. Center: a sample of the parameter $m \sim \cN(\bar{m}, \cC)$. Right: corresponding sample PDE solution $u = u(m)$. Samples are obtained for correlation length and pointwise variances equal to 1.}
	\label{fig:adr_sample}
\end{figure}

We consider three quantities of interest,
\begin{align}
	Q_{L^2} &= \int_{\Omega} u^2 \dx, \\
	Q_{L^3} &= \int_{\Omega} u^3 \dx, \\
	Q_{\text{energy}} &= \int_{\Omega} e^{m} |\nabla u|^2 \dx,
\end{align}
all of which depend non-linearly on the state, and the third also depends explicitly on the parameter. 

Figure \ref{fig:adr_spectra} shows the spectra of the covariance operator $\cC$ and the covariance-preconditioned Hessian operators $\cH = \cC^{1/2} D^2 Q(\bar{m}) \cC^{1/2}$ corresponding to each quantity of interest, with correlation lengths $l_x = 0.1, 0.25, 0.5, 1, 2$, and pointwise variance $\sigma_x^2 = 1$. 
Note that though the eigenvalues decay for all the operators, the decay tends to be slower with smaller correlation lengths as they have larger contributions from high frequency modes. 
Despite this, the covariance-preconditioned Hessian operator tends to reveal a dominant eigenvector whose eigenvalue is several times larger than the subsequent eigenvector.
For illustration, the dominant eigenvectors of $\cC$ and $\cH$ are compared in Figure \ref{fig:adr_eigenvectors} for the $L^2$ QoI and the case of $l_x = 1$ and $\sigma_x^2 = 1$.
We observe that the eigenvector of $\cH$ captures information about the underlying mapping, 
and is influenced by the source and boundary conditions.

\begin{figure}[h!]
	\centering
	\begin{subfigure}{0.24\textwidth}
		\centering
		\includegraphics[width=\textwidth]{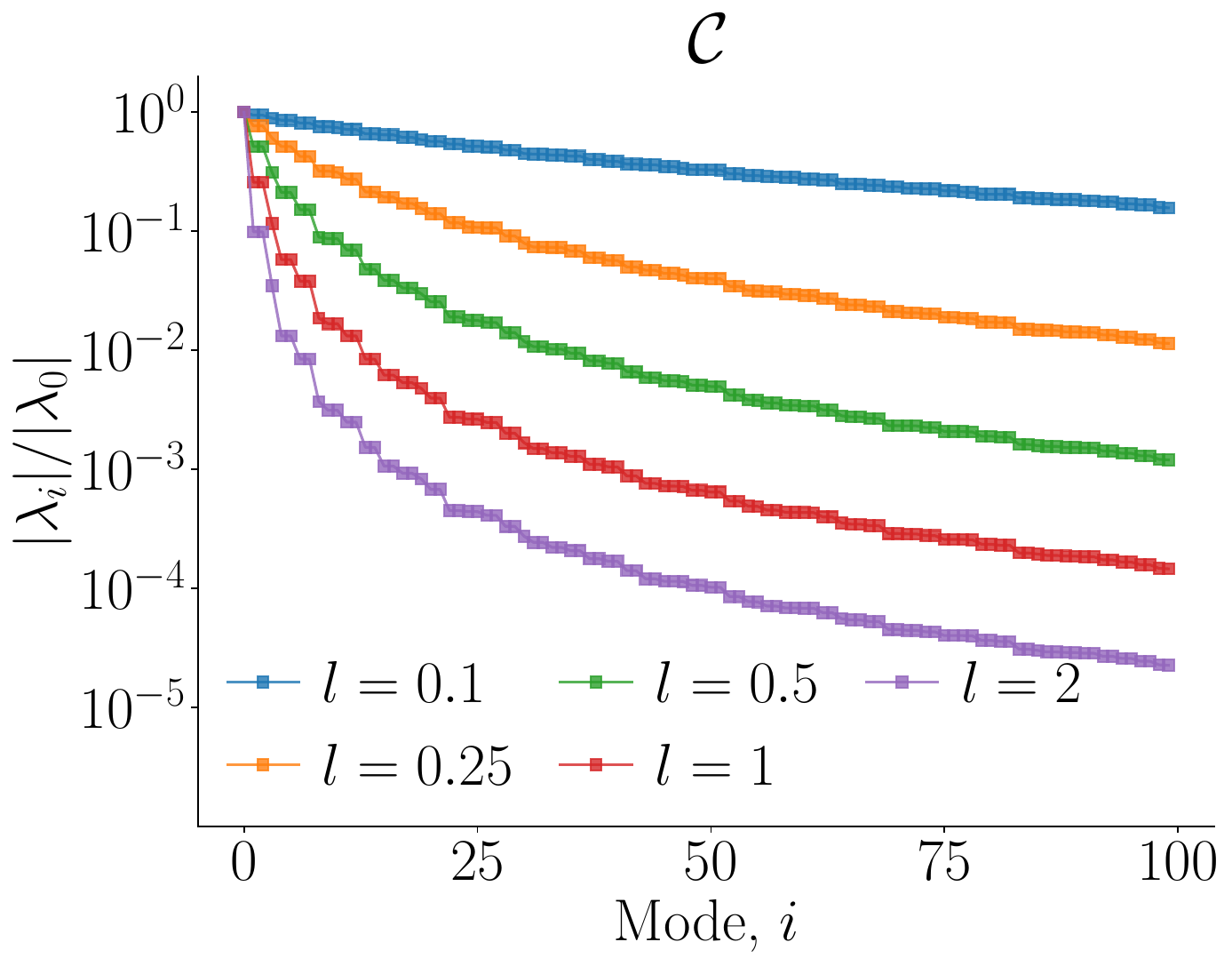}
	\end{subfigure}
	\begin{subfigure}{0.24\textwidth}
		\centering
		\includegraphics[width=\textwidth]{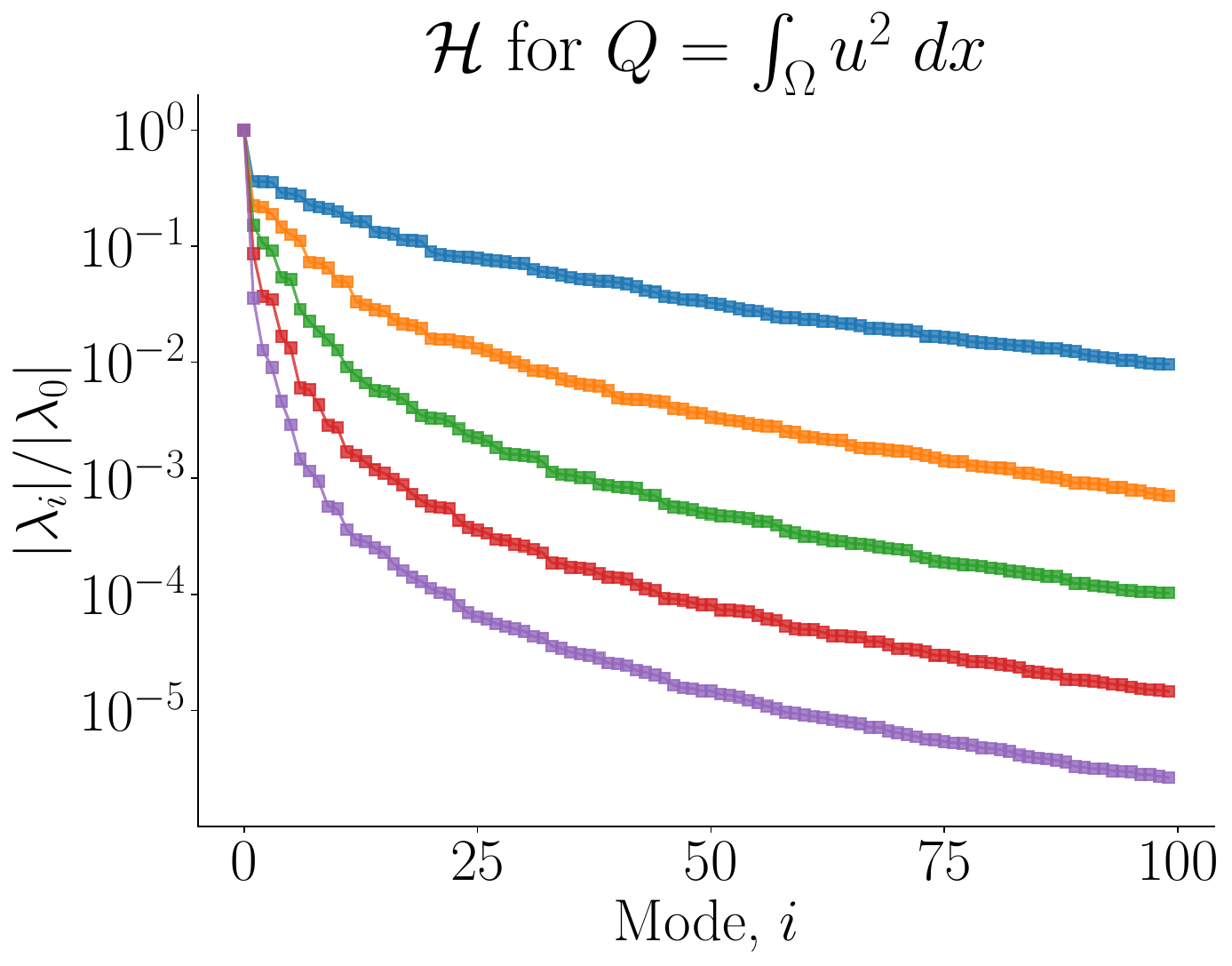}
	\end{subfigure}
	\begin{subfigure}{0.24\textwidth}
		\centering
		\includegraphics[width=\textwidth]{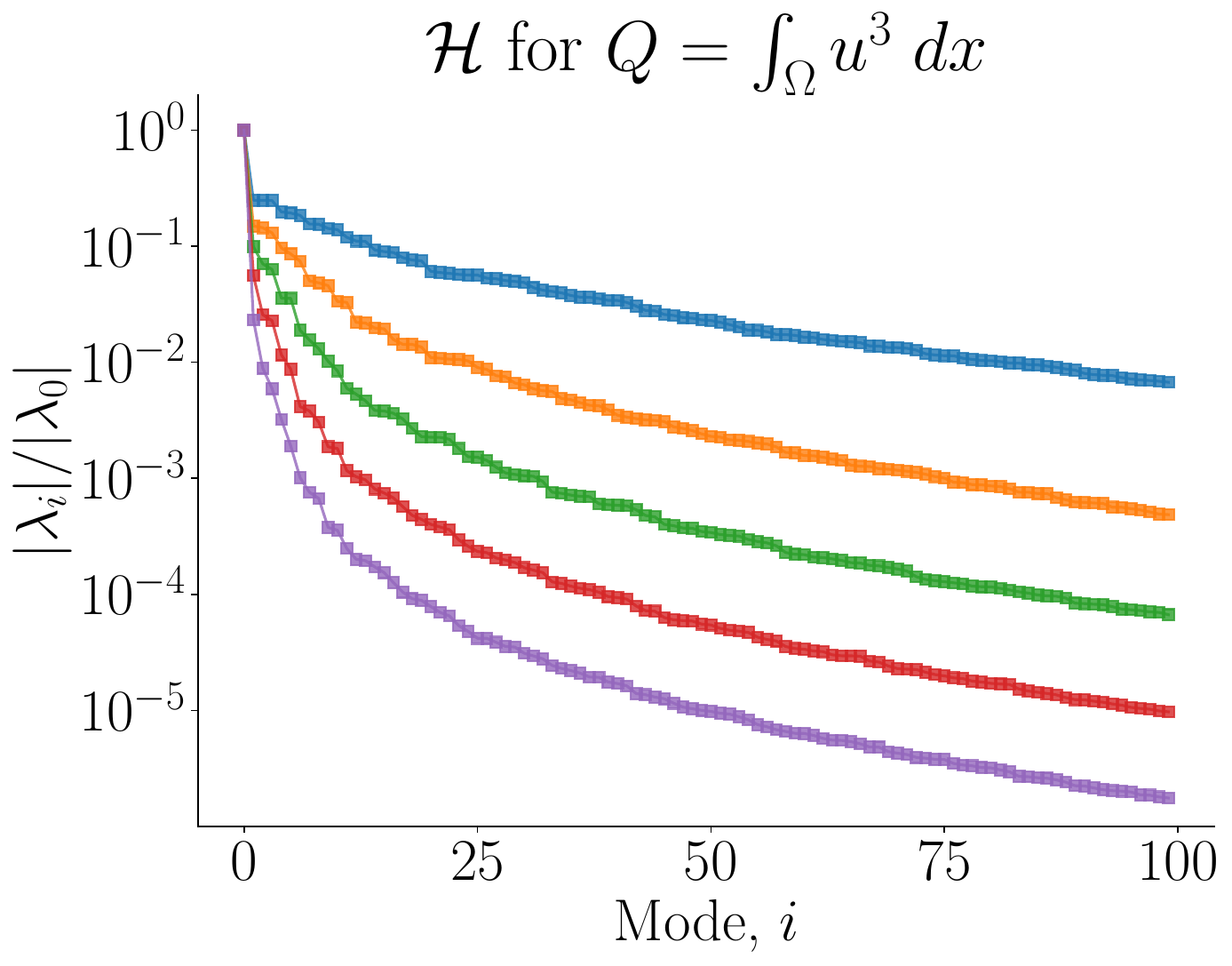}
	\end{subfigure}
	\begin{subfigure}{0.24\textwidth}
		\centering
		\includegraphics[width=\textwidth]{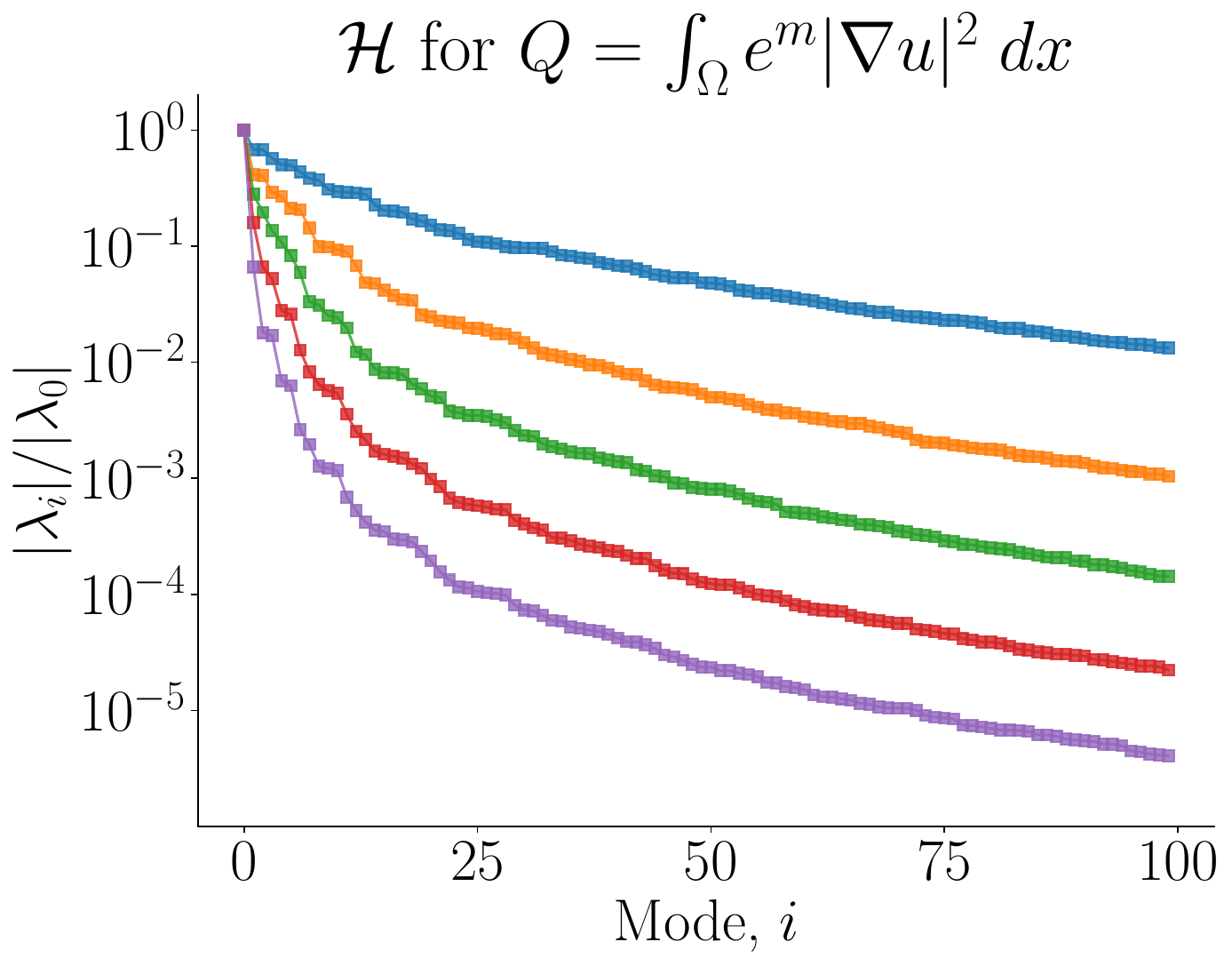}
	\end{subfigure}

	\caption{Magnitudes of the first 100 eigenvalues (as a ratio to the largest eigenvalue) for the covariance operator (left) and the covariance-preconditioned Hessian of the QoIs at $\bar{m}$ (left to right: $L^2$, $L^3$, and energy). Spectra are shown for correlation lengths of $0.1, 0.25, 0.5, 1, 2$ and a pointwise variance of $1$.}
	\label{fig:adr_spectra}
\end{figure}

\begin{figure}[h!]
	\centering
	\begin{subfigure}{0.2\textwidth}
		\centering
		\includegraphics[width=\textwidth]{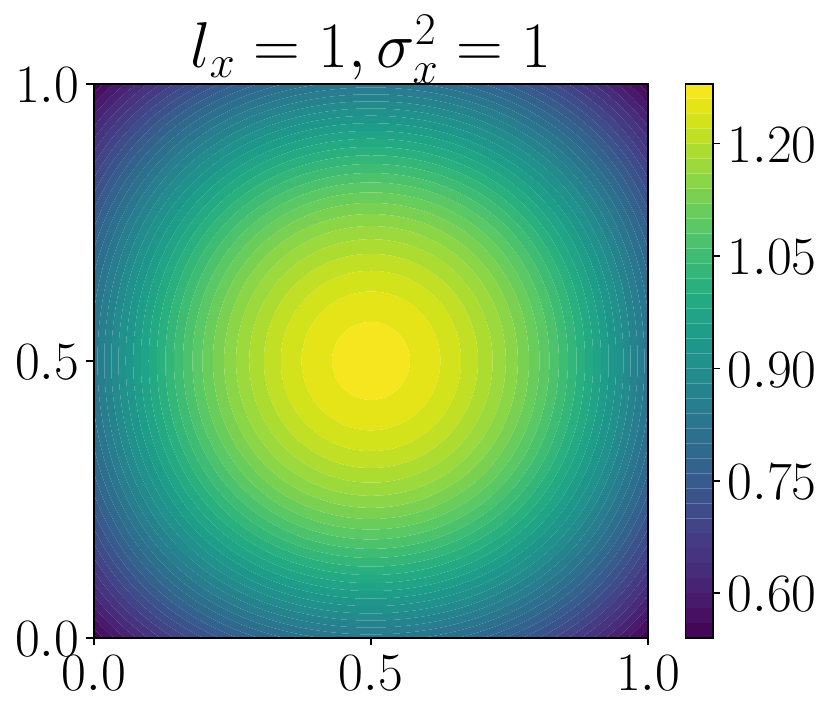}
	\end{subfigure}
	\begin{subfigure}{0.2\textwidth}
		\centering
		\includegraphics[width=\textwidth]{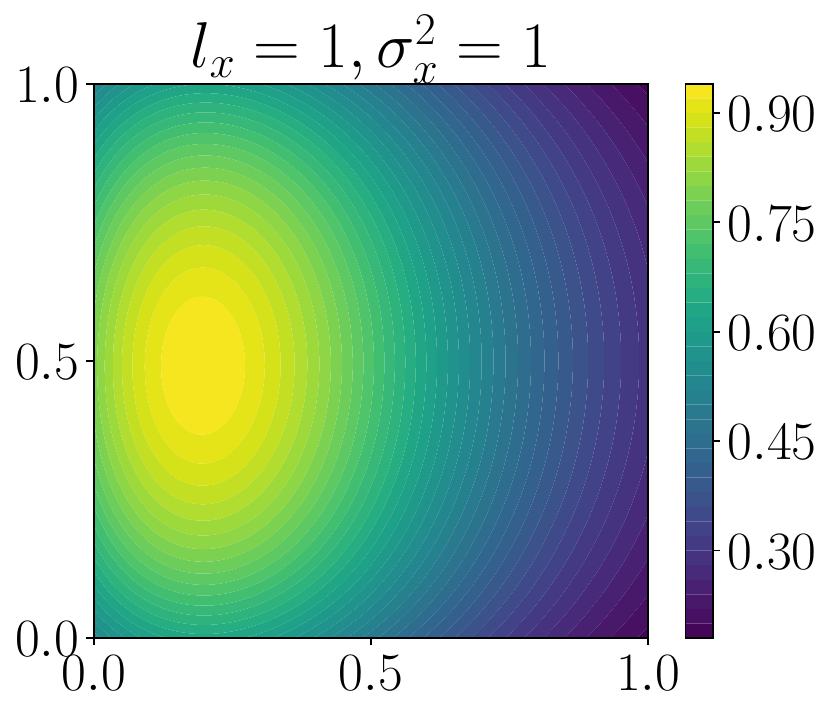}
	\end{subfigure}
	\caption{Dominant eigenvector of the covariance (left) and covariance-preconditioned Hessian at $\bar{m}$ for the $L^2$ QoI (right) with correlation length and pointwise variance both equal to one.}
	\label{fig:adr_eigenvectors}
\end{figure}

We then consider evaluating the mean, variance, and CVaR values using the Gaussian mixture Taylor approximation. 
The mixture approximations are constructed using up to 39 mixture components, generated using the rule $\sigma = N^{-1/2}$, along the dominant eigenvector of either the covariance or preconditioned Hessian operator. 
We then consider both linear and low-rank quadratic Taylor approximations at each of the mixture component means.
In particular, for the quadratic Taylor approximations, we use a uniform rank of $r_{\cH} = 200$ and draw a large number samples ($10^5$) for each component to make sampling errors negligible. 

The estimated values of the risk measures are compared against ``ground truth'' values computed by Monte Carlo (MC) sampling using $10^5$ samples. 
The mixture Taylor approximations and their relative errors are shown in Figure \ref{fig:adr_mean_sd_cvar} for the $L^2$ QoI using a covariance with $l_x = 1$ and $\sigma_x^2 = 1$, plotted as a function of the number of mixture components. 
Note that the points with $N_{\mix} = 1$ corresponds to the standard Taylor approximation at the mean, without the Gaussian mixture approximation.
We also compare against MC estimates using up to 1,000 samples, which are also shown in Figure \ref{fig:adr_mean_sd_cvar}, 
where we plot the band of the mean $\pm$ one standard deviation of the MC estimates over 200 trials, as well as their relative root-mean-square-errors (RMSEs).
The relative error and relative RMSEs are defined as 
\[
\text{Relative error} := \frac{|\widehat{\rho} - \rho_{\nu}[Q]|}{|\rho_{\nu}[Q]|}, \qquad 
\text{Relative RMSE} := \frac{\sqrt{\sum_{i=1}^{N_{\text{trial}}} (\widehat{\rho}_{M}^{(i)} - \rho_{\nu}[Q])^2/N_{\text{trial}}}}{|\rho_{\nu}[Q]|},
\]
where $\rho_{\nu}[Q]$ refers to the true value of the risk measure, 
$\widehat{\rho}$ is an approximation of the risk measure, 
and $\widehat{\rho}_{{M}}^{(i)}$, $i = 1, \dots N_{\text{trial}}$ are $N_{\text{trial}}$ trials of sampling estimates with $M$ samples each.

For the QoI and covariance operator shown, 
we recognize that the variance of the QoI is sufficiently large
such that MC estimates with $10^3$ samples still show RMSEs of approximately 10\%.
In this case, neither the linear or quadratic Taylor approximations are accurate, yielding errors between 10\%--100\%.
However, adding mixture components improves accuracy for all cases, producing 1--2 orders of magnitude of improvement with $N_{\mix} = 39$.  
In particular, with ${N_{\mix} = 39}$, the quadratic approximations errors tend to be $<1\%$, 
which are 1--2 orders of magnitude smaller than MC estimates with $10^3$ samples,
while the linear approximations using the HEP-based mixture decomposition are comparable in accuracy MC estimates with $10^3$ samples. 
However, we also note that the approximations tend to stagnate in accuracy. That is, increasing $N_{\mix}$ further yields diminishing returns due the truncation errors in directions other than the expansion direction. 
In this problem, the state equation is typically takes $n_L = 3$ Newton iterations to solve, and so the computational cost associated with each mixture component is approximately $n_L + 2 = 5$ unique linear PDE solves when using the quadratic Taylor approximation,
i.e.\ approximately $1.7\times$ that of each MC sample.
Taking this into account, for the PDE and QoIs considered, the Gaussian mixture with quadratic Taylor approximations can, nevertheless, be over 1--2 orders more efficient compared to standard MC estimation.

\begin{figure}[htbp!]
	\centering
	\begin{subfigure}{0.32\textwidth}
	\centering
	\includegraphics[width=\textwidth]{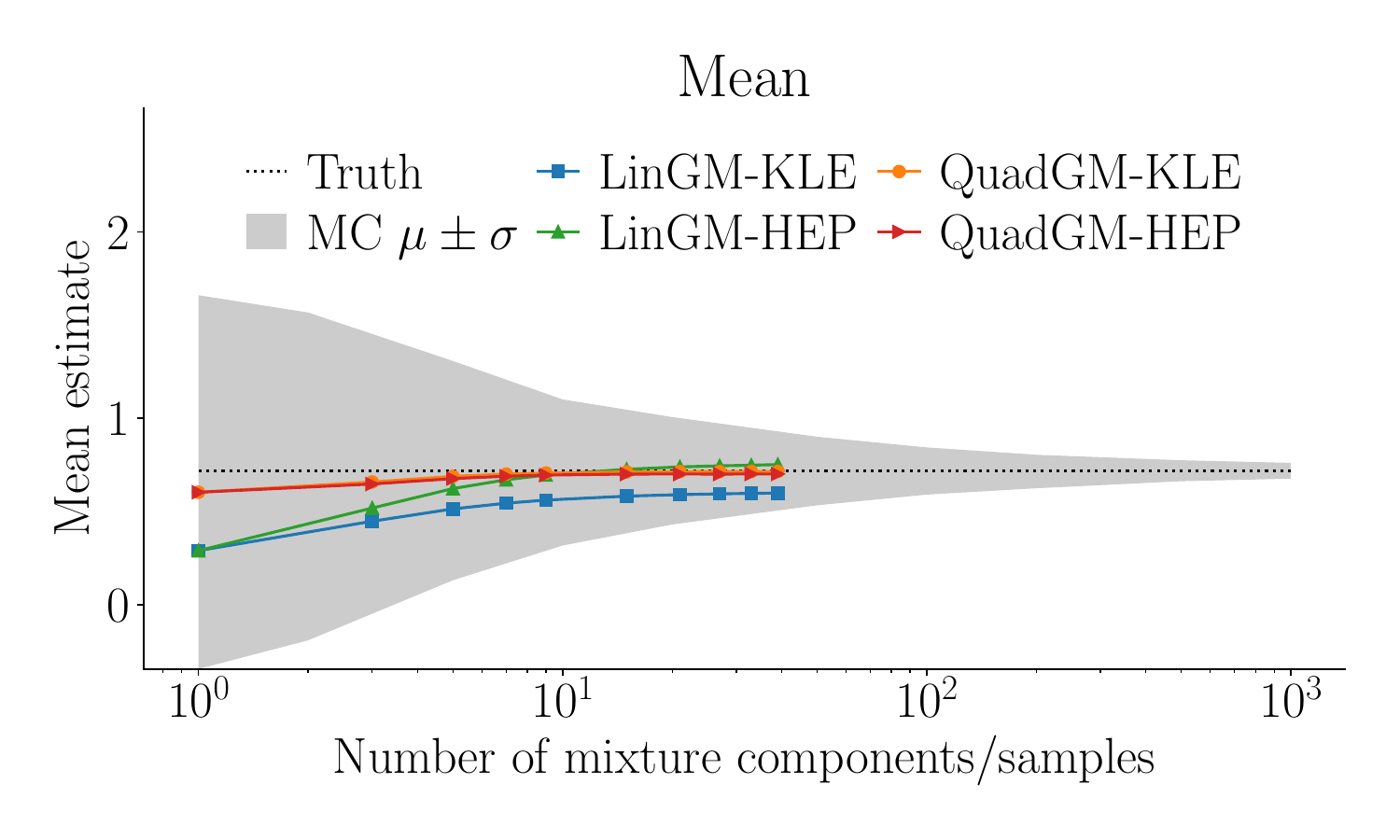}
	\end{subfigure}
	\begin{subfigure}{0.32\textwidth}
	\centering
	\includegraphics[width=\textwidth]{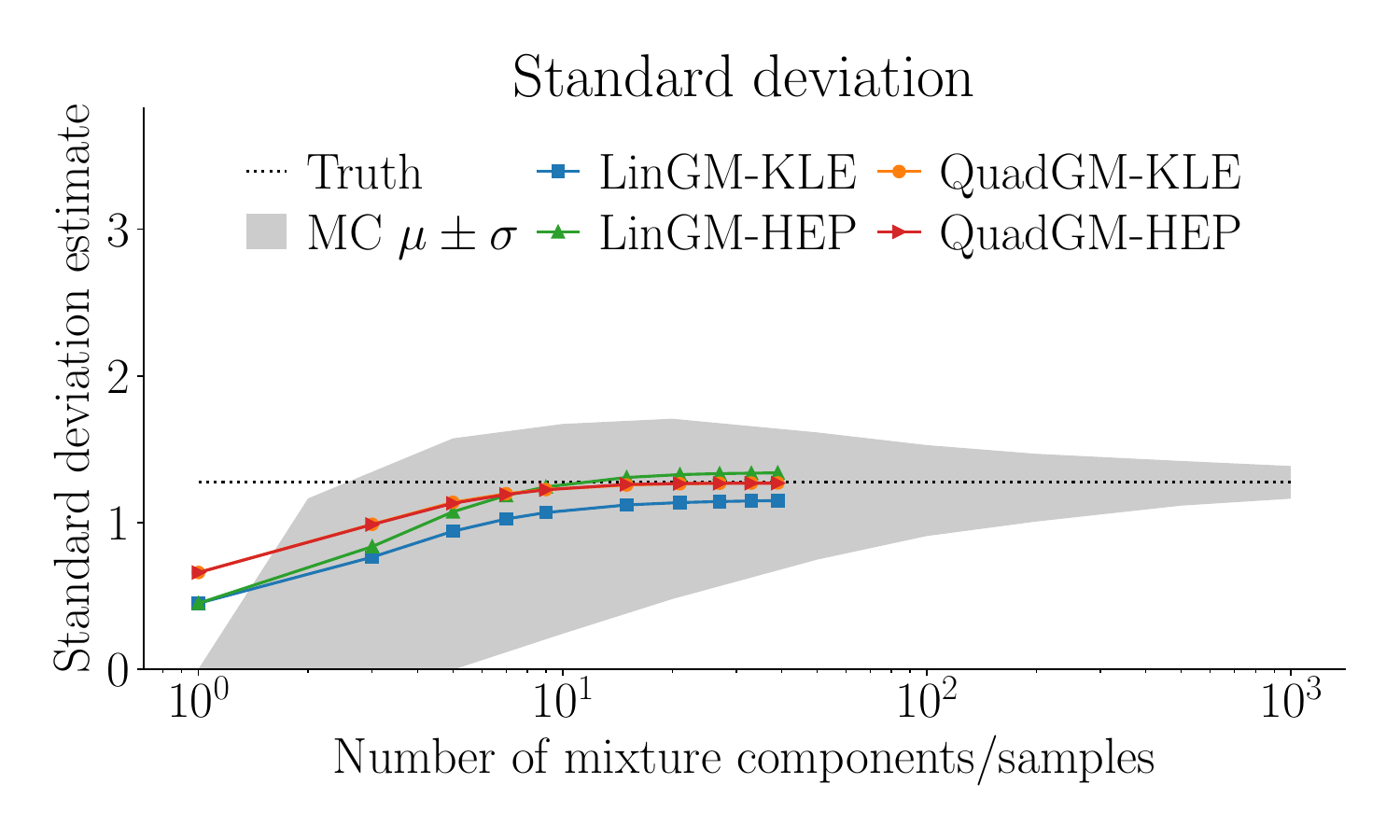}
	\end{subfigure}
	\begin{subfigure}{0.32\textwidth}
	\centering
	\includegraphics[width=\textwidth]{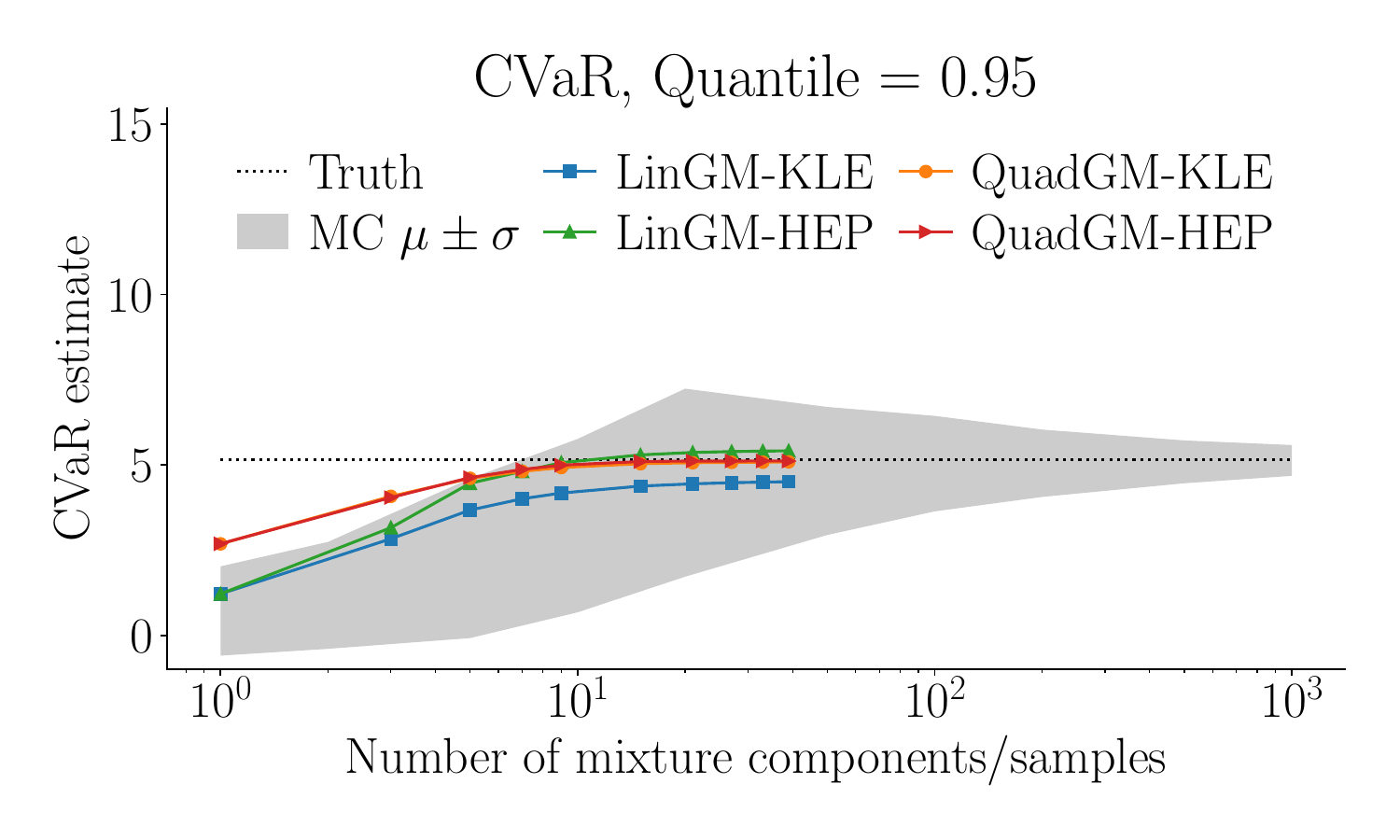}
	\end{subfigure}

	\begin{subfigure}{0.32\textwidth}
	\centering
	\includegraphics[width=\textwidth]{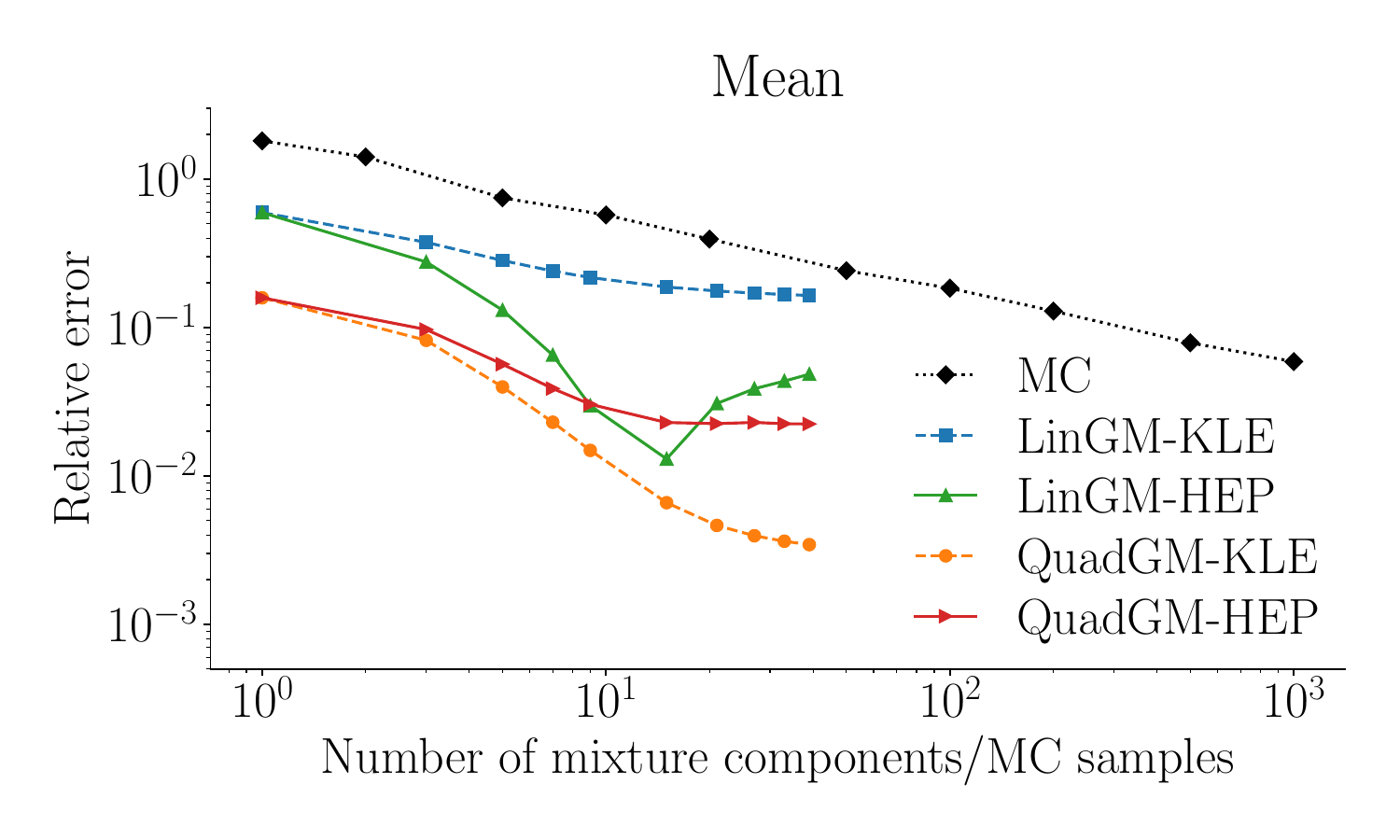}
	\end{subfigure}
	\begin{subfigure}{0.32\textwidth}
	\centering
	\includegraphics[width=\textwidth]{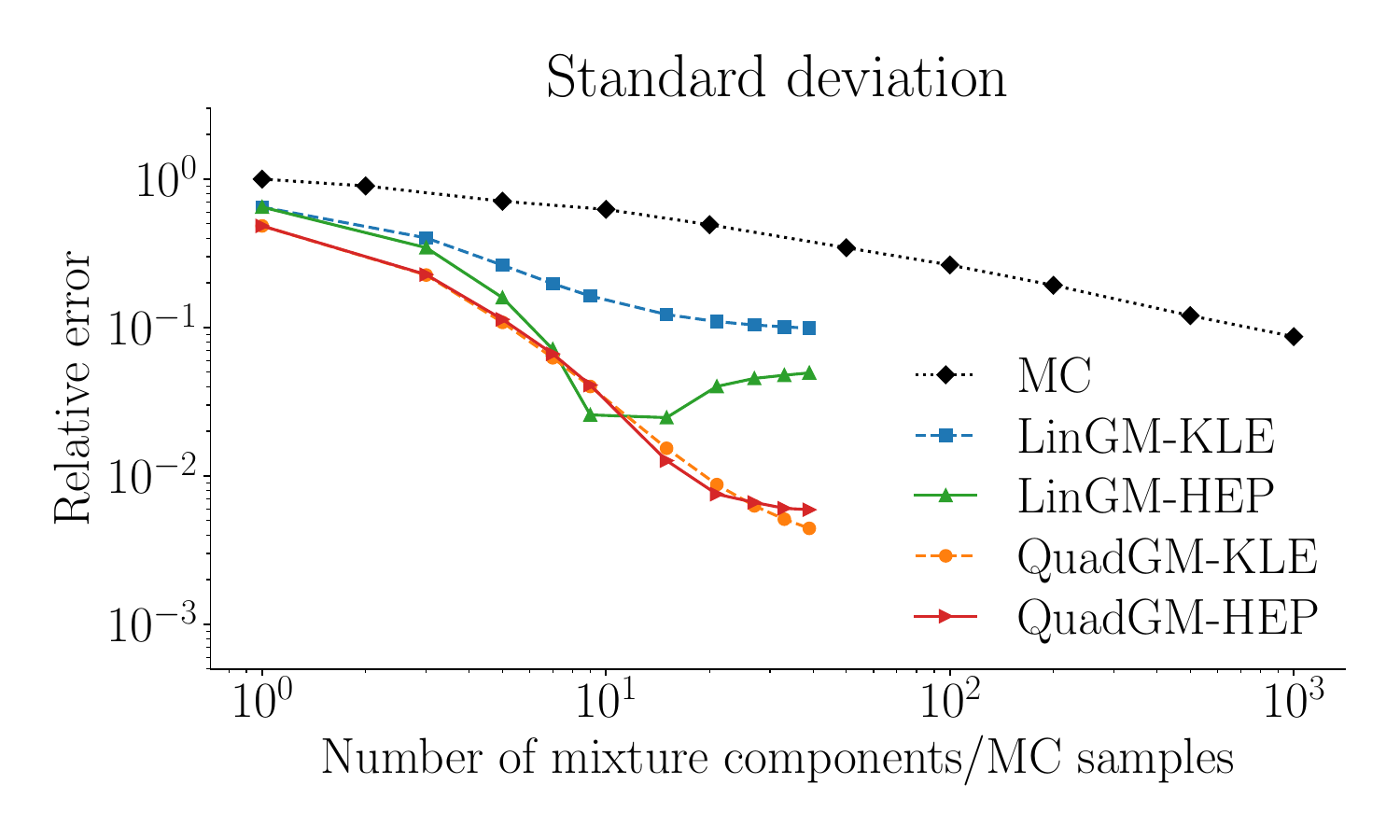}
	\end{subfigure}
	\begin{subfigure}{0.32\textwidth}
	\centering
	\includegraphics[width=\textwidth]{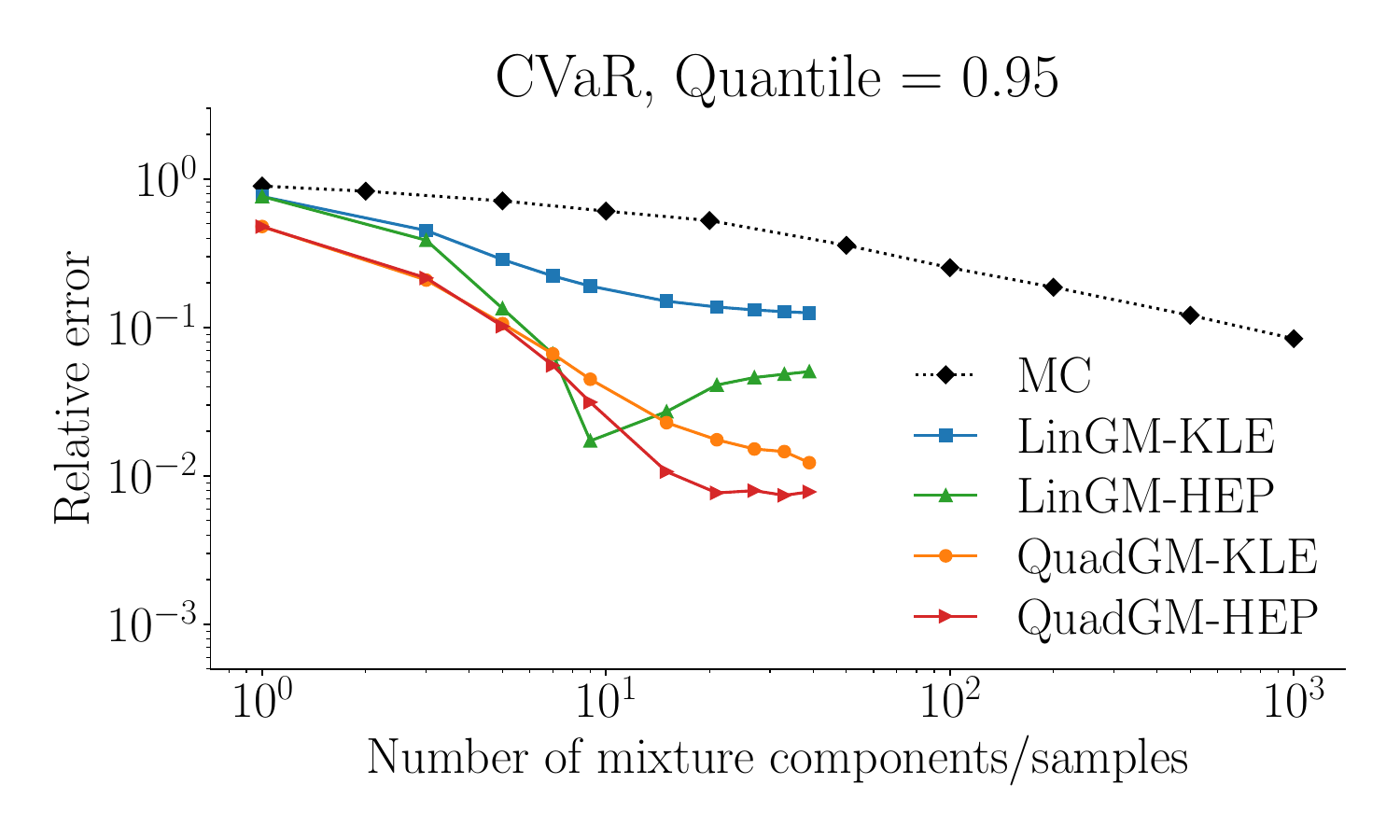}
	\end{subfigure}

	\caption{Mean, standard deviation, and CVaR ($\alpha=0.95$) estimates for the $L^2$ QoI using linear and quadratic 
		Gaussian mixture Taylor approximations with up to 
		$N_{\mix} = 39$ components along the dominant eigendirection. Top row shows the estimates and bottom row shows the relative errors.
		Results are for correlation length and pointwise variance both equal to one.}
	\label{fig:adr_mean_sd_cvar}
\end{figure}

Focusing on the CVaR, we compute estimates for quantile values ranging from $\alpha = 0.9$ to $\alpha = 0.999$, 
representing increasing levels of importance of the upper tails of the distribution. 
Again, considering a covariance with $l_x = 1$ and $\sigma_x = 1$, and all three QoIs,
we show the relative errors of the estimates using $N_{\mix} = 39$ in Figure \ref{fig:adr_quantile_sweep}, 
plotted as a function of the tail size $1-\alpha$.
For comparison, we also plot the relative RMSE of the MC estimates using $10^3$ and $10^4$ samples.

Here we observe that the mixture approximations using the HEP, 
which is informed by both the parameter distribution and the QoI mapping, 
tend to perform better than those obtained from the covariance alone. 
With the HEP mixture, the linear approximation errors are smaller than the RMSE from MC estimates with $10^3$ samples, 
while the quadratic approximation errors are again $<1\%$ and smaller than those from $10^4$ MC samples. 
Additionally, the relative errors appear to be fairly consistent over the range of $\alpha$ considered.

\begin{figure}[htbp!]
	\centering
	\begin{subfigure}{0.32\textwidth}
	\centering
	\includegraphics[width=\textwidth]{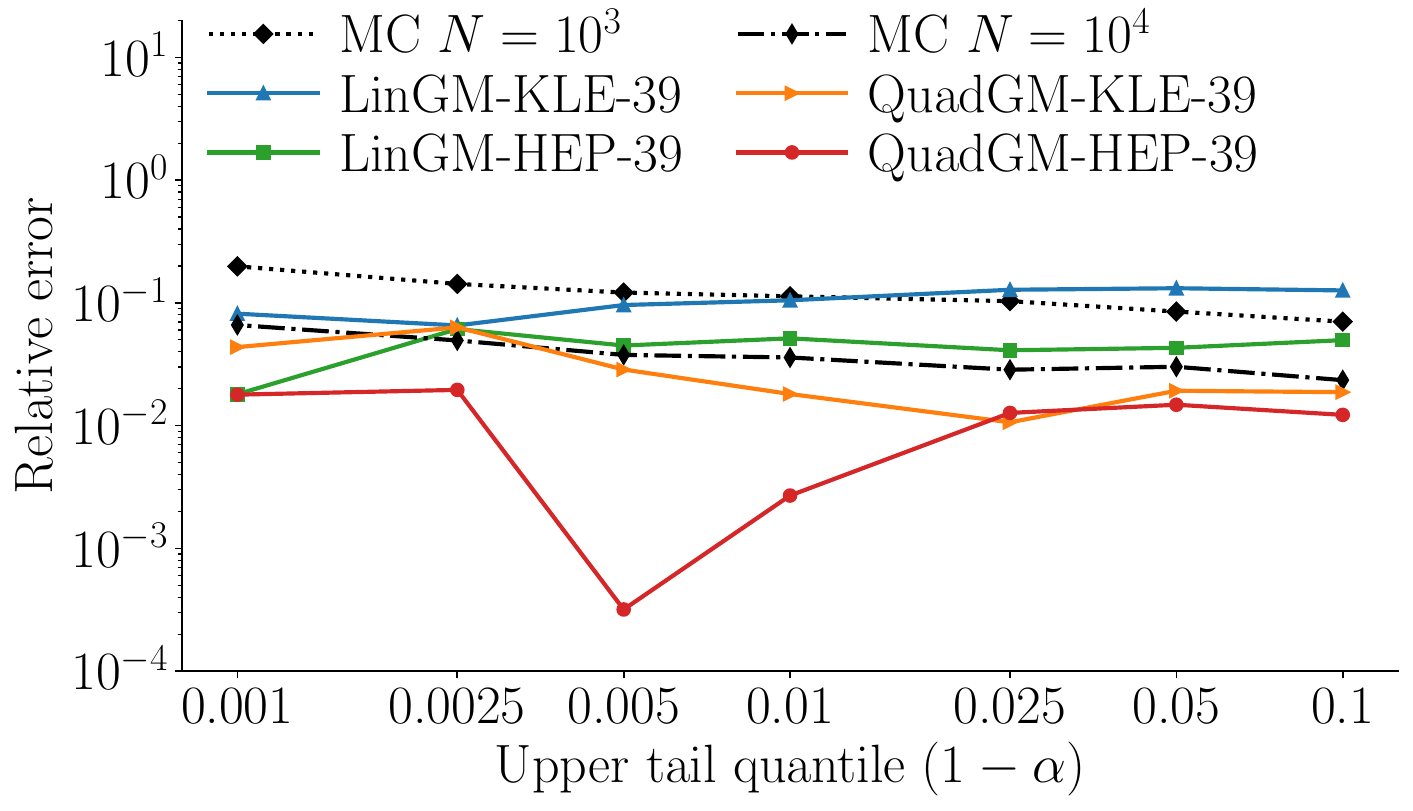}
	\caption*{$L^2$}
	\end{subfigure}
	\begin{subfigure}{0.32\textwidth}
	\centering
	\includegraphics[width=\textwidth]{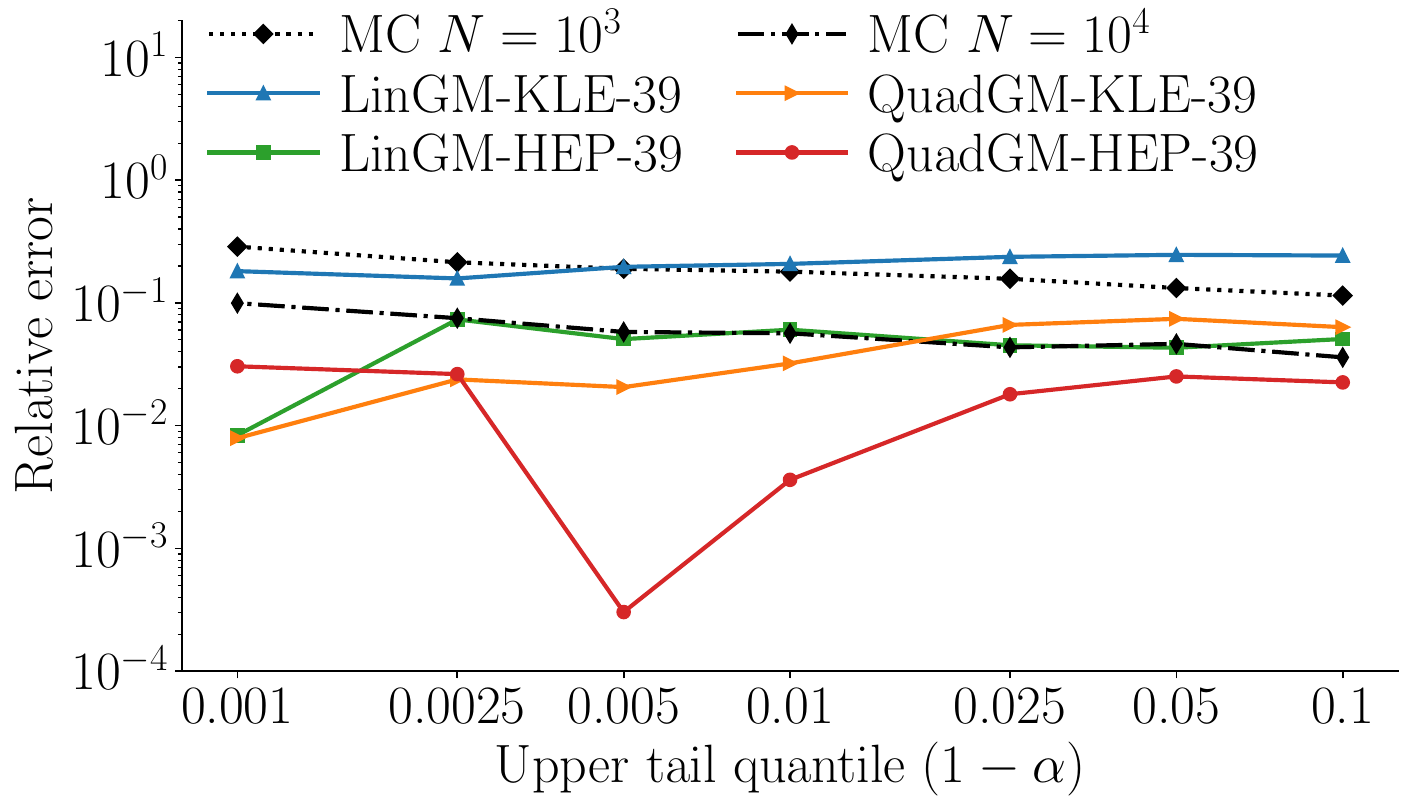}
	\caption*{$L^3$}
	\end{subfigure}
	\begin{subfigure}{0.32\textwidth}
	\centering
	\includegraphics[width=\textwidth]{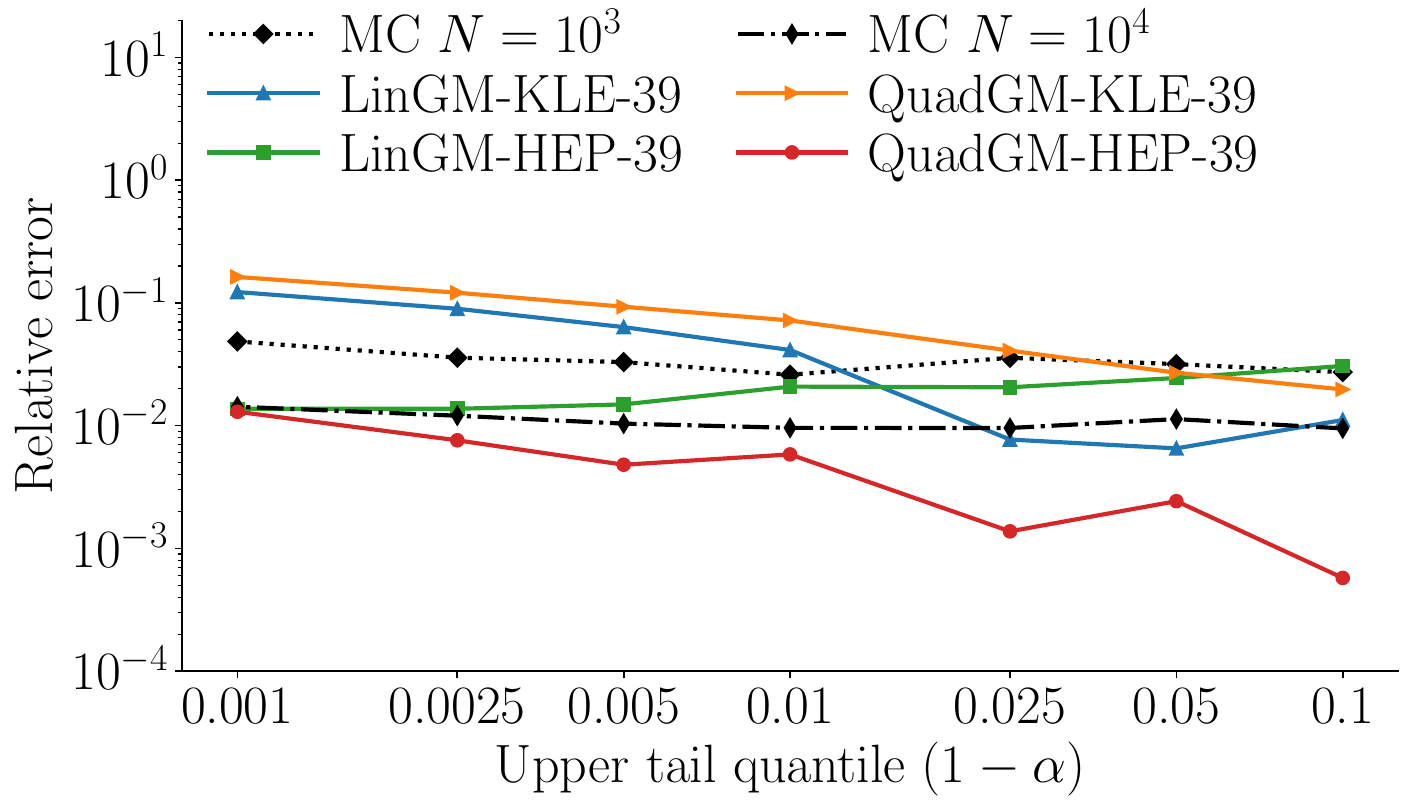}
	\caption*{Energy}
	\end{subfigure}
	\vspace{0pt}
	\caption{Relative errors for estimates of CVaR using 39 mixture components along the dominant KLE or HEP directions as a function of $1-\alpha$, for quantiles $\alpha$ from $0.95$ to $0.999$. These are compared against MC estimates using $10^3$ and $10^4$ samples. Results are shown for the $L^2$ (left), $L^3$ (center), and energy (right) QoIs, with correlation length and pointwise variance both equal to 1.}
	\label{fig:adr_quantile_sweep}
\end{figure}

We also demonstrate the effects of changing the correlation length $l_x$ and pointwise variance $\sigma_x^2$ of the distribution. 
First, for a fixed pointwise variance of $\sigma_x = 1$, we consider correlation lengths from $l_x = 0.1$ to $l_x = 2$, and compute risk measure estimates for the three QoI. 
In Figure~\ref{fig:adr_corr_sweep}, we present the results for CVaR with $\alpha = 0.95$, plotting the relative errors of the mixture Taylor approximations with $N_{\mix} = 39$, along with the relative RMSEs of MC estimates using $10^3$ and $10^4$ samples for comparison. 
It is again evident that the HEP based mixture Taylor approximations consistently perform better than their KLE counterparts, 
with the quadratic approximations yielding $<1\%$ errors, lower than those obtained using $10^4$ MC samples. 

We also consider changing the pointwise variances from $\sigma_x^2 = 0.1$ to $\sigma_x^2 = 2$ while fixing the correlation length to be $l_x = 1$. 
The relative errors are presented in Figure \ref{fig:adr_var_sweep}. For simplicity, we shows only the HEP-based mixture Taylor approximations with $N_{\mix} = 39$. Instead, we plot the errors of standard linear and quadratic Taylor approximations without the Gaussian mixture. 
The figure illustrates the significant improvement in accuracy by using the mixture approximation compared to using only a single Taylor approximation.
We remark that there is a small increase in error with increasing pointwise variance, particularly for the energy QoI, alluding to a possible limitation for applications to QoIs with much larger variance.

\begin{figure}[htbp!]
	\centering
	\begin{subfigure}{0.32\textwidth}
	\centering
	\includegraphics[width=\textwidth]{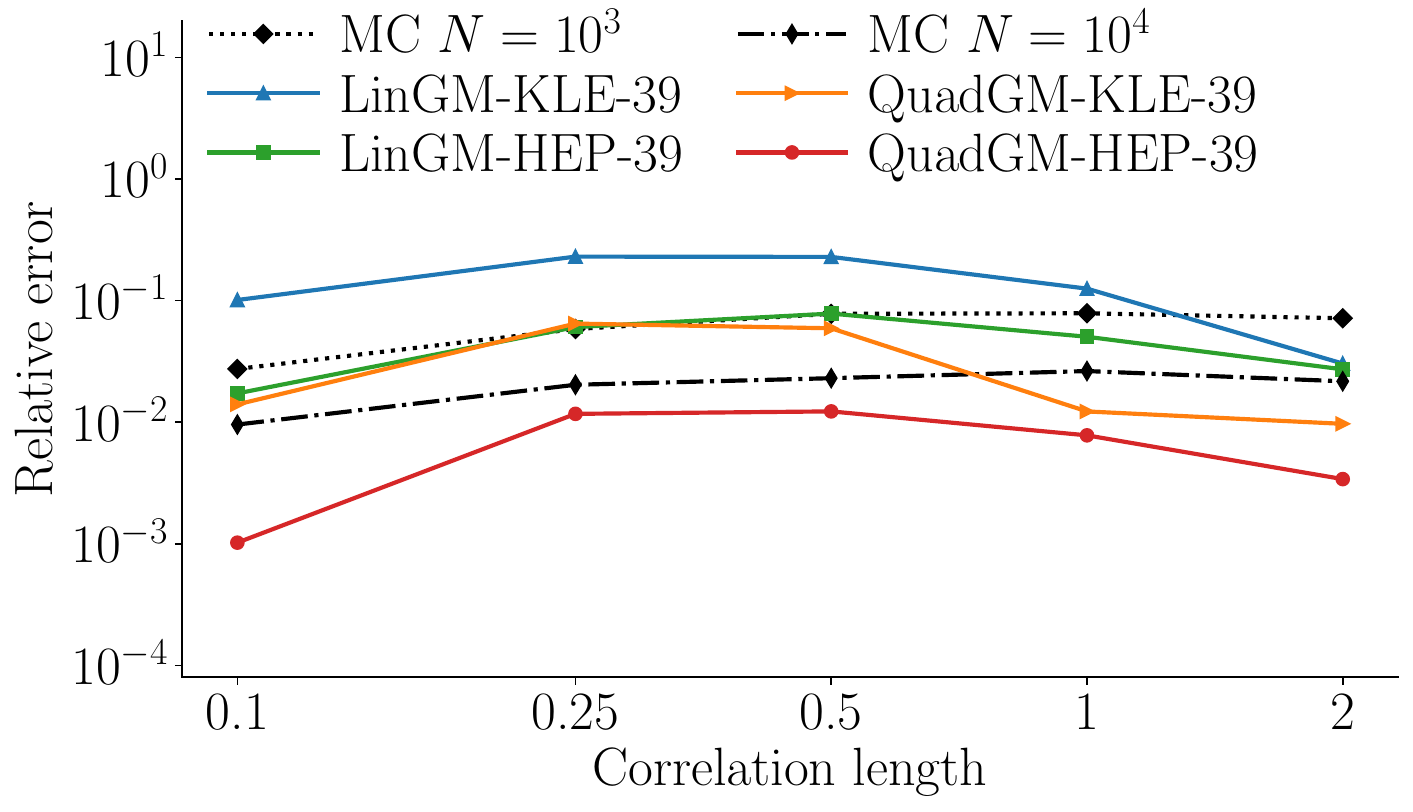}
	\caption*{$L^2$}
	\end{subfigure}
	\begin{subfigure}{0.32\textwidth}
	\centering
	\includegraphics[width=\textwidth]{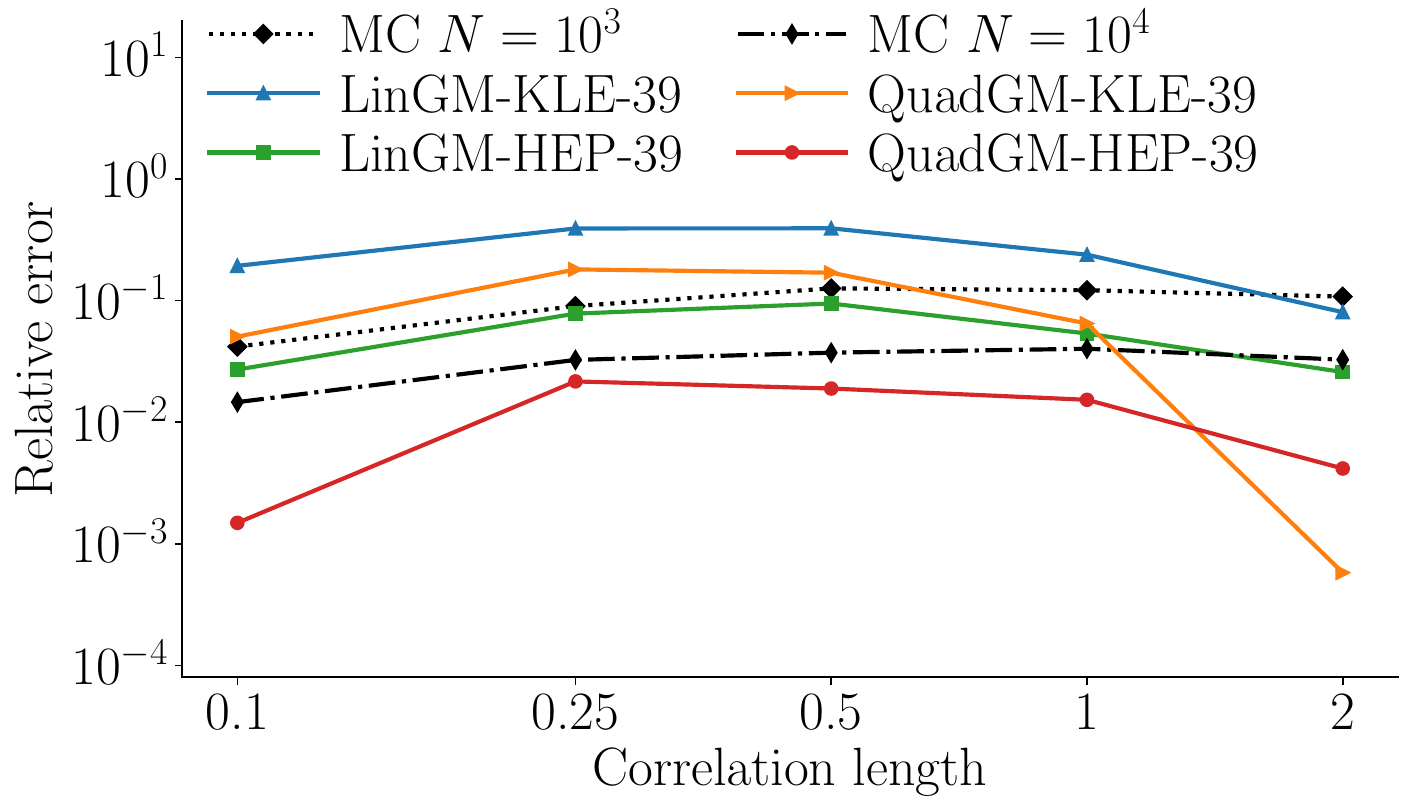}
	\caption*{$L^3$}
	\end{subfigure}
	\begin{subfigure}{0.32\textwidth}
	\centering
	\includegraphics[width=\textwidth]{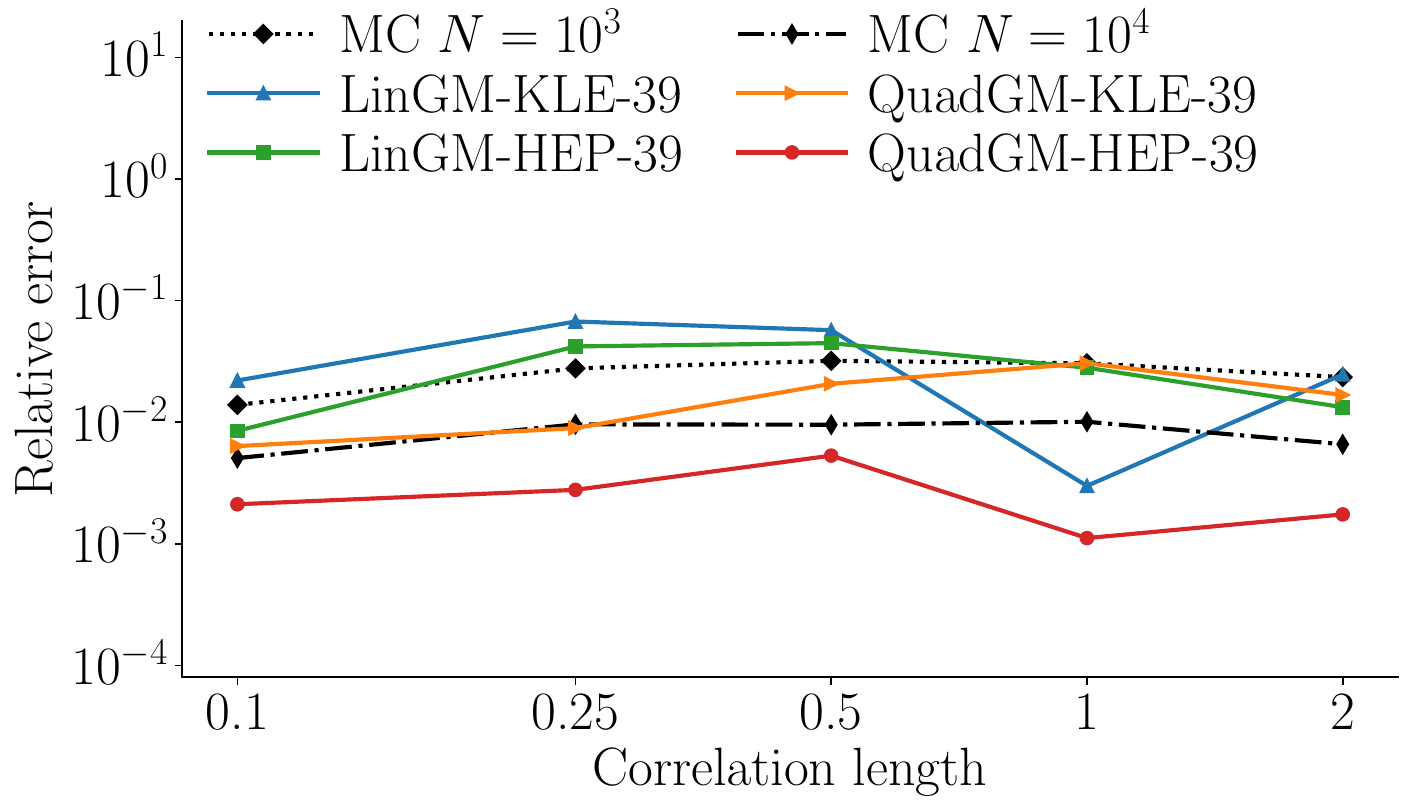}
	\caption*{Energy}
	\end{subfigure}
	\vspace{0pt}
	\caption{Relative errors for estimates of CVaR ($\alpha = 0.95$) using 39 mixture components along the dominant KLE or HEP directions for correlation lengths from $0.1$ to $2$. These are compared against MC estimates using $10^3$ and $10^4$ samples. Results are shown for the $L^2$ (left), $L^3$ (center), and energy (right) QoIs.}
	\label{fig:adr_corr_sweep}
\end{figure}

\begin{figure}[htbp!]
	\centering
	\begin{subfigure}{0.32\textwidth}
	\centering
	\includegraphics[width=\textwidth]{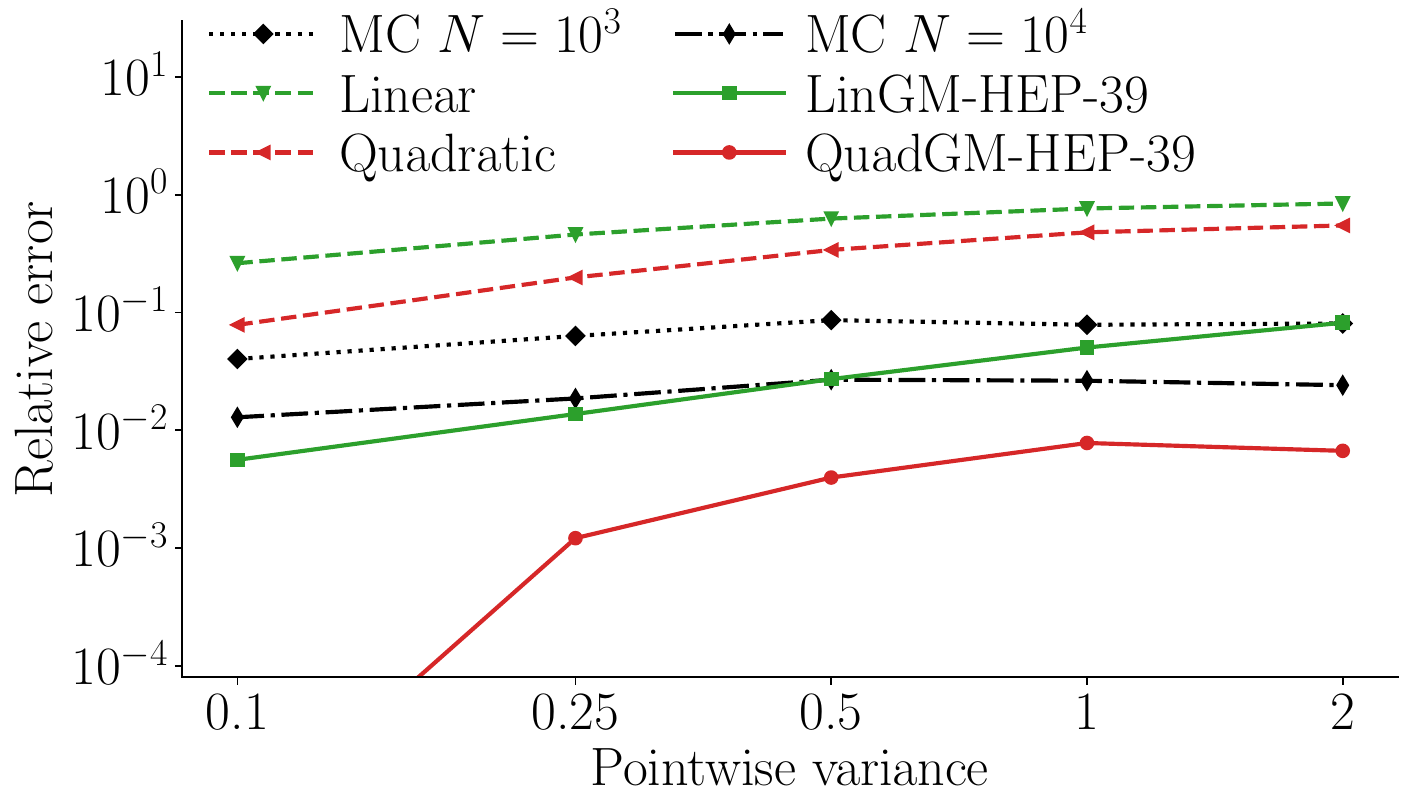}
	\caption*{$L^2$}
	\end{subfigure}
	\begin{subfigure}{0.32\textwidth}
	\centering
	\includegraphics[width=\textwidth]{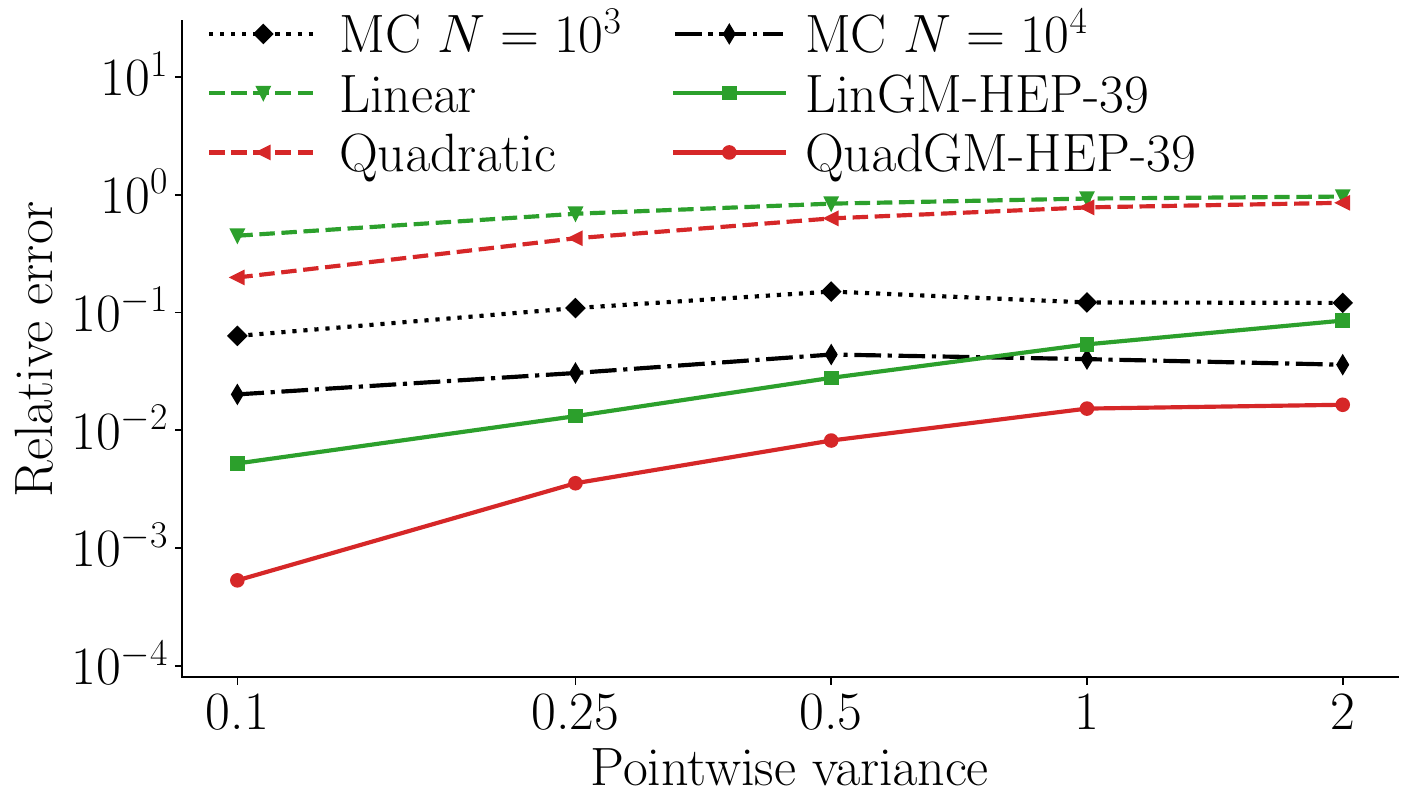}
	\caption*{$L^3$}
	\end{subfigure}
	\begin{subfigure}{0.32\textwidth}
	\centering
	\includegraphics[width=\textwidth]{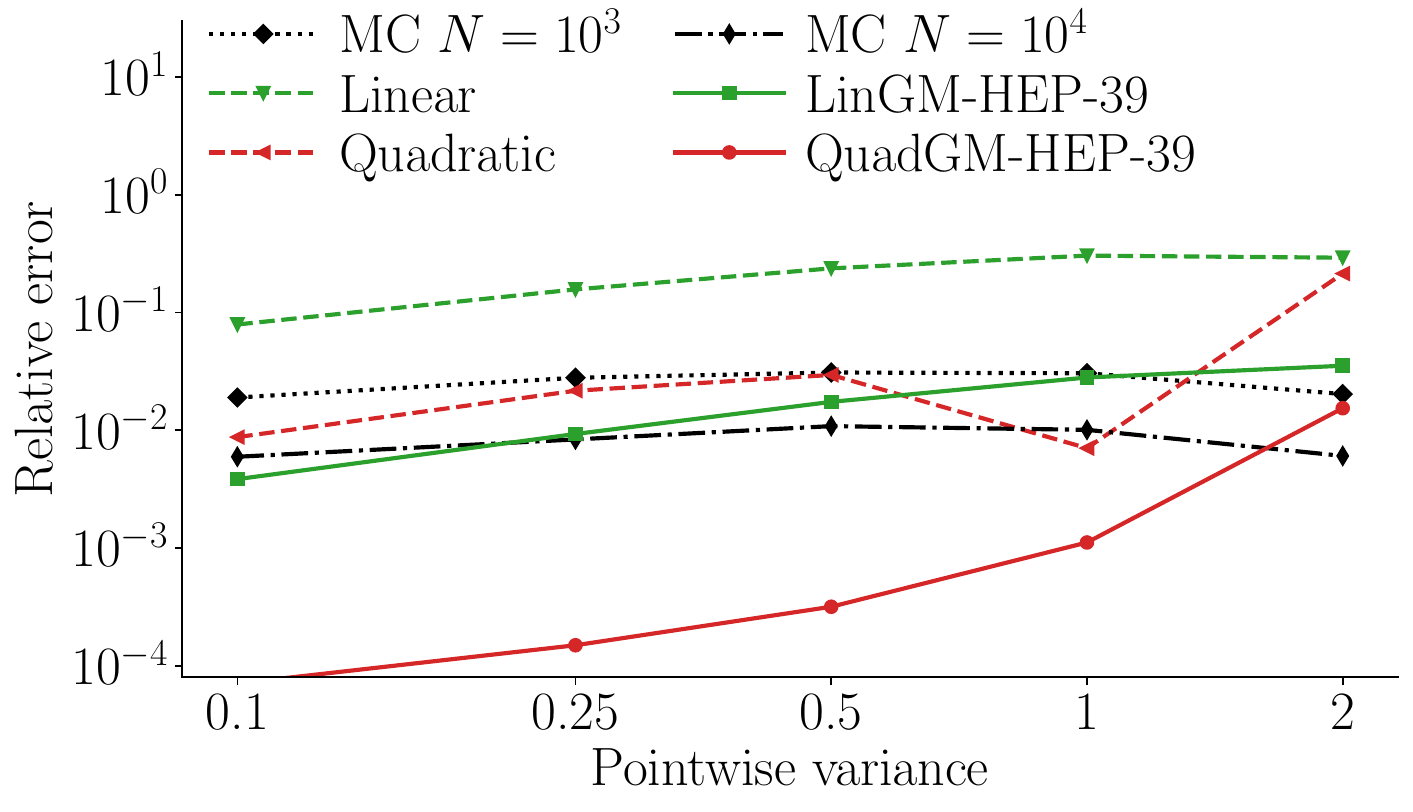}
	\caption*{Energy}
	\end{subfigure}
	\vspace{0pt}
	\caption{Relative errors for estimates of CVaR ($\alpha = 0.95$) using 39 mixture components along the dominant HEP directions for pointwise variances from $0.1$ to $2$. These are compared against MC estimates using $10^3$ and $10^4$ samples. Results are shown for the $L^2$ (left), $L^3$ (center), and energy (right) QoIs.}
	\label{fig:adr_var_sweep}
\end{figure}

\subsection{Helmholtz equation with uncertain wavespeeds}
We next consider numerical experiments for the scattering of acoustic waves around a circular obstacle with boundary $\partial \cD_{\text{obstacle}}$ surrounded by a heterogeneous medium with uncertain material properties.
The problem is governed by the Helmholtz equation,
\begin{align}\label{eq:helmholtz}
	- \Delta u + k^2 u - (k^2 - k_0^2) u_{\mathrm{inc}} &= 0 \qquad \text{in } \cD, \\
	\nabla u \cdot \bs{n} - \nabla u_{\mathrm{inc}} \cdot \bs{n}  &= 0 \qquad \text{on } \partial \cD_{\text{obstacle}}, \\
	\lim_{r \rightarrow \infty} r^{(d-1)/2}\left(\frac{\partial u}{\partial r} - iku\right) &= 0. \label{eq:helmholtz_radiation_condition}
\end{align}
Here, $i$ denotes the imaginary unit, $r = \sqrt{x_1^2 + x_2^2}$, and $u^{\text{inc}} = \exp(i k_0 x \cdot \bs{e}_1)$ is the incident wave with wavenumber $k_0 = 2\pi$ that is propagating in the $\bs{e}_1 = (1,0)$ direction. The wavenumber of the medium is given by 
\begin{equation}
	k(x) = \bar{k}(x)\exp(\mathbbm{1}_{A_R}(x) m(x)),
\end{equation}
where $A_R$ is the annulus with outer radius $R_O = 2$ and inner radius $R_I = 1$, $\mathbbm{1}_{A_R}$ is the indicator function for $A_R$, and $\bar{k}$ is the nominal wavenumber, given by
\begin{equation}
	\bar{k}(x) = \begin{cases}
		k_0 (1 - (r(x) - R_o)^2/8) & \text{if } x \in A_R, \\
		k_0 & \text{if } x \in \cD \setminus A_R.
	\end{cases}
\end{equation}
This represents a medium that changes within the annulus, which is further perturbed by a multiplicative random field within the annulus. 
The condition \eqref{eq:helmholtz_radiation_condition} is the Sommerfield radiation condition, which ensures an outgoing scattered wave. 
We discretize the PDE over a square computational domain with a circular obstacle of radius $r = 1$ centered at the origin, $\cD = [-6, 6]^2 \setminus B_1(0)$, and make use of a perfectly matched layer (PML) of unit length to enforce the radiation condition. 
We consider the QoI to be the total intensity of the scattered wave,
\begin{equation}
	Q = \int_{\cD}|u|^2 \, dx = \int_{\cD} u_r^2 + u_i^2 \dx,
\end{equation}
noting that $u$ is complex valued with real and imaginary components $u_r$ and $u_i$. 

In our examples, we consider $(\gamma, \delta) = (4,8)$, $(8,8)$, and $(8,4)$ for the covariance $\cC = (\delta -\gamma \Delta)^{-2}$.
Figure \ref{fig:helmholtz_example} shows the mean wavenumber $\bar{k}$, a sample of the random parameter field $m$ for $(\gamma, \delta) = (4, 8)$, and the corresponding real and complex parts of the scattered wave. 
We also plot the spectra from the KLE and HEP in Figure \ref{fig:helmholtz_eigenvectors}.
Again we note the visual differences between the dominant eigenmodes of the covariance and Hessian operators, where the covariance eigenvector is radially uniform, while the Hessian eigenvector additionally counts for the dependence of the QoI on the parameter.

\begin{figure}[h!]
	\centering
	\begin{subfigure}{0.24\textwidth}
		\centering
		\includegraphics[width=\textwidth]{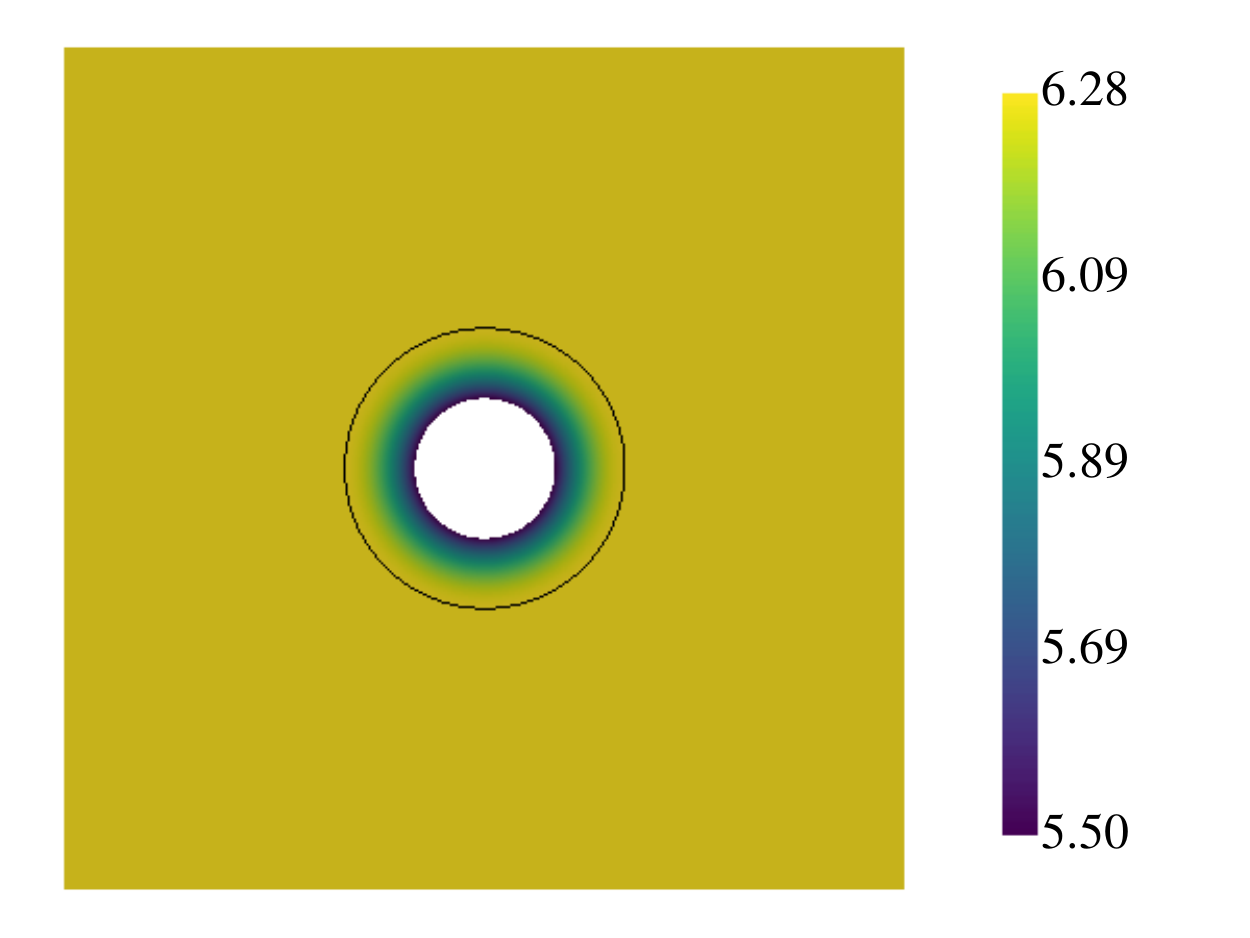}
		\caption{$\bar{k}$}
	\end{subfigure}
	\begin{subfigure}{0.24\textwidth}
		\centering
		\includegraphics[width=\textwidth]{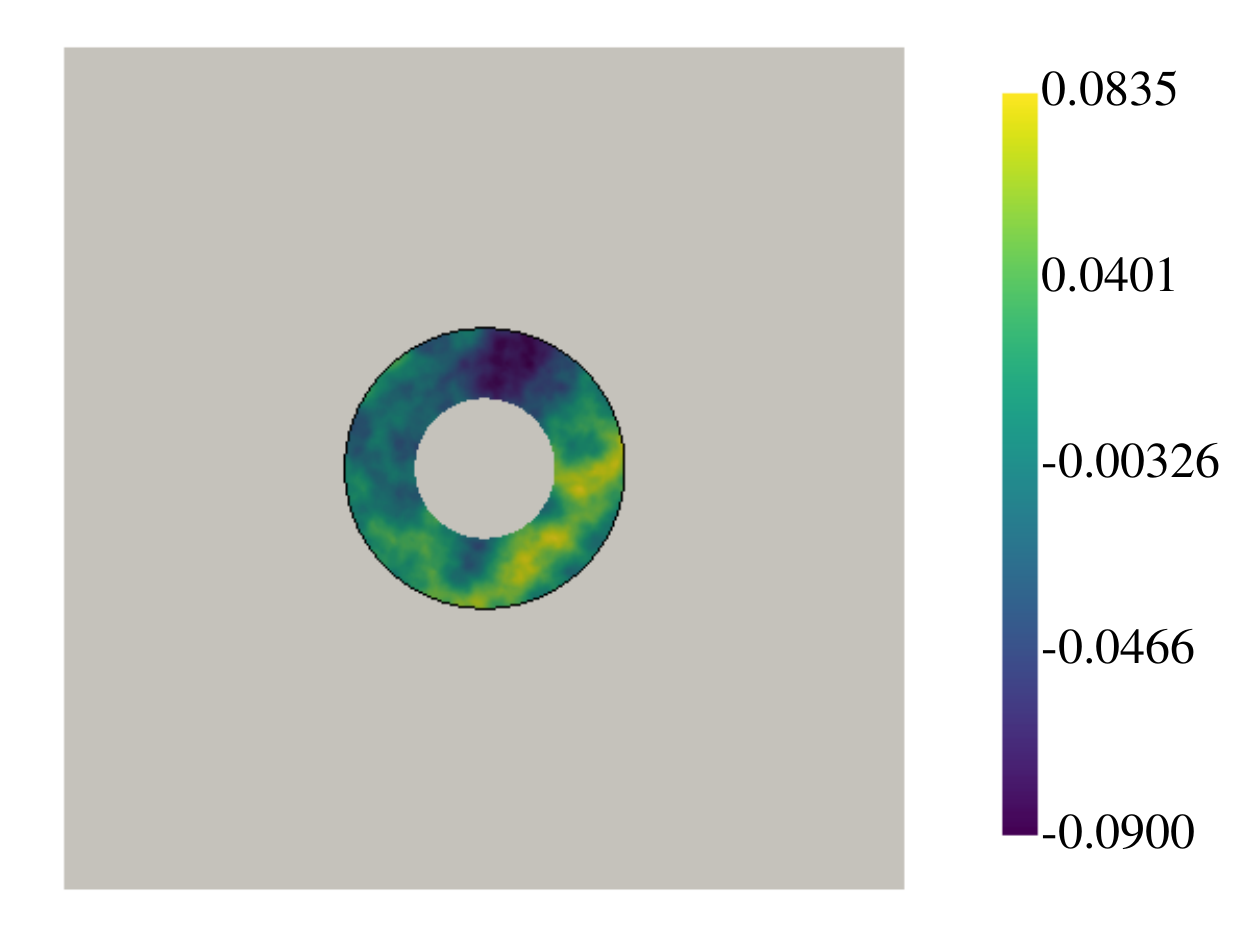}
		\caption{$m$ sample}
	\end{subfigure}
	\begin{subfigure}{0.24\textwidth}
		\centering
		\includegraphics[width=\textwidth]{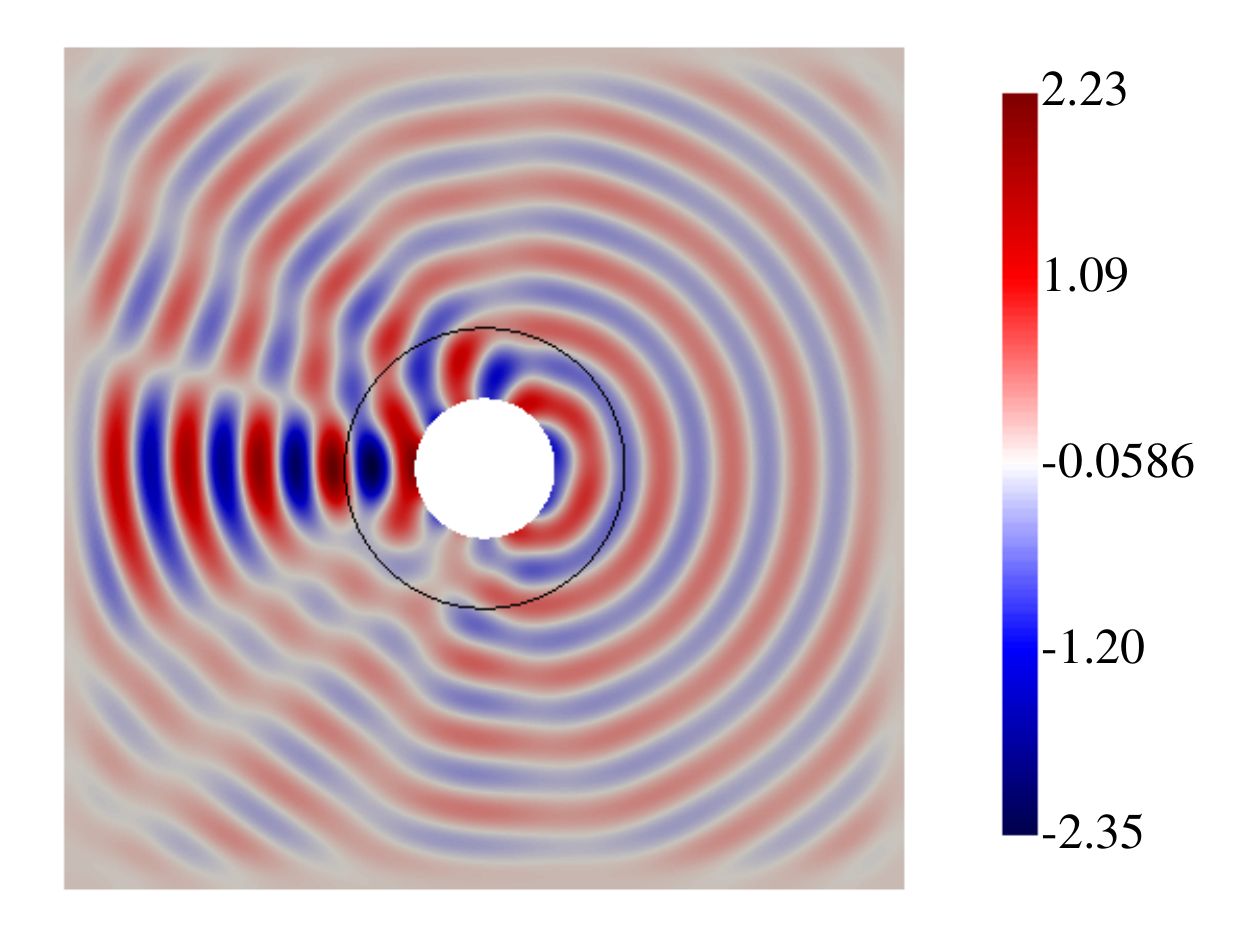}
		\caption{$u_r$ sample}
	\end{subfigure}
	\begin{subfigure}{0.24\textwidth}
		\centering
		\includegraphics[width=\textwidth]{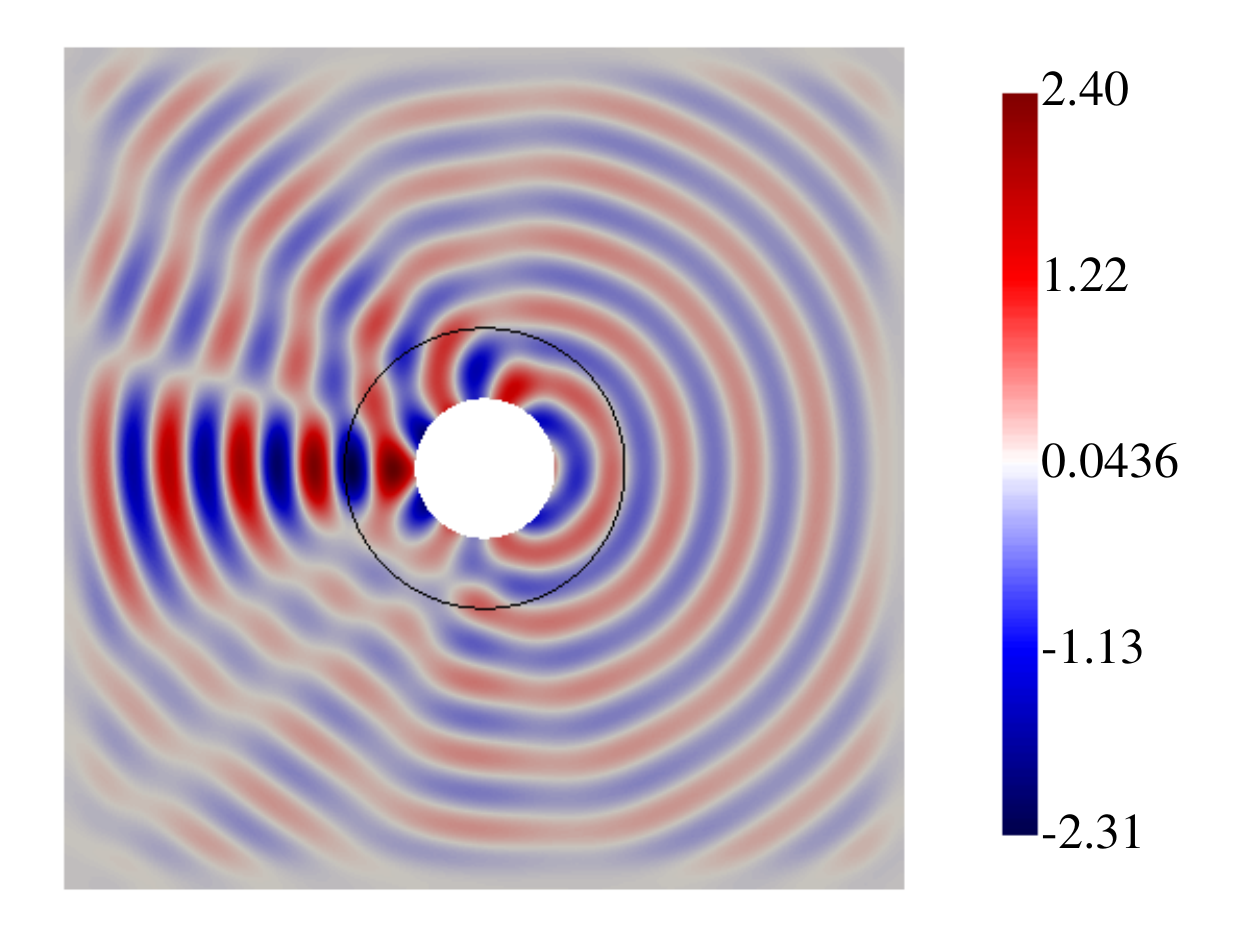}
		\caption{$u_i$ sample}
	\end{subfigure}
	\vspace{0pt}
	\caption{The mean wavenumber $\bar{k}$, a sample of the random parameter field $m$, and the corresponding real and complex parts of the solution at the given sample.}
	\label{fig:helmholtz_example}
\end{figure}

\begin{figure}[h!]
	\centering
	\begin{subfigure}{0.24\textwidth}
		\centering
		\includegraphics[width=\textwidth]{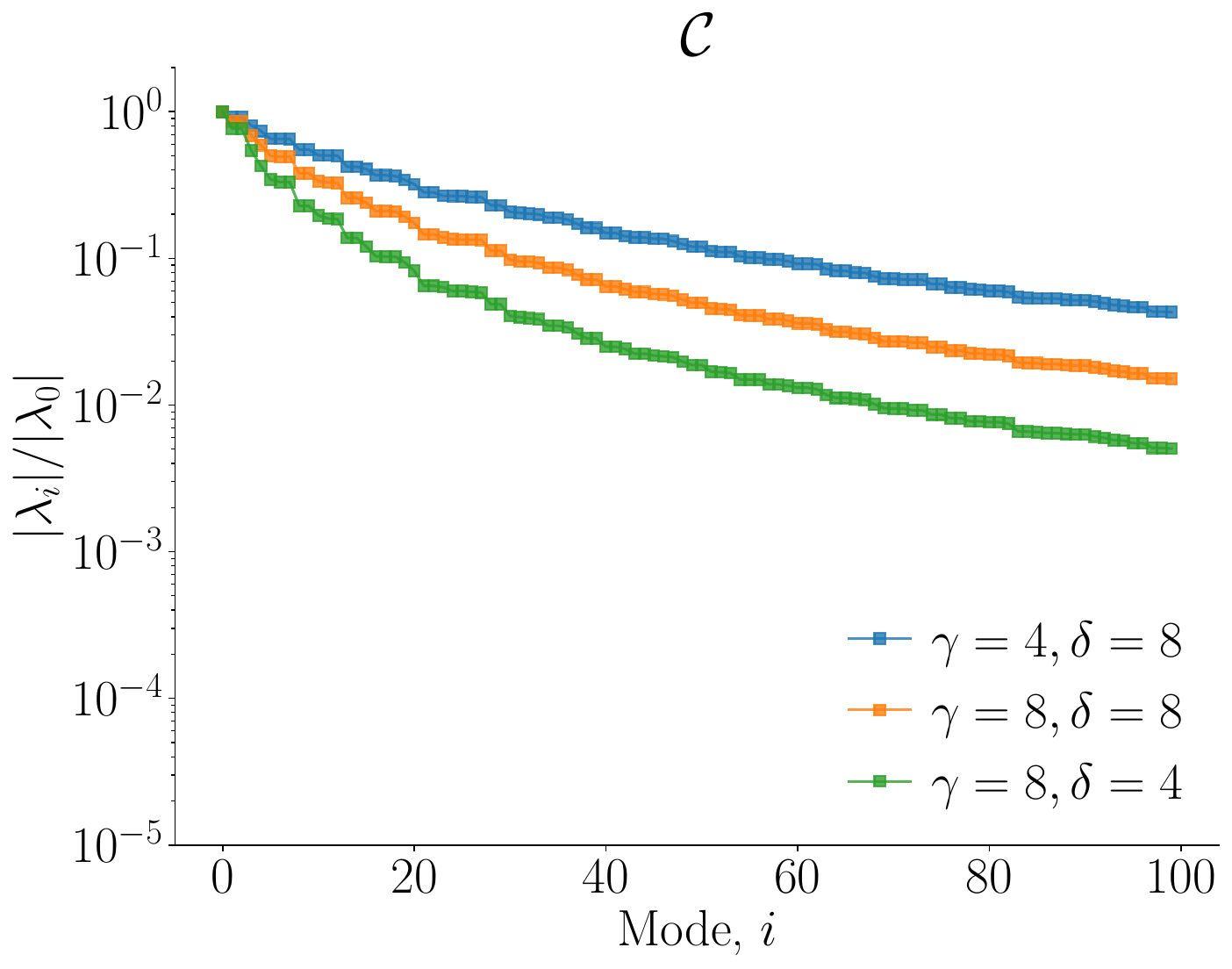}
		\caption{KLE spectrum}
	\end{subfigure}
	\begin{subfigure}{0.24\textwidth}
		\centering
		\includegraphics[width=\textwidth]{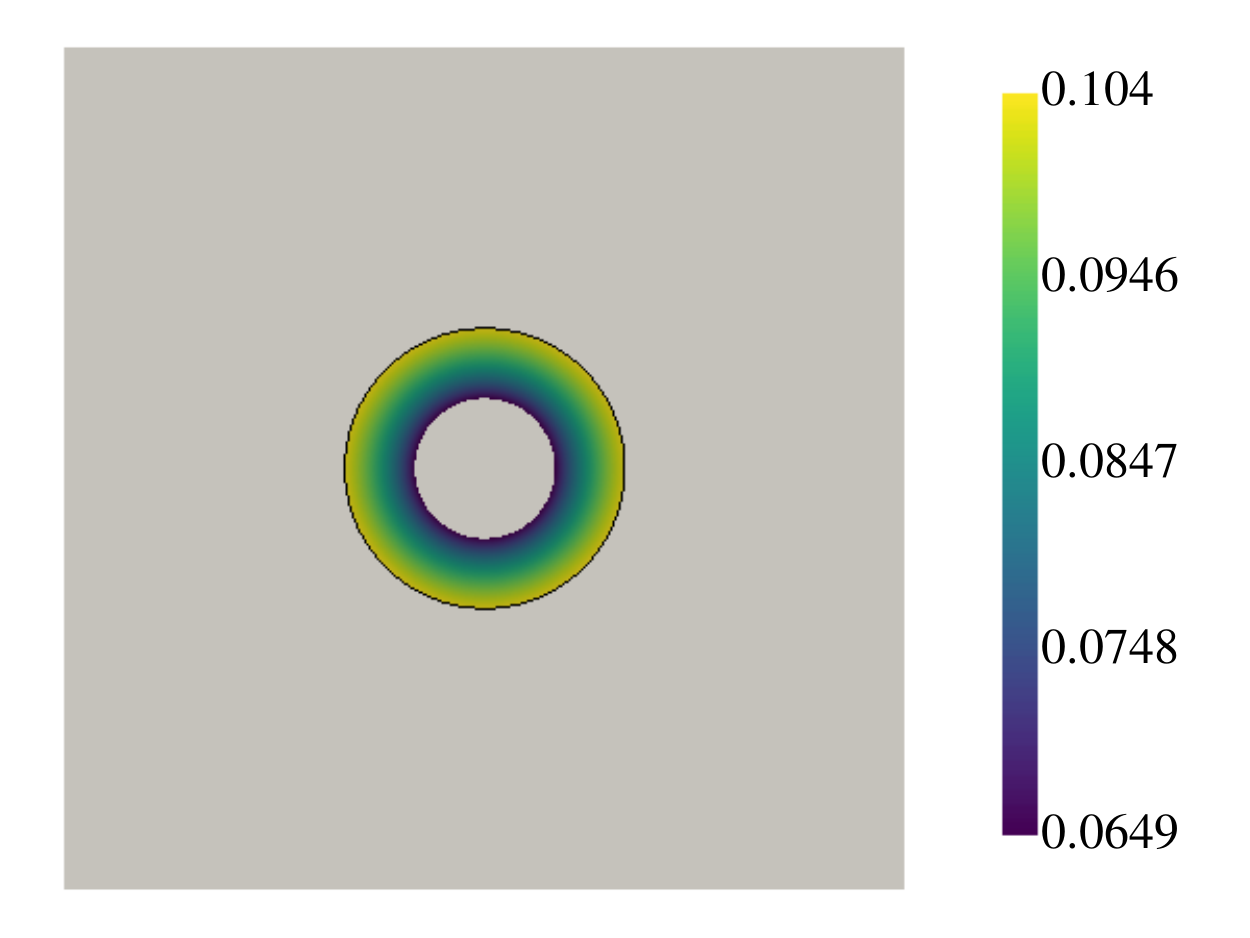}
		\caption{KLE eigenvector}
	\end{subfigure}
	\begin{subfigure}{0.24\textwidth}
		\centering
		\includegraphics[width=\textwidth]{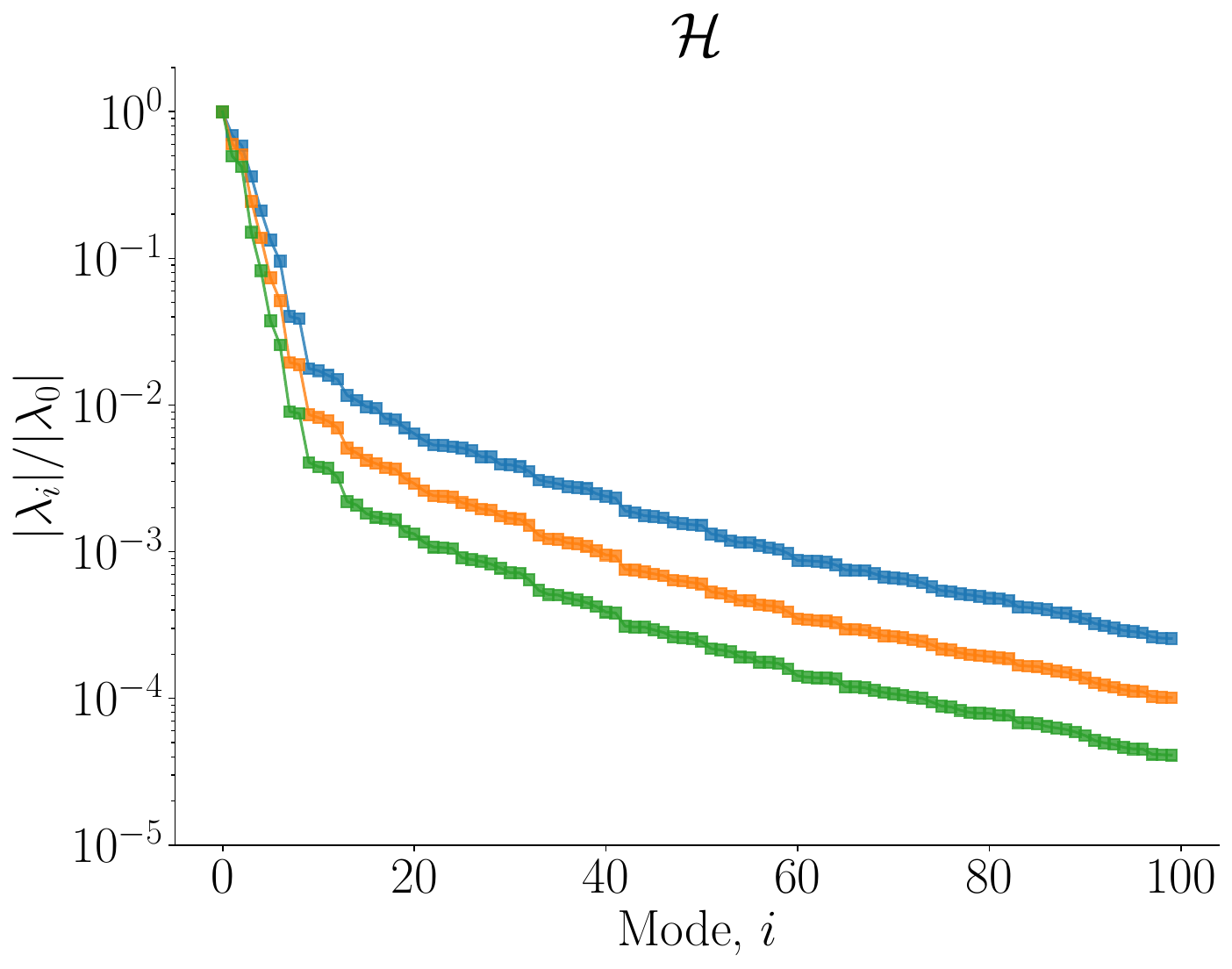}
		\caption{HEP spectrum}
	\end{subfigure}
	\begin{subfigure}{0.24\textwidth}
		\centering
		\includegraphics[width=\textwidth]{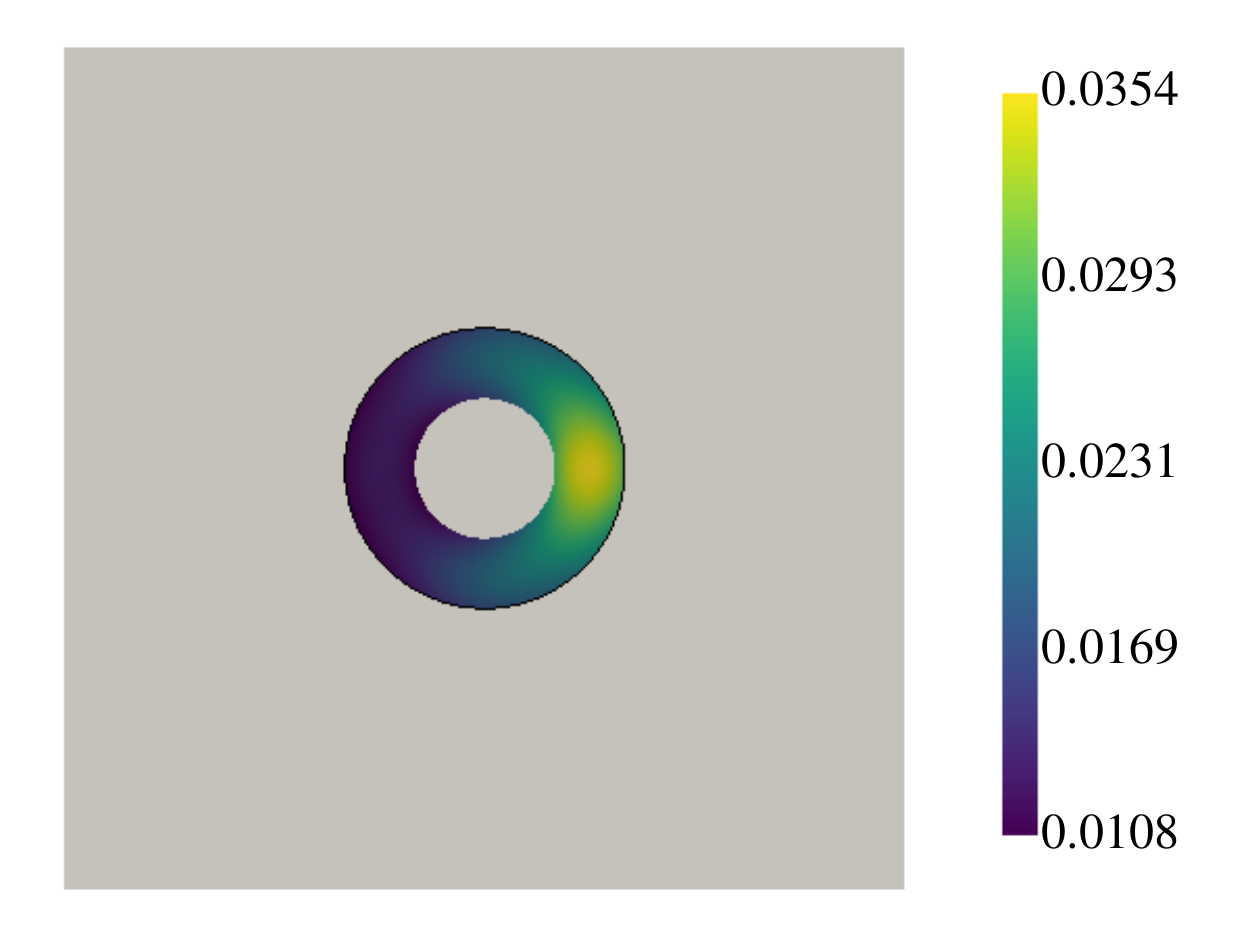}
		\caption{HEP eigenvector}
	\end{subfigure}
	\vspace{0pt}
	\caption{Spectra and dominant eigenvector of the covariance (left) and covariance-preconditioned Hessian at $\bar{m}$ for the Helmholtz QoI (right). Eigenvectors shown are for the case of $(\gamma, \delta) = (4,8)$.}
	\label{fig:helmholtz_eigenvectors}
\end{figure}

We focus on the CVaR, computing estimates for quantiles from $\alpha = 0.95$ to $\alpha = 0.999$ using Gaussian mixture Taylor approximations using up to 39 mixture components along the dominant eigenvectors of $\cC$ and $\cH$. 
We again compare this against ``ground truth'' estimates by MC sampling using $10^5$ samples. 
The relative errors of Gaussian mixture Taylor approximations for CVaR with $\alpha = 0.95$ are shown in Figure \ref{fig:helmholtz_cvar_results} for the three cases of $(\gamma, \delta) = (4, 8)$, $(8, 8)$, and $(8, 4)$. For reference, we also plot the relative RMSE of MC estimates using up to $10^3$ samples.
In this example, we observe that the KLE-based mixture approximations do not yield significant improvements over the standard Taylor approximation. 
On the other hand, the HEP-based mixture approximations yield much larger improvements, and are 1--2 orders of magnitude more accurate than MC estimates using the same number of state PDE solves. 

We also present the relative errors for the range of quantile values in Figure \ref{fig:helmholtz_quantile_sweep}. 
These are again shown for the mixture Taylor approximations with $N_{\mix} = 39$, as well as the relative RMSEs of MC estimates using $10^3$ and $10^4$ samples.
Here, the HEP is also shown to be more effective than the KLE for constructing the mixture approximation. 
In particular, the quadratic approximations using the HEP eigenvectors yield $<1\%$ errors across the tests considered, and consistently yield smaller errors than the MC estimates using $10^4$ samples.

\begin{figure}[h!]
	\centering
	\begin{subfigure}{0.32\textwidth}
		\centering
		\includegraphics[width=\textwidth]{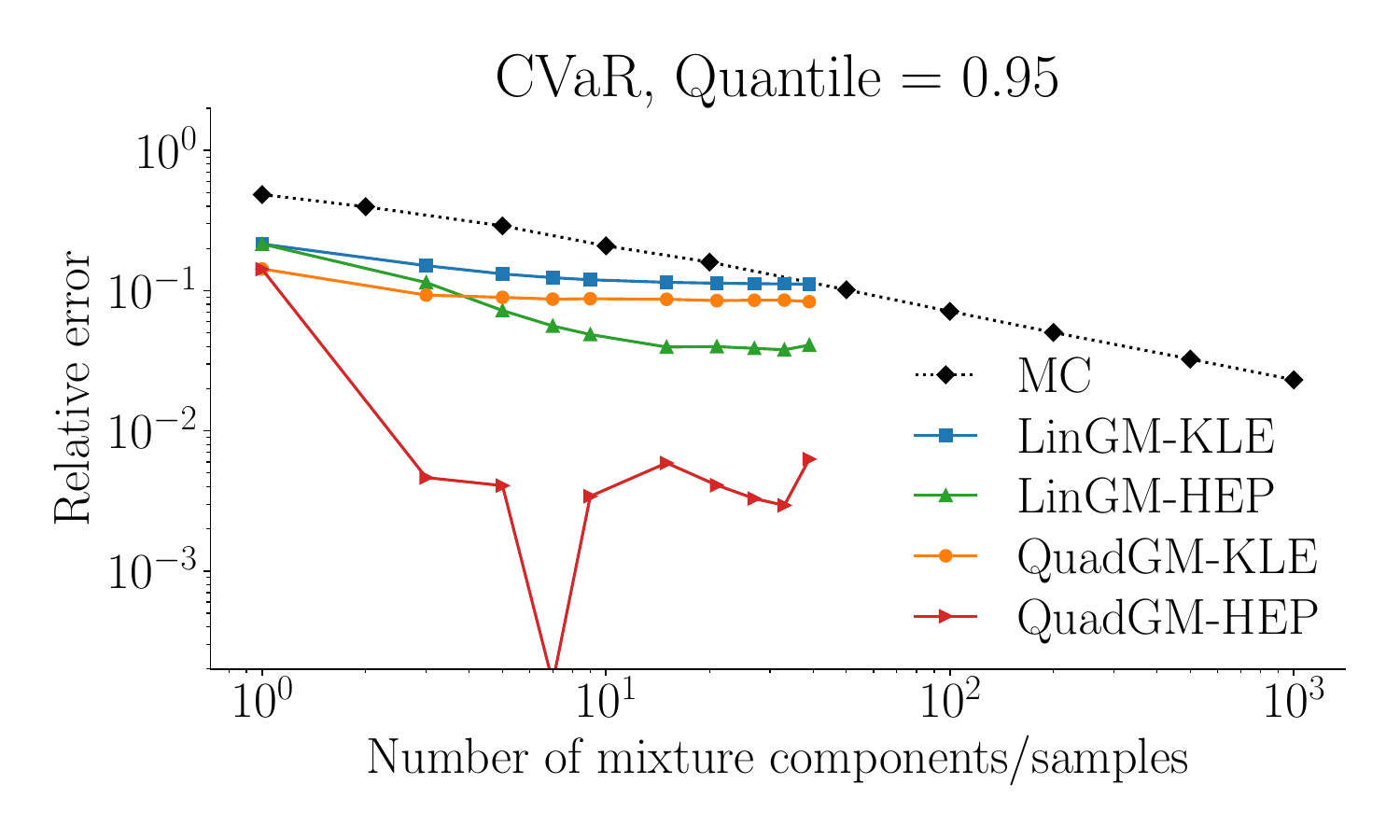}
		\caption{$\gamma=4, \delta=8$}
	\end{subfigure}
	\begin{subfigure}{0.32\textwidth}
		\centering
		\includegraphics[width=\textwidth]{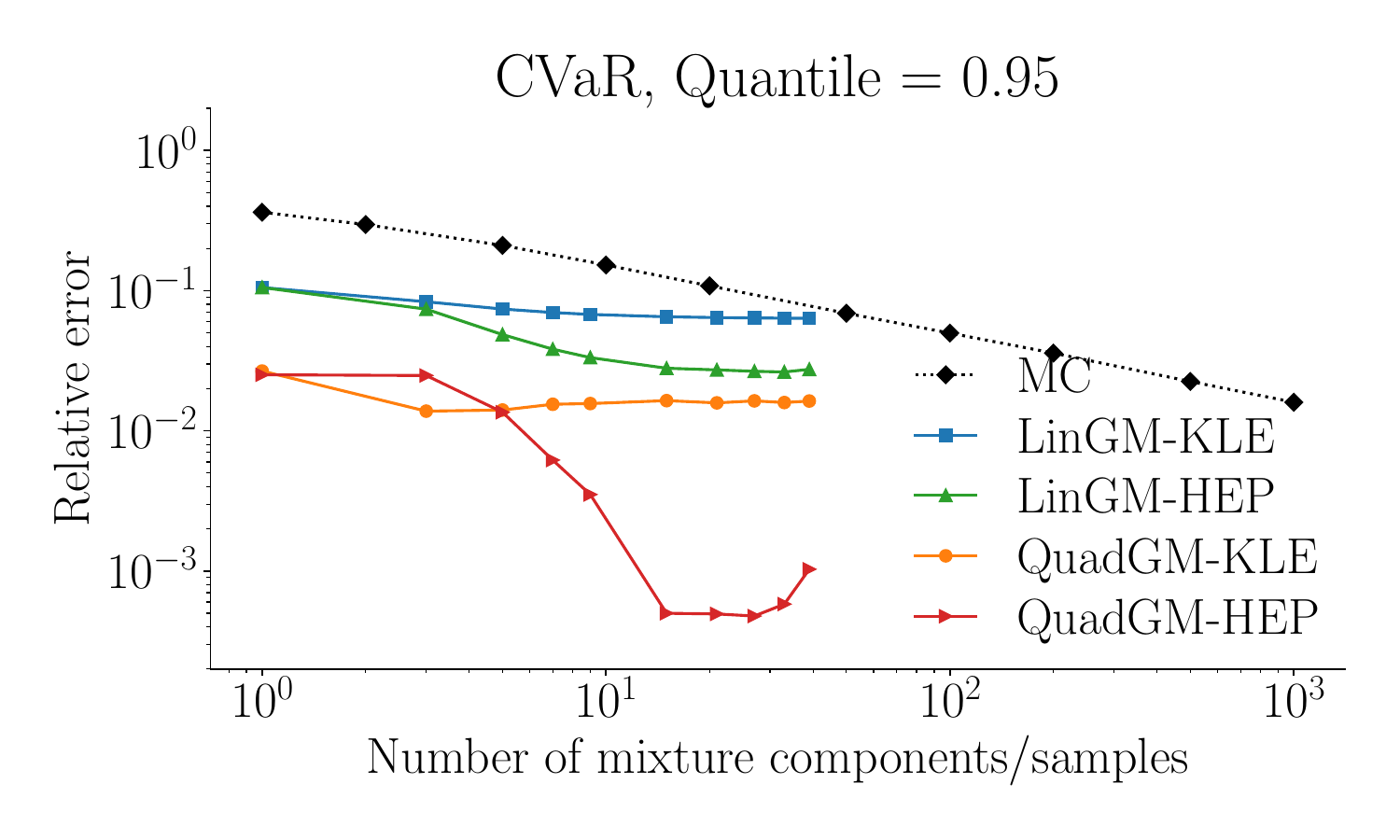}
		\caption{$\gamma=8, \delta=8$}
	\end{subfigure}
	\begin{subfigure}{0.32\textwidth}
		\centering
		\includegraphics[width=\textwidth]{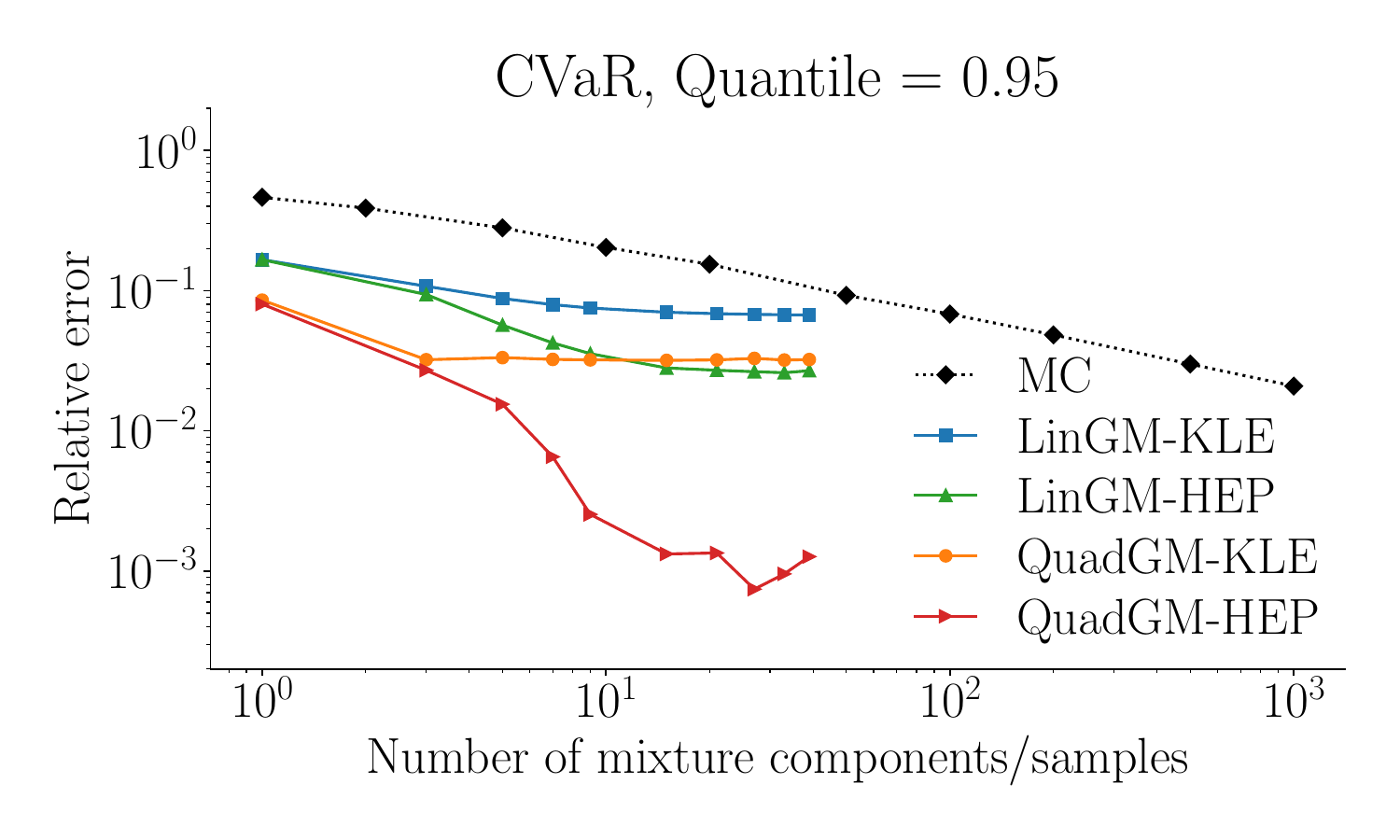}
		\caption{$\gamma=8, \delta=4$}
	\end{subfigure}
	\caption{Relative errors for estimates of CVaR ($\alpha = 0.95$) using linear and quadratic Gaussian mixture Taylor approximations with up to 
		$N_{\mix} = 39$ components along the dominant eigendirection. 
		Results are for $(\gamma, \delta)$ = (4, 8), (8, 8), and (8, 4), from left to right.}
	\label{fig:helmholtz_cvar_results}
\end{figure}

\begin{figure}[htbp!]
	\centering
	\begin{subfigure}{0.32\textwidth}
	\centering
	\includegraphics[width=\textwidth]{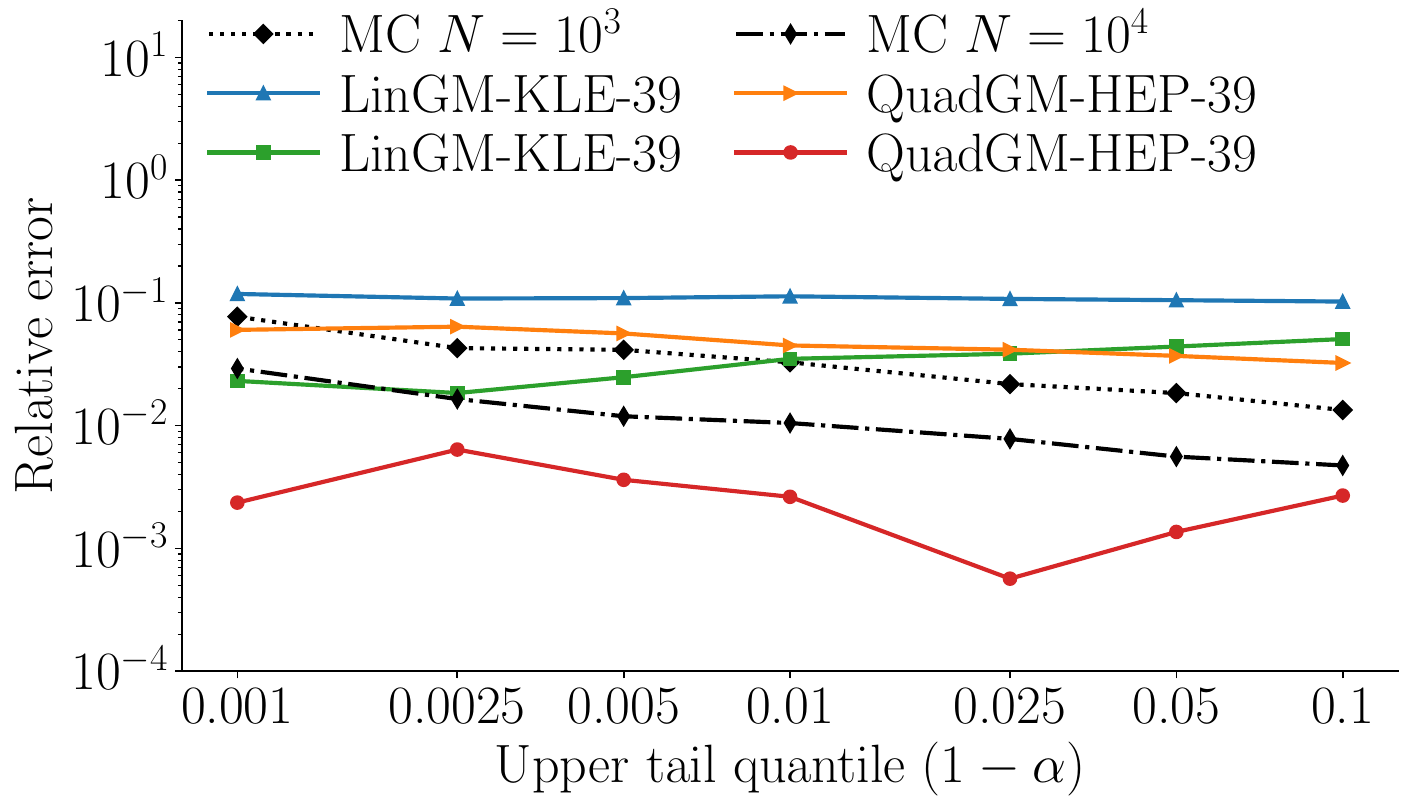}
	\caption{$\gamma=4, \delta=8$}
	\end{subfigure}
	\begin{subfigure}{0.32\textwidth}
	\centering
	\includegraphics[width=\textwidth]{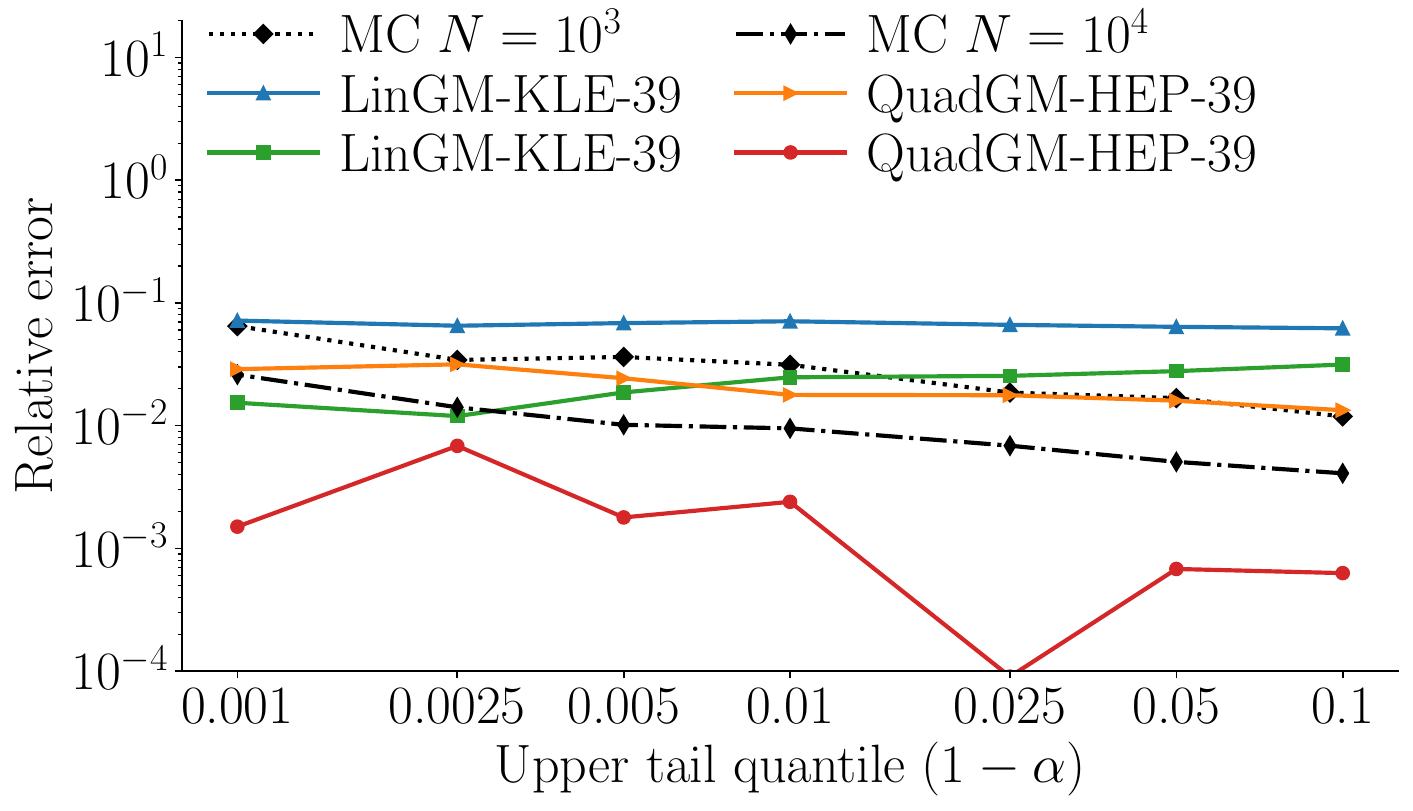}
	\caption{$\gamma=8, \delta=8$}
	\end{subfigure}
	\begin{subfigure}{0.32\textwidth}
	\centering
	\includegraphics[width=\textwidth]{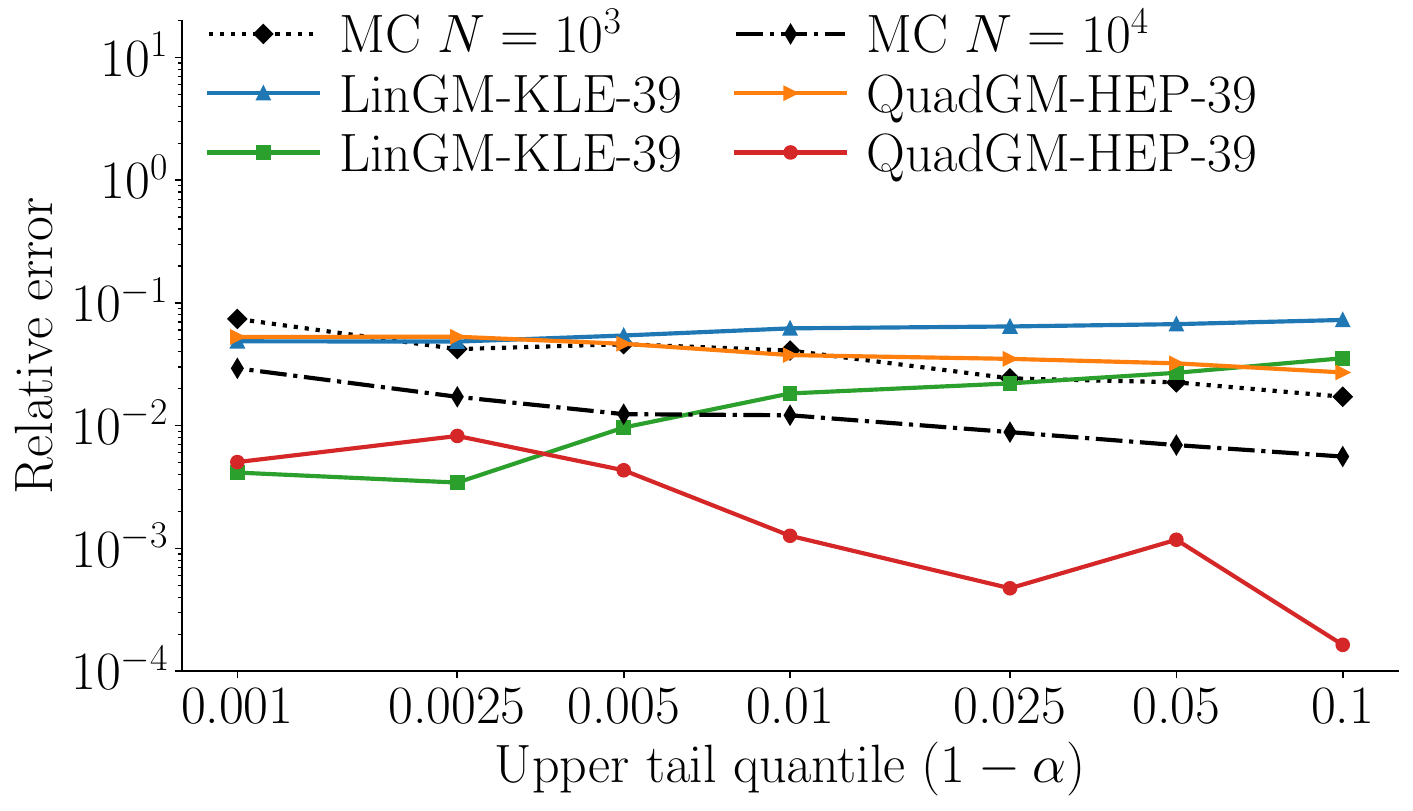}
	\caption{$\gamma=8, \delta=4$}
	\end{subfigure}
	\vspace{0pt}
	\caption{Relative errors for estimates of CVaR using 39 mixture components along the dominant KLE or HEP directions as a function of $1-\alpha$, for quantiles $\alpha$ from $0.9$ to $0.999$. These are compared against MC estimates using $10^3$ and $10^4$ samples. Results are shown for the cases of $(\gamma, \delta)$ = (4, 8), (8, 8), and (8, 4), from left to right.}
	\label{fig:helmholtz_quantile_sweep}
\end{figure}

\section{Conclusion}\label{sec:conclusion}
In this work, we presented a framework for improving the accuracy of Taylor approximations for risk measures of QoIs with uncertain parameters. 
A Gaussian mixture approximation of the underlying Gaussian measure is able to significantly improve the accuracy of the Taylor approximation, mitigating the deficiency of Taylor approximations for distributions with significant variances.
In our numerical results, using the mixture Taylor approximation with $\cO(10^1)$--$\cO(10^2)$ state PDE solves, risk measures can be estimated to within $1\%$ error, a level of accuracy comparable to Monte Carlo estimators with $\cO(10^{4})$ samples. 
This is also experimentally shown to be effective for tail based risk measures, such as the CVaR, which are often difficult to estimate using standard Monte Carlo sampling.
Thus, the proposed method can serve as a way to rapidly estimate risk measures when computational budget is limited to pre-asymptotic regimes.

In its current form, the method considered in this work also has several limitations. For example, we do observe that for QoIs with more extreme variations, Gaussian mixture approximations, though an improvement over a standard Taylor approximation, still may not be sufficient.
Moreover, we focused on mixture construction using a decomposition along a single direction, while further improvements to accuracy may require decompositions along additional directions. However, subsequent expansion directions become increasingly expensive, as a naive tensor product based construction of the Gaussian mixture approximation suffers from the usual curse of dimensionality. 
One may need to consider an adaptive mixture construction approach with sparsification schemes to further extend this method. 

\section{Acknowledgements}
The authors would like to acknowledge Professor Ryan P. Russell for the helpful discussions.

\appendix 

\section{Sampling the low rank Quadratic Taylor approximation}\label{sec:appendix_sampling}
We present the derivation for the sampling expression \eqref{eq:quadratic_sample}, which assumes $m \sim \cN(\bar{m}, \cC)$ is a Gaussian distribution on $\cM = \bR^n$ arising from the discretization of the space $\cM$ and $\cC$ is the covariance matrix corresponding to the discretization of the covariance operator.
Recall that the low rank quadratic approximation is given by
\[
Q_{\qua, \lr} = Q(\bar{m}) + \linner g, m - \bar{m} \rinner_{\cM}
+ \frac{1}{2} \sum_{i=1}^{r_{\cH}} \lambda_j \linner \cC^{-1} \phi_j, m - \bar{m} \rinner_{\cM}^2,
\]
with $g := D Q(\bar{m})$, 
and $\lambda_j, \phi_j$ is obtained from the generalized eigenvalue problem
$ D^2Q(\bar{m}) \phi_j = \lambda_j \cC^{-1} \phi_j. $
We also define $\psi_j = \cC^{-1/2} \phi_j$, so that 
$\cH \psi_j = \cC^{1/2} D^2 Q (\bar{m}) \cC^{1/2} \psi_j = \lambda_j \psi_j.$
Moreover, since $m \in \bR^{n}$, samples of $m - \bar{m}$ can be drawn from $m - \bar{m} = \cC^{1/2} \xi$, where $\cC^{1/2}$ is the square root of the covariance matrix 
and $\xi \sim \cN(0, \cI_{\bR^{n}})$ in $\bR^{n}$. 
This allows us to write the low rank quadratic approximation as 
\begin{equation}
Q_{\qua, \lr} \equaldist Q(\bar{m}) 
+ \linner g, \cC^{1/2} \xi \rinner_{\cM} + \frac{1}{2} \sum_{i=1}^{r_{\cH}} \lambda_j \linner \psi_j, \xi \rinner_{\cM}^2.
\end{equation}

In order to avoid sampling vectors $\xi \in \bR^{n}$, we consider an alternative representation. To this end, we let 
$\Psi = [\psi_1, \dots, \psi_{r_{\cH}}, \psi_{r_{\cH}+1}, \dots, \psi_n]$, 
where $\psi_{r_{\cH}+1}, \dots, \psi_{n}$ are the remaining eigenvectors of 
$\cH$
(completed to span $\bR^{n})$. Since this is an orthonormal basis for $\bR^{n}$, $\Psi \Psi^{\transpose} = \Psi^{\transpose} \Psi = \cI_{\bR^{n}}$. 
This allows to write 
\[
Q_{\qua, \lr} \equaldist Q(\bar{m}) + \linner g, \cC^{1/2} \Psi \Psi^{\transpose} \xi \rinner_{\cM}
+ \frac{1}{2} \linner \Psi^{\transpose} \xi, \Lambda \Psi^{\transpose} \xi \rinner_{\cM}
,
\]
where $\Lambda$ is a diagonal matrix with entries $\{\lambda_j\}_{j=1}^{n}$ on the diagonal. 
By orthogonality, we also have $y := \Psi^{\transpose} \xi  \sim \cN(0, \cI_{\bR^{n}})$. 
We now decompose $y$ into $x_{r_{\cH}} = [y_1, \dots, y_{r_{\cH}}]$ and $x_{r_{\cH}}' = [y_{r_{\cH}+1}, \dots y_{n}]$, and correspondingly, $\Psi$ into $\Psi_{r_{\cH}} = [\psi_1, \dots, \psi_{r_{\cH}}]$, $\Psi_{r_{\cH}}' = [\psi_{r_{\cH}+1}, \dots, \psi_{n}]$, and write $\Lambda_{r_{\cH}} = \diag(\lambda_1, \dots, \lambda_{r_{\cH}}).$ The low-rank quadratic approximation is therefore
\[
Q_{\qua, \lr}(m) \equaldist Q(\bar{m}) 
+ \linner g, \cC^{1/2} (\Psi_{r_{\cH}} x_{r_{\cH}} + \Psi_{r_{\cH}}' x_{r_{\cH}}') \rinner_{\cM}
+ \frac{1}{2} \linner x_{r_{\cH}}, \Lambda_{r_{\cH}} x_{r_{\cH}} \rinner_{\cM}.
\]

Due to the independence of $x_{r_{\cH}}$ and $x_{r_{\cH}}'$, 
we can decompose the expression into two independent parts that can be sampled separately, 
$Q_{\qua, \lr}(m) \equaldist \zeta_1 + \zeta_2$, where
\[
\zeta_1 := Q(\bar{m}) + \linner g, \cC^{1/2} (\Psi_{r_{\cH}} x_{r_{\cH}}) \rinner_{\cM} 
+ \frac{1}{2} \linner x_{r_{\cH}}, \Lambda_{r_{\cH}} x_{r_{\cH}} \rinner_{\cM},
\qquad \zeta_2 :=
\linner g, \cC^{1/2} (\Psi_{r_{\cH}}' x_{r_{\cH}}') \rinner_{\cM}.
\]
For the term $\zeta_2$, which involves $x_{r_{\cH}}'$, we can avoid computing $\Psi_{r_{\cH}}'$ by recognizing that 
\[
\Psi_{r_{\cH}}' x_{r_{\cH}}' \equaldist (\cI_{\bR^{n}} - \Psi_{r_{\cH}} \Psi_{r_{\cH}}^{\transpose}) z, \quad z \sim \cN(0, \cI_{\bR^{n}}).
\]
Thus, $\zeta_2$ becomes
\[
\zeta_2 \equaldist 
\linner g, \cC^{1/2}(\cI_{\bR^{n}} - \Psi_{r_{\cH}} \Psi_{r_{\cH}}^{\transpose}) z \rinner_{\cM} \sim \cN(0, \sigma_{r_{\cH}}'^2),
\]
which is normally distributed due to the linearity in $z$ and has variance
\[
(\sigma_{r_{\cH}}')^2 = \linner g, \cC^{1/2}(\cI_{\bR^{n}} - \Psi_{r_{\cH}} \Psi_{r_{\cH}}^{\transpose}) \cC^{1/2} g \rinner_{\cM}
= \linner g, (\cC - \Phi_{r_{\cH}} \Phi_{r_{\cH}}^{\transpose}) g \rinner_{\cM}
= \linner g, \cC g\rinner_{\cM} - \sum_{i=1}^{r_{\cH}} \linner g, \phi_j \rinner_{\cM}^2,
\]
where we have used the notation $\Phi_{r_{\cH}} := [\phi_1, \dots, \phi_{r_{\cH}}] = \cC^{1/2} \Psi_{r_{\cH}}$.
We can sample from its distribution by taking $\sigma_{r_{\cH}}' y_0$, where $y_0 \sim \cN(0,1)$.
On the other hand, $\zeta_1$ is simply
\[ \zeta_1  = 
Q(\bar{m}) + \sum_{i=1}^{r_{\cH}} \linner g, \phi_j \rinner_{\cM} y_j + \lambda_j y_j^2.
\]
Combining the two independent components yields 
\begin{equation}
Q_{\qua, \lr} \equaldist Q(\bar{m}) 
+ \left(\linner g, \cC g \rinner_{\cM} - \sum_{i=1}^{r_{\cH}} \linner g, \phi_j \rinner_{\cM} \right)^{\!\!1/2} \!\!\! y_0
+ \sum_{i=1}^{r_{\cH}} \linner g , \phi_j \rinner_{\cM} y_j + \lambda_j y_j^2,
\end{equation}
where $y_j \sim \cN(0,1)$ for $j = 0, \dots, r_{\cH}$ are i.i.d. standard Gaussian random variables.
\section{Proofs for error analysis} \label{appendix:proofs}
\subsection{Proof of Proposition \ref{thm:gm_1d}}
\begin{proof} (Proposition \ref{thm:gm_1d})
Our proof is constructive. We define mixture approximations using equally spaced Gaussians on an interval $(-L, L)$ for any $L >0$. That is, we consider the mixture approximation $\nu_{0,N} = \sum_{k=1}^{N} w_N^k \cN(\mu_N^k, \sigma_N^2)$, with means at the center of $N$ equally sized intervals from $-L$ to $L$, 
\begin{equation}
	\mu_N^{k} = -L - \frac{L}{N} + \frac{2L}{N} k,
\end{equation}
standard deviations $\sigma_N = N^{-p}$, and weights matching the shape of the reference density
\begin{equation}
	w_N^k = \frac{\pi_0(\mu_N^k)}{\sum_{k=1}^{N}\pi_0(\mu_N^k)},
\end{equation}
which have been normalized to maintain a sum to unity. 
The resulting mixture density is 
\begin{equation}
	\pi_N(x) = \frac{
		\sum_{k=1}^{N} \pi_0(\mu_N^k) \frac{1}{\sigma_N \sqrt{2 \pi}} \exp\left( - \frac{(x-\mu_N^k)^2}{2\sigma_N^2}\right)
	}{
		\sum_{k=1}^{N} \pi_0(\mu_N^k)
	}.
\end{equation}

From Lemma \ref{lemma:ratio_convergence}, we know the ratio $\pi_{N}(x) / \pi_0(x)$ converges to
\begin{equation}
	\lim_{N \rightarrow \infty}\frac{\pi_N(x)}{\pi_0(x)} = 
	\begin{cases}
		1/\nu_0(B_L) & |x| < L \\
		0 & |x| > L 
	\end{cases}.
\end{equation}
and is also uniformly bounded from above by $2/\nu_0(B_L)$ for sufficiently large $N$.

Recall that TV distance can be written as 
\[
\tv(\nu_0, \nu_N) 
= \frac{1}{2}\int_{\bR} \left| 1 - \frac{\pi_N(x)}{\pi_0(x)} \right| \pi_0(x) dx,
\]
which can be split into
\[
\tv(\nu_0, \nu_N) 
= \frac{1}{2}\int_{B_L} \left| 1 - \frac{\pi_N(x)}{\pi_0(x)} \right| \pi_0(x) dx 
+ \frac{1}{2}\int_{B_L^c} \left| 1 - \frac{\pi_N(x)}{\pi_0(x)} \right| \pi_0(x) dx.
\]
Since $L$ is arbitrary, we can choose $L$ sufficiently large so that 
\[
\left(1  + \frac{2}{\nu_0(B_L)} \right) (1 - \nu_0(B_L)) \leq \epsilon
\]
and simultaneously
\begin{equation}\label{eq:size_of_ball}
	\left|(1 - 1/\nu_0(B_L))\nu_0(B_L)\right| = (1 - \nu_0(B_L)) \leq \frac{\epsilon}{2}.
\end{equation}
We can then select $N$ sufficiently large such that the bound 
$ {\pi_N(x)}/{\pi_0(x)} \leq {2}/{\nu_0(B_L)} $
is satisfied and so the second integral is bounded, 
\begin{equation}\label{eq:outside_bound}
\frac{1}{2}\int_{B_L^c} \left| 1 - \frac{\pi_N(x)}{\pi_0(x)} \right| \pi_0(x) dx \leq
\frac{1}{2}\left(1  + \frac{2}{\nu_0(B_L)} \right) (1 - \nu_0(B_L)) 
\leq \frac{\epsilon}{2}.
\end{equation}
In the first integral, we know 
$$
1 - \frac{\pi_N(x)}{\pi_0(x)} \rightarrow 1 - \frac{1}{\nu_0(B_L)} \quad \forall x \in B_L,
$$
and we have a bound 
$$\left| 1 - \frac{\pi_N(x)}{\pi_0(x)} \right| \leq 1 + \frac{2}{\nu_0(B_L)}.$$
The Lebesgue dominated convergence theorem asserts that 
$$
\int_{B_L} \left| 1 - 	\frac{\pi_N(x)}{\pi_0(x)} \right| \pi_0(x) dx
\rightarrow \left|1 - \frac{1}{\nu_0(B_L)} \right| \nu_0(B_L)
= |1 - \nu_0(B_L)|.
$$
Thus, we can pick $N$ large such that 
\begin{equation}\label{eq:inside_bound}
	\left| \int_{B_L} \left| 1 - \frac{\pi_N(x)}{\pi_0(x)}\right| \pi_0(x) dx - (1 - \nu_0(B_L)) \right| \leq \frac{\epsilon}{2}.
\end{equation}
The overall bound follows from combining \eqref{eq:inside_bound} with \eqref{eq:size_of_ball} and \eqref{eq:outside_bound}.
\end{proof}

\begin{lemma} \label{lemma:ratio_convergence}
Let $N \in \bN$, $p \in (0,1)$, $L > 0$, and 
\begin{equation}
	\mu_N^{k} = -L -\frac{L}{N} + \frac{2L}{N} k, \quad
	w_N^k = \frac{\pi_0(\mu_N^k)}{\sum_{k=1}^{N}\pi_0(\mu_N^k)}, \quad
	\sigma_N = N^{-p}
\end{equation}
be the means, weights, and standard deviations corresponding to the Gaussian mixture $\nu_{0,N} = \sum_{i=1}^{k} w_N^k \cN(\mu_{N}^{k}, \sigma_N^2)$ with $\pi_{N}$ as its PDF. 
Then the ratio 
$\pi_{N}(x)/\pi_0(x)$, where $\pi_0(x)$ is the PDF for $\nu_0 = \cN(0, 1)$, converges pointwise with limits
\begin{equation}\label{eq:pointwise_limit_ratio}
	\lim_{N \rightarrow \infty}\frac{\pi_N(x)}{\pi_0(x)} = 
	\begin{cases}
		1/\nu_0(B_L) & |x| < L \\
		0 & |x| > L
	\end{cases}
\end{equation}
where 
$B_L = (-L, L) \subset \bR$. Moreover, given any $L$, there exists a uniform bound
\begin{equation}
	\frac{\pi_N(x)}{\pi_0(x)} \leq \frac{2}{\nu_0(B_L)},
\end{equation}
for $N$ sufficiently large. 
\end{lemma}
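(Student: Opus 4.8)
The plan is to write the mixture density in the ratio form $\pi_N = T_N/S_N$, where $S_N := \sum_{k=1}^N \pi_0(\mu_N^k)$ is the normalizing sum and $T_N(x) := \sum_{k=1}^N \pi_0(\mu_N^k)\,G_{\sigma_N}(x-\mu_N^k)$, writing $G_\sigma(y) := (\sigma\sqrt{2\pi})^{-1}\exp(-y^2/2\sigma^2)$ for the centered Gaussian kernel. The key structural observation is that the $\mu_N^k$ are exactly the midpoints of the $N$ equal subintervals of $[-L,L]$, with spacing $\Delta_N = 2L/N$, so that $\Delta_N S_N$ and $\Delta_N T_N(x)$ are midpoint Riemann sums. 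The denominator involves the \emph{fixed} continuous integrand $\pi_0$, so $\Delta_N S_N \to \int_{-L}^L \pi_0 = \nu_0(B_L)$ by the midpoint rule. Since $\pi_N(x)/\pi_0(x) = \big(\Delta_N T_N(x)\big)/\big(\Delta_N S_N\,\pi_0(x)\big)$, everything reduces to understanding $\Delta_N T_N(x)$.

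For the pointwise limit on $|x|<L$, I would compare $\Delta_N T_N(x)$ with the convolution integral $I_N(x) := \int_{-L}^L \pi_0(t)\,G_{\sigma_N}(x-t)\,dt$ in two steps. First, the Peano-kernel bound for the midpoint rule gives $|\Delta_N T_N(x) - I_N(x)| \le C\,\Delta_N^2 \int_{-L}^L |h_N''|$ with $h_N(t)=\pi_0(t)G_{\sigma_N}(x-t)$; using $\int_\bR |G_{\sigma_N}''| \le C'\sigma_N^{-2}$, this is bounded by a constant multiple of $\Delta_N^2 \sigma_N^{-2} = 4L^2 N^{2p-2}\to 0$, which is exactly where the restriction $p<1$ enters. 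Second, $G_{\sigma_N}$ is an approximate identity, so for interior $x$ the concentration point $t=x$ lies strictly inside $[-L,L]$ and $I_N(x)\to\pi_0(x)$; combined with the denominator limit this yields $\pi_N(x)/\pi_0(x)\to 1/\nu_0(B_L)$. For $|x|>L$ I would instead use the crude bound $\pi_N(x) \le \max_k G_{\sigma_N}(x-\mu_N^k) = G_{\sigma_N}(x-\mu_N^N)$ (taking $x>L$; the other sign is symmetric), valid since the weights sum to one and $\mu_N^N = L - L/N$ is the nearest node. Then $\pi_N(x)/\pi_0(x) \le \sigma_N^{-1}\exp\big(x^2/2 - (x-\mu_N^N)^2/(2\sigma_N^2)\big)\to 0$ because $(x-\mu_N^N)^2\to(x-L)^2>0$ while $\sigma_N\to 0$.

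I expect the uniform bound $\pi_N/\pi_0 \le 2/\nu_0(B_L)$ to be the main obstacle, since it must hold for \emph{all} $x\in\bR$ (it serves as a dominating function on both $B_L$ and $B_L^c$ in the proof of Proposition~\ref{thm:gm_1d}) and pointwise convergence does not suffice. My plan is to split $\bR$ at radius $R:=L+1$. On $|x|>R$, the max-term bound above is maximized at the endpoint (for $N$ large, as the critical point $\mu_N^N/(1-\sigma_N^2)\to L<R$), and using $R-\mu_N^N = 1 + L/N > 1$ gives $\sup_{|x|>R}\pi_N/\pi_0 \le \sigma_N^{-1}e^{R^2/2}e^{-1/(2\sigma_N^2)}\to 0$, which is below $2/\nu_0(B_L)$ for $N$ large.

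On the compact region $|x|\le R$ the max-term bound is useless (it grows like $\sigma_N^{-1}$ where $\pi_0$ is only bounded below), so here I would invoke the exact product-of-Gaussians identity
\[
\pi_0(\mu)\,G_\sigma(x-\mu) = \frac{1}{2\pi\sigma}\exp\!\Big(-\frac{x^2}{2(\sigma^2+1)}\Big)\exp\!\Big(-\frac{\sigma^2+1}{2\sigma^2}\Big(\mu-\frac{x}{\sigma^2+1}\Big)^2\Big),
\]
together with the elementary fact that a sum of a nonnegative unimodal function over an arithmetic grid is at most $\Delta_N^{-1}\!\int(\cdot) + \max(\cdot)$. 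This yields
\[
\frac{T_N(x)}{\pi_0(x)} \le e^{x^2\sigma_N^2/(2(\sigma_N^2+1))}\Big(\frac{N}{2L\sqrt{\sigma_N^2+1}} + \frac{1}{\sqrt{2\pi}\,\sigma_N}\Big),
\]
where on $|x|\le R$ the prefactor is at most $e^{R^2\sigma_N^2/2}\to 1$; dividing by $S_N\sim \tfrac{N}{2L}\nu_0(B_L)$ makes the right-hand side tend to $1/\nu_0(B_L)$ uniformly in $x$, and the factor-of-two slack absorbs the vanishing corrections for $N$ large. I anticipate that controlling this grid-sum-to-integral comparison uniformly in $x$ and tracking the several $o(1)$ remainders is the most delicate bookkeeping, whereas the interior pointwise limit is comparatively routine.
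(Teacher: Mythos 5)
Your proposal is correct, and it reaches both conclusions of the lemma by a genuinely different technical route than the paper, even though the two arguments share the same skeleton (denominator as a midpoint Riemann sum converging to $\nu_0(B_L)$, and the product-of-Gaussians/completed-square identity). For the pointwise limit, the paper changes variables to $z_N^k(x)=(\mu_N^k-(1-\sigma_N^2)x)/s_N$ and treats the numerator as a Riemann sum on a ``moving window,'' controlling the tail with a truncation radius $R$, the bound $e^{-z^2/2}\le 2e^{-|z|}$, a geometric series, and a squeeze argument on $\liminf$/$\limsup$; you instead compare the sum directly to the convolution $\int_{-L}^{L}\pi_0(t)\,G_{\sigma_N}(x-t)\,dt$ via the midpoint-rule error bound $O(\Delta_N^2\sigma_N^{-2})=O(N^{2p-2})$ and then invoke the approximate-identity property of the Gaussian kernel. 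Your version isolates the role of the hypothesis $p<1$ more transparently (it is exactly what makes the quadrature error vanish), and your max-term bound for $|x|>L$ is simpler than the paper's tail estimate. For the uniform bound, the paper's geometric-series argument yields the constant $4/\sqrt{2\pi}<2$ in the numerator, whereas your unimodal sum-versus-integral comparison $\sum_k f(\mu_N^k)\le \Delta_N^{-1}\int f+\max f$ actually gives the sharper asymptotic constant $1/\nu_0(B_L)+o(1)$ uniformly on $|x|\le R$, with the exterior region $|x|>R$ killed outright by the max-term bound; the factor of two is then pure slack rather than an artifact of the estimates. The only places requiring care in a full write-up are the ones you already flag: the constant in the unimodal grid-sum bound (one may need $2\max f$ or $3\max f$ depending on how the points adjacent to the mode are handled, but this is harmless since that term is $O(N^{p-1})$ after normalization), verifying that $\int_{-L}^{L}|\partial_t^2(\pi_0(t)G_{\sigma_N}(x-t))|\,dt=O(\sigma_N^{-2})$ uniformly in $x$, and checking that the critical point $\mu_N^N/(1-\sigma_N^2)$ of the exterior bound falls below $R=L+1$ for large $N$ so that the supremum over $|x|\ge R$ is attained at $|x|=R$. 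None of these is a gap; the argument goes through.
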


\begin{proof}

We can begin by writing the ratio of the mixture and reference densities as 
\begin{equation}
	\frac{\pi_N(x)}{\pi_0(x)} =
	\frac{
		\sum_{k=1}^{N} \sqrt{1 - \sigma_N^2} \exp\left( \frac{\squared{\sigma_N \mu_N^k}}{2(1-\sigma_N^2)}\right) 
			\frac{1}{s_N \sqrt{2 \pi}} 
			\exp\left( - \frac{(\mu_N^k - (1 - \sigma_N^2)x)^2}{2 s_N^2} \right)
	}{
		\sum_{k=1}^{N} \pi_0(\mu_N^k)
	},
\end{equation}
with $s_N = \sigma_{N} \sqrt{1-\sigma_N^2} \leq \sigma_N$,
which follows by completing the square for the quadratic forms.
We then multiply the numerator and denominator by $2L/N$
\begin{equation}
	\frac{\pi_N(x)}{\pi_0(x)} =
	\frac{\frac{2L}{N}
		\sum_{k=1}^{N} \sqrt{1 - \sigma_N^2}\exp\left( \frac{\squared{\sigma_N \mu_N^k}}{2(1-\sigma_N^2)}\right) 
			\frac{1}{s_N \sqrt{2 \pi}} 
			\exp\left( - \frac{(\mu_N^k - (1 - \sigma_N^2)x)^2}{2 s_N^2} \right)
	}{
		\frac{2L}{N}\sum_{k=1}^{N} \pi_0(\mu_N^k)
	},
\end{equation}
The denominator corresponds to a Riemann sum converging to $\nu_0(B_L)$ as $N \rightarrow \infty$.
We now focus on the numerator, which we denote by
\begin{equation}
a_N(x)
= \frac{2L}{N}\sum_{k=1}^{N} \sqrt{1-\sigma_N^2} \exp\left( \frac{\squared{\sigma_N \mu_N^k}}{2(1-\sigma_N^2)}\right) 
			\frac{1}{s_N \sqrt{2 \pi}} 
			\exp\left( - \frac{(\mu_N^k - (1 - \sigma_N^2)x)^2}{2s_N^2}\right).
\end{equation}
To prove the convergence of $a_N$, we derive upper and lower bounds of $a_N$ and apply the squeeze theorem. We begin with the upper bound, noting that we can bound the first part of the numerator by
$$ 
\sqrt{1 - \sigma_N^2} \exp\left(\frac{\squared{\sigma_N \mu_N^k}}{2(1-\sigma_N^2)}\right) 
\leq \exp \left(\frac{\sigma_N^2 L^2}{2(1-\sigma_N^2)} \right)
\rightarrow 1
$$
since $\mu_L^k \in [-L, L]$ and $\sigma_N \rightarrow 0$. In fact, the ratio $\sigma_N^2/(1-\sigma_N^2) = (N^{2p} - 1)^{-1}$ decreases monotonically with $N$. 
We now focus on the remaining part. For convenience, we let
\begin{equation}
b_N(x) := 
\frac{2L}{N} \sum_{k=1}^{N}
\frac{1}{s_N \sqrt{2 \pi}} 
\exp\left( \frac{(\mu_N^k - (1 - \sigma_N^2)x)^2}{2s_N^2}\right),
\end{equation}
which allows us us to write the upper bound as
\begin{equation}\label{eq:upper_bound_numerator}
a_N(x) \leq \exp \left(\frac{\sigma_N^2 L^2}{2(1-\sigma_N^2)} \right) b_N(x) =: u_N(x)
\end{equation}

We consider a change of variables, 
\begin{align}
z_N^k(x) = \frac{\mu_N^k - (1-\sigma_N^2)x}{s_N}.
\end{align}
The points $z_N^k(x)$ are linearly spaced at distance
\[
\Delta z_N = \frac{2L}{N s_N} = \frac{2L N^{p-1}}{\sqrt{1 - N^{-2p}}}
\]
with start and end points within the interval
\[
z_N^1(x) = \frac{-L + L/N - x}{s_N} + \frac{\sigma_N}{\sqrt{1 - \sigma_N^2}} x,
\quad
z_N^N(x) = \frac{L - L/N - x}{s_N} + \frac{\sigma_N}{\sqrt{1 - \sigma_N^2}} x,
\]
respectively. We can therefore rewrite $b_N$ as
\[
b_N(x) = \Delta z_N \sum_{k=1}^{N} \frac{1}{\sqrt{2\pi}} \exp \left(-\frac{(z_N^k(x))^2}{2} \right).
\]
Intuitively, this is another Riemann sum where the spacing $\Delta z_N \rightarrow 0 $ for $p < 1$, but on a moving window. To show convergence, we consider a fixed $R > 0$, and consider the decomposition 
\[
b_N(x) = \Delta z_N \sum_{k=1}^{N} \frac{1}{\sqrt{2\pi}} \exp \left(-\frac{(z_N^k(x))^2}{2} \right) 
\left(\ind_{B_R}(z_N^k(x)) +  \ind_{B_R^c}(z_N^k(x)) \right).
\]
To obtain an upper bound, we note that the Gaussian density $\exp(-z^2/2)$ is largest when $z^2$ is small. This allows us to obtain the bound
\[
\Delta z_N \sum_{k=1}^{N} \frac{1}{\sqrt{2\pi}} \exp \left(-\frac{(z_N^k(x))^2}{2} \right) \ind_{B_R^c}(z_k(x)) 
\leq 
\Delta z_N \sum_{k=0}^{N-1} \frac{2}{\sqrt{2\pi}} \exp \left(-\frac{(R + k \Delta z_N)^2}{2} \right),
\]
where essentially, we have conservatively replaced the equidistant points $z_k$ that are in $B_R^c$ by equidistant points starting at $z=R$, and doubled the sum to cover points extending out in both directions. We then apply another conservative bound, $\exp(-z^2/2) \leq 2 \exp(-|z|)$, so that

\[
\Delta z_N \sum_{k=0}^{N-1} \frac{2}{\sqrt{2\pi}} \exp \left(-\frac{(R + k \Delta z_N)^2}{2} \right)
\leq 
\Delta z_N \sum_{k=0}^{N-1} \frac{4}{\sqrt{2\pi}} \exp(-(R + k \Delta z_N))
\leq \frac{4 e^{-R}}{\sqrt{2\pi}} \frac{\Delta z_N}{1 - e^{-\Delta z_N}},
\]
where the final expression follows from the limit of the geometric series, and converges to $4e^{-R}/\sqrt{2\pi}$ as $N \rightarrow \infty$. We now return to $b_N$, where using the new bound, we have
\[
b_N(x) \leq \Delta z_N \sum_{k=1}^{N} \frac{1}{\sqrt{2\pi}} \exp \left(-\frac{(z_N^k(x))^2}{2} \right) \ind_{B_R}(z_N^k(x)) 
+ \frac{4 e^{-R}}{\sqrt{2\pi}} \frac{\Delta z_N }{1 - e^{-\Delta z_N}}.
\]
On the other hand, we also have the lower bound on $b_N$,
\[
b_N(x) \geq \Delta z_N \sum_{k=1}^{N} \frac{1}{\sqrt{2\pi}} \exp \left(-\frac{(z_N^k(x))^2}{2} \right) \ind_{B_R}(z_N^k(x)).
\]
The limiting behavior of
$
c_N(x) := \Delta z_N \sum_{k=1}^{N} {1}/{\sqrt{2\pi}} \exp \left(-{(z_N^k(x))^2}/{2} \right) \ind_{B_R}(z_N^k(x))
$
as $N \rightarrow \infty$ can be considered in three cases.
\begin{enumerate}
\item
$x \in (-L, L)$. In this case, $z_N^0 \rightarrow -\infty$ and $z_N^N \rightarrow \infty$, meaning for any $B_R$, we have $B_R \subset (z_N^0, z_N^N)$ eventually. The shrinking of the spacing $\Delta z_N$ implies we have a converging Riemann sum, $c_N \rightarrow \nu_0(B_R)$.

\item
$x < -L$ or $x > L$. Here, both $z_N^0, z_N^N < 0$ or $z_N^0, z_N^N > 0$ and so $z_N^0, z_N^N \rightarrow \pm\infty$ as $N \rightarrow \infty$. This implies eventually no points remain in $B_R$, and so $c_N \rightarrow 0$.

\item $x = \pm L$. Either $z_N^0 \rightarrow 0$ and $z_N^N \rightarrow \infty$ or $z_N^0 \rightarrow \infty$ and $z_N^N \rightarrow 0$, so eventually half of $B_R$ is covered by $(z_N^0, z_N^N)$. Thus, $c_N \rightarrow \nu_0(B_R)/2.$
\end{enumerate}
Moreover, we note that 
\[
\frac{\Delta z_N}{1 - e^{-\Delta z_N}} \rightarrow 1 \text{ as } \Delta z_N \rightarrow 0,
\]
which can be verified using L'H\^opital's rule.

For any fixed, $R$, we can now pass to the $\liminf$,
\begin{equation}
\begin{cases} 
\nu_0(B_R) \leq \liminf_{N \rightarrow \infty} b_N(x) \leq \nu_0(B_R)+ {4 e^{-R}}/{\sqrt{2\pi}} & |x|< L , \\
0 \leq \liminf_{N \rightarrow \infty} b_N(x) \leq {4e^{-R}}/{\sqrt{2\pi}} & |x| > L , \\
\end{cases}
\end{equation}
and since $R$ can be made arbitrarily large,
\begin{equation}
\liminf_{N \rightarrow \infty} b_N(x)  = 
\begin{cases} 
1 & |x| < L, \\
0 & |x| > L. \\
\end{cases}
\end{equation}
Analogous arguments can be made for the $\limsup$. Thus, 
\begin{equation}\label{eq:lim_bn}
\lim_{N \rightarrow \infty} b_N(x)  = 
\begin{cases} 
1 & |x| < L, \\
0 & |x| > L. \\
\end{cases}
\end{equation}
This gives an upper bound on the numerator with the limit 
\begin{equation}
u_N(x) = \exp \left(\frac{\sigma_N^2L^2}{2(1-\sigma_N^2)} \right) b_N(x) \rightarrow 
\begin{cases} 
1 & |x| < L, \\
0 & |x| > L, \\
\end{cases}
\end{equation}
as $N \rightarrow \infty$.

We now consider a lower bound. Recall 
\[
a_N(x)
= \frac{2L}{N}\sum_{k=0}^{N} \sqrt{1-\sigma_N^2} \exp\left( \frac{\squared{\sigma_N \mu_N^k}}{2(1-\sigma_N^2)}\right) 
			\frac{1}{s_N \sqrt{2 \pi}} 
			\exp\left( - \frac{(\mu_N^k - (1 - \sigma_N^2)x)^2}{2s_N^2}\right).
\]
We have immediately the lower bound
$a_N(x)
\geq \sqrt{1 - \sigma_N^2} b_N(x) =: l_N(x)$,
which, combined with \eqref{eq:lim_bn} yields
\begin{equation}
\lim_{N \rightarrow \infty} l_{N}(x) =
\begin{cases} 
1 & |x| < L, \\
0 & |x| > L. \\
\end{cases}
\end{equation}
Thus, we have $l_N(x) \leq a_N(x) \leq u_N(x)$, and so by the squeeze theorem
\begin{equation}
\lim_{N \rightarrow \infty} a_{N}(x) =
\begin{cases} 
1 & |x| < L, \\
0 & |x| > L. \\
\end{cases}
\end{equation}
This gives our overall pointwise convergence result \eqref{eq:pointwise_limit_ratio}.

Furthermore, to prove that the ratio of densities has a uniform bound independent of $x$ and $N$, we return to the bound 
$a_N(x) \leq \exp \left(\frac{\sigma_N^2 L^2}{2(1-\sigma_N^2)} \right) b_N(x)$.
In particular, we know 
$\exp\left( \frac{\sigma_N^2 L^2}{2(1-\sigma_N^2)} \right) \rightarrow 1$ as $N \rightarrow \infty$, and hence this term can be arbitrarily close to unity for large $N$.
We can obtain a bound for $b_N$ that is independent of $x$ using similar arguments as before, 
\begin{align*}
b_N(x) &= \Delta z_N \sum_{k=0}^{N} \frac{1}{\sqrt{2\pi}} \exp\left(- \frac{(z_N^k(x))^2}{2} \right) 
\leq \Delta z_N \sum_{k=0}^{N} \frac{2}{\sqrt{2\pi}} \exp\left(- \frac{(k \Delta z_N)^2}{2} \right)\\
&\leq \Delta z_N \sum_{k=0}^{N} \frac{4}{\sqrt{2\pi}} \exp\left(- k \Delta z_N \right)
\leq \frac{4}{\sqrt{2\pi}} \frac{\Delta z_N}{1 - e^{-\Delta z_N}}  
\end{align*}
which converges to $4/\sqrt{2\pi} \approx 1.6 < 2$. Finally, recall that the denominator 
converges to $\nu_0(B_L)$, also independently of $x$.
We can thus use the convergence results to obtain a uniform bound
\begin{equation}
	\frac{\pi_N(x)}{\pi_0(x)} \leq \frac{2}{\nu_0(B_L)} \quad \forall x \in \mathbb{R}, \quad \forall N \geq N^*,
\end{equation}
where $N^*$ may depend on $L$ but not $x$.
\end{proof}

\bibliographystyle{siamplain}
\bibliography{references,additional_references}

\begin{thebibliography}{10}

\bibitem{AlexanderianPetraStadlerEtAl17}
{\sc A.~Alexanderian, N.~Petra, G.~Stadler, and O.~Ghattas}, {\em Mean-variance
  risk-averse optimal control of systems governed by {PDEs} with random
  parameter fields using quadratic approximations}, SIAM/ASA Journal on
  Uncertainty Quantification, 5 (2017), pp.~1166--1192,
  \url{https://doi.org/10.1137/16M106306X}.

\bibitem{AlexanderianPetraStadlerEtAl17a}
{\sc A.~Alexanderian, N.~Petra, G.~Stadler, and O.~Ghattas}, {\em Mean-variance
  risk-averse optimal control of systems governed by {PDEs} with random
  parameter fields using quadratic approximations}, SIAM/ASA Journal on
  Uncertainty Quantification, 5 (2017), pp.~1166--1192.

\bibitem{AlghamdiHesseChenEtAl21}
{\sc A.~Alghamdi, M.~Hesse, J.~Chen, U.~Villa, and O.~Ghattas}, {\em Bayesian
  poroelastic aquifer characterization from {InSAR} surface deformation data.
  {P}art {II}: {Q}uantifying the uncertainty}, Water Resources Research, 57
  (2021), p.~e2021WR029775, \url{https://doi.org/10.1029/2021WR029775}.

\bibitem{AliUllmannHinze17}
{\sc A.~A. Ali, E.~Ullmann, and M.~Hinze}, {\em Multilevel {M}onte {C}arlo
  analysis for optimal control of elliptic {PDEs} with random coefficients},
  SIAM/ASA Journal on Uncertainty Quantification, 5 (2017), pp.~466--492.

\bibitem{ArtznerDelbaenEberEtAl99}
{\sc P.~Artzner, F.~Delbaen, J.-M. Eber, and D.~Heath}, {\em Coherent measures
  of risk}, jul 1999, \url{https://doi.org/10.1111/1467-9965.00068}.

\bibitem{Bacharoglou10}
{\sc A.~Bacharoglou}, {\em Approximation of probability distributions by convex
  mixtures of gaussian measures}, Proceedings of the American Mathematical
  Society, 138 (2010), pp.~2619--2628,
  \url{https://doi.org/10.1090/S0002-9939-10-10340-2}.

\bibitem{Bui-ThanhBursteddeGhattasEtAl12}
{\sc T.~Bui-Thanh, C.~Burstedde, O.~Ghattas, J.~Martin, G.~Stadler, and L.~C.
  Wilcox}, {\em {Extreme-scale UQ for Bayesian inverse problems governed by
  PDEs}}, in SC12: Proceedings of the International Conference for High
  Performance Computing, Networking, Storage and Analysis, 2012.

\bibitem{Bui-ThanhGhattas12a}
{\sc T.~Bui-Thanh and O.~Ghattas}, {\em Analysis of the {H}essian for inverse
  scattering problems. {P}art {I}: Inverse shape scattering of acoustic waves},
  Inverse Problems, 28 (2012), p.~055001,
  \url{https://doi.org/10.1088/0266-5611/28/5/055001}.

\bibitem{Bui-ThanhGhattas12}
{\sc T.~Bui-Thanh and O.~Ghattas}, {\em Analysis of the {H}essian for inverse
  scattering problems. {P}art {II}: Inverse medium scattering of acoustic
  waves}, Inverse Problems, 28 (2012), p.~055002,
  \url{https://doi.org/10.1088/0266-5611/28/5/055002}.

\bibitem{Bui-ThanhGhattas13}
{\sc T.~Bui-Thanh and O.~Ghattas}, {\em Randomized maximum likelihood sampling
  for large-scale {B}ayesian inverse problems}, In preparation,  (2013).

\bibitem{Bui-ThanhGhattasMartinEtAl13}
{\sc T.~Bui-Thanh, O.~Ghattas, J.~Martin, and G.~Stadler}, {\em A computational
  framework for infinite-dimensional {B}ayesian inverse problems {P}art {I}:
  {T}he linearized case, with application to global seismic inversion}, SIAM
  Journal on Scientific Computing, 35 (2013), pp.~A2494--A2523,
  \url{https://doi.org/10.1137/12089586X}.

\bibitem{BungartzGriebel04}
{\sc H.-J. Bungartz and M.~Griebel}, {\em Sparse grids}, Acta Numerica, 13
  (2004), pp.~1--123, \url{https://doi.org/10.1017/S0962492904000182}.

\bibitem{CastrillonCandasNobileTempone21}
{\sc J.~E. Castrill{\'{o}}n-Cand{\'{a}}s, F.~Nobile, and R.~F. Tempone}, {\em A
  hybrid collocation-perturbation approach for {PDEs} with random domains},
  Advances in Computational Mathematics, 47 (2021),
  \url{https://doi.org/10.1007/s10444-021-09859-6}.

\bibitem{ChaudhuriKramerNortonEtAl21}
{\sc A.~Chaudhuri, B.~Kramer, M.~Norton, J.~O. Royset, and K.~Willcox}, {\em
  Certifiable risk-based engineering design optimization}, arXiv preprint
  arXiv:2101.05129,  (2021).

\bibitem{ChenGhattas21}
{\sc P.~Chen and O.~Ghattas}, {\em Taylor approximation for chance constrained
  optimization problems governed by partial differential equations with
  high-dimensional random parameters}, SIAM/ASA Journal on Uncertainty
  Quantification, 9 (2021), pp.~1381--1410.

\bibitem{ChenHabermanGhattas21}
{\sc P.~Chen, M.~Haberman, and O.~Ghattas}, {\em Optimal design of acoustic
  metamaterial cloaks under uncertainty}, Journal of Computational Physics, 431
  (2021), p.~110114.

\bibitem{ChenVillaGhattas19}
{\sc P.~Chen, U.~Villa, and O.~Ghattas}, {\em Taylor approximation and variance
  reduction for {PDE}-constrained optimal control under uncertainty}, Journal
  of Computational Physics, 385 (2019), pp.~163--186,
  \url{https://arxiv.org/abs/1804.04301}.

\bibitem{ChiralaksanakulMahadevan04}
{\sc A.~Chiralaksanakul and S.~Mahadevan}, {\em First-order approximation
  methods in reliability-based design optimization}, Journal of Mechanical
  Design, 127 (2004), pp.~851--857, \url{https://doi.org/10.1115/1.1899691}.

\bibitem{DaonStadler18}
{\sc Y.~Daon and G.~Stadler}, {\em Mitigating the influence of boundary
  conditions on covariance operators derived from elliptic {PDEs}}, Inverse
  Problems and Imaging, 12 (2018), pp.~1083--1102,
  \url{https://doi.org/10.3934/ipi.2018045},
  \url{https://arxiv.org/abs/1610.05280}.

\bibitem{DeMarsBishopJah13}
{\sc K.~J. DeMars, R.~H. Bishop, and M.~K. Jah}, {\em Entropy-based approach
  for uncertainty propagation of nonlinear dynamical systems}, Journal of
  Guidance, Control, and Dynamics, 36 (2013), pp.~1047--1057,
  \url{https://doi.org/10.2514/1.58987}.

\bibitem{FossaArmellinDelandeEtAl22}
{\sc A.~Fossà, R.~Armellin, E.~Delande, M.~Losacco, and F.~Sanfedino}, {\em
  Multifidelity orbit uncertainty propagation using taylor polynomials}, in
  {AIAA} {SCITECH} 2022 Forum, American Institute of Aeronautics and
  Astronautics, Jan. 2022, \url{https://doi.org/10.2514/6.2022-0859}.

\bibitem{GeraciEldredIaccarino17}
{\sc G.~Geraci, M.~S. Eldred, and G.~Iaccarino}, {\em A multifidelity
  multilevel {M}onte {C}arlo method for uncertainty propagation in aerospace
  applications}, in 19th {AIAA} Non-Deterministic Approaches Conference,
  American Institute of Aeronautics and Astronautics, jan 2017,
  \url{https://doi.org/10.2514/6.2017-1951}.

\bibitem{GhattasWillcox21}
{\sc O.~Ghattas and K.~Willcox}, {\em Learning physics-based models from data:
  perspectives from inverse problems and model reduction}, Acta Numerica, 30
  (2021), pp.~445--554, \url{https://doi.org/doi:10.1017/S0962492921000064}.

\bibitem{Giles15}
{\sc M.~B. Giles}, {\em Multilevel monte carlo methods}, Acta Numerica, 24
  (2015), pp.~259--328, \url{https://doi.org/10.1017/s096249291500001x}.

\bibitem{GrahamKuoNuyensEtAl11}
{\sc I.~Graham, F.~Kuo, D.~Nuyens, R.~Scheichl, and I.~Sloan}, {\em Quasi-monte
  carlo methods for elliptic pdes with random coefficients and applications},
  Journal of Computational Physics, 230 (2011), pp.~3668--3694,
  \url{https://doi.org/https://doi.org/10.1016/j.jcp.2011.01.023},
  \url{https://www.sciencedirect.com/science/article/pii/S0021999111000489}.

\bibitem{HongHuLiu14}
{\sc L.~J. Hong, Z.~Hu, and G.~Liu}, {\em Monte carlo methods for value-at-risk
  and conditional value-at-risk}, {ACM} Transactions on Modeling and Computer
  Simulation, 24 (2014), pp.~1--37, \url{https://doi.org/10.1145/2661631}.

\bibitem{IsaacPetraStadlerEtAl15a}
{\sc T.~Isaac, N.~Petra, G.~Stadler, and O.~Ghattas}, {\em Scalable and
  efficient algorithms for the propagation of uncertainty from data through
  inference to prediction for large-scale problems, with application to flow of
  the {A}ntarctic ice sheet}, Journal of Computational Physics, 296 (2015),
  pp.~348--368, \url{https://doi.org/10.1016/j.jcp.2015.04.047}.
\newblock Winner, SIAM SIAG Computational Science \& Engineering Best Paper
  Award for papers published in 2015--2018.

\bibitem{LawStuartZygalakis15}
{\sc K.~Z. K.~Law, A.~Stuart}, {\em Data Assimilation: A Mathematical
  Introduction}, Springer Texts in Applied Mathematics, 2015.

\bibitem{KalmikovHeimbach14}
{\sc A.~G. Kalmikov and P.~Heimbach}, {\em A {H}essian-based method for
  uncertainty quantification in global ocean state estimation}, SIAM Journal on
  Scientific Computing, 36 (2014), pp.~S267--S295.

\bibitem{KiureghianLinHwang87}
{\sc A.~D. Kiureghian, H.~Lin, and S.~Hwang}, {\em Second‐order reliability
  approximations}, Journal of Engineering Mechanics, 113 (1987),
  pp.~1208--1225,
  \url{https://doi.org/10.1061/(ASCE)0733-9399(1987)113:8(1208)},
  \url{https://ascelibrary.org/doi/abs/10.1061/%28ASCE%290733-9399%281987%29113%3A8%281208%29},
  \url{https://arxiv.org/abs/https://ascelibrary.org/doi/pdf/10.1061/%28ASCE%290733-9399%281987%29113%3A8%281208%29}.

\bibitem{KouriSurowiec16}
{\sc D.~P. Kouri and T.~M. Surowiec}, {\em Risk-averse {PDE}-constrained
  optimization using the conditional value-at-risk}, SIAM Journal on
  Optimization, 26 (2016), pp.~365--396,
  \url{https://doi.org/10.1137/140954556}.

\bibitem{LindgrenRueLindstroem11}
{\sc F.~Lindgren, H.~Rue, and J.~Lindstr{\"o}m}, {\em An explicit link between
  {G}aussian fields and {G}aussian {M}arkov random fields: the stochastic
  partial differential equation approach}, Journal of the Royal Statistical
  Society: Series B (Statistical Methodology), 73 (2011), pp.~423--498,
  \url{https://doi.org/10.1111/j.1467-9868.2011.00777.x},
  \url{http://dx.doi.org/10.1111/j.1467-9868.2011.00777.x}.

\bibitem{MartinWilcoxBursteddeEtAl12}
{\sc J.~Martin, L.~C. Wilcox, C.~Burstedde, and O.~Ghattas}, {\em A stochastic
  {Newton MCMC} method for large-scale statistical inverse problems with
  application to seismic inversion}, SIAM Journal on Scientific Computing, 34
  (2012), pp.~A1460--A1487, \url{https://doi.org/10.1137/110845598}.

\bibitem{MelchersBeck18}
{\sc R.~E. Melchers and A.~T. Beck}, {\em Structural reliability analysis and
  prediction}, John wiley \& sons, third~ed., 2018,
  \url{https://doi.org/10.1002/9781119266105}.

\bibitem{NgWillcox14}
{\sc L.~Ng and K.~Willcox}, {\em Multifidelity approaches for optimization
  under uncertainty}, International Journal for Numerical Methods in
  Engineering, 100 (2014), pp.~746--772,
  \url{https://doi.org/10.1002/nme.4761}.

\bibitem{NobileTemponeWebster08a}
{\sc F.~Nobile, R.~Tempone, and C.~Webster}, {\em A sparse grid stochastic
  collocation method for partial differential equations with random input
  data}, SIAM Journal on Numerical Analysis, 46 (2008), pp.~2309--2345,
  \url{https://doi.org/10.1137/060663660}.

\bibitem{PeherstorferWillcoxGunzburger16}
{\sc B.~Peherstorfer, K.~Willcox, and M.~Gunzburger}, {\em Optimal model
  management for multifidelity monte carlo estimation}, {SIAM} Journal on
  Scientific Computing, 38 (2016), pp.~A3163--A3194,
  \url{https://doi.org/10.1137/15M1046472}.

\bibitem{Rackwitz01}
{\sc R.~Rackwitz}, {\em Reliability analysis{\textemdash}a review and some
  perspectives}, Structural Safety, 23 (2001), pp.~365--395,
  \url{https://doi.org/10.1016/s0167-4730(02)00009-7}.

\bibitem{RockafellarUryasev00}
{\sc R.~T. Rockafellar and S.~Uryasev}, {\em Optimization of conditional
  value-at-risk}, Journal of risk, 2 (2000), pp.~21--42.

\bibitem{SaibabaLeeKitanidis15}
{\sc A.~K. Saibaba, J.~Lee, and P.~K. Kitanidis}, {\em Randomized algorithms
  for generalized hermitian eigenvalue problems with application to computing
  karhunen{\textendash}lo{\`{e}}ve expansion}, Numerical Linear Algebra with
  Applications, 23 (2015), pp.~314--339,
  \url{https://doi.org/10.1002/nla.2026}.

\bibitem{ShapiroDentchevaRuszczynski09}
{\sc A.~Shapiro, D.~Dentcheva, and A.~Ruszczynski}, {\em Lectures on Stochastic
  Programming: Modeling and Theory}, Society for Industrial and Applied
  Mathematics, 2009.

\bibitem{TuChoiPark99}
{\sc J.~Tu, K.~K. Choi, and Y.~H. Park}, {\em {A New Study on Reliability-Based
  Design Optimization}}, Journal of Mechanical Design, 121 (1999),
  pp.~557--564, \url{https://doi.org/10.1115/1.2829499},
  \url{https://doi.org/10.1115/1.2829499},
  \url{https://arxiv.org/abs/https://asmedigitalcollection.asme.org/mechanicaldesign/article-pdf/121/4/557/5920868/557\_1.pdf}.

\bibitem{VittaldevRussell16}
{\sc V.~Vittaldev and R.~P. Russell}, {\em Multidirectional gaussian mixture
  models for nonlinear uncertainty propagation}, Computer Modeling in
  Engineering \& Sciences, 111 (2016), pp.~83--117,
  \url{https://doi.org/10.3970/cmes.2016.111.083}.

\bibitem{VittaldevRussellLinares16}
{\sc V.~Vittaldev, R.~P. Russell, and R.~Linares}, {\em Spacecraft uncertainty
  propagation using gaussian mixture models and polynomial chaos expansions},
  Journal of Guidance, Control, and Dynamics, 39 (2016), pp.~2615--2626,
  \url{https://doi.org/10.2514/1.g001571}.

\bibitem{YangStadlerMoserEtAl11}
{\sc S.~Yang, G.~Stadler, R.~Moser, and O.~Ghattas}, {\em A shape
  {H}essian-based boundary roughness analysis of {N}avier--{S}tokes flow}, SIAM
  Journal on Applied Mathematics, 71 (2011), pp.~333--355,
  \url{https://doi.org/10.1137/100796789},
  \url{https://doi.org/10.1137/100796789}.

\bibitem{YounChoi04}
{\sc B.~D. Youn and K.~K. Choi}, {\em Selecting probabilistic approaches for
  reliability-based design optimization}, {AIAA} Journal, 42 (2004-01),
  pp.~124--131, \url{https://doi.org/10.2514/1.9036}.

\end{thebibliography}

\end{document}